\newcommand{\nc}{\newcommand}
\numberwithin{equation}{section}
\newtheorem{thm}{Theorem}[section]
\newtheorem{prop}[thm]{Proposition}
\newtheorem{lem}[thm]{Lemma}
\newtheorem{cor}[thm]{Corollary}
\newtheorem{rem}[thm]{Remark}
\newtheorem{example}[thm]{Example}
\newtheorem{dfn}[thm]{Definition}
\newtheorem*{thma}{Theorem A}
\newtheorem*{thmb}{Theorem B}
\newtheorem*{cor*}{Corollary}
\nc{\cA}{\mathcal{A}}
\nc{\gl}{\mathfrak{gl}}
\nc{\GL}{\mathfrak{GL}}
\nc{\g}{\mathfrak{g}}
\nc{\gh}{\widehat\g}
\nc{\h}{\mathfrak{h}}
\nc{\la}{\lambda}
\nc{\al}{\alpha }
\nc{\be}{\beta }
\nc{\ve}{\varepsilon }
\nc{\om}{\omega }
\nc{\ch}{{\mathop {\rm ch}}}
\nc{\Tr}{{\mathop {\rm Tr}\,}}
\nc{\Id}{{\mathop {\rm Id}}}
\nc{\ad}{{\mathop {\rm ad}}}
\nc{\bra}{\langle}
\nc{\ket}{\rangle}
\nc{\pa}{\partial}
\nc{\ld}{\ldots}
\nc{\cd}{\cdots}
\nc{\hk}{\hookrightarrow}
\nc{\T}{\otimes}
\nc{\gr}{\mathrm{gr}}
\nc{\ov}{\overline}
\nc{\cO}{\mathcal O}
\nc{\cK}{\mathcal K}
\nc{\cP}{\mathcal P}
\nc{\cL}{\mathcal L}
\nc{\cF}{\mathcal F}
\nc{\cW}{\mathcal W}
\nc{\cD}{\mathcal D}
\nc{\msl}{\mathfrak{sl}}
\nc{\mgl}{\mathfrak{gl}}
\nc{\fQ}{\mathfrak{Q}}
\nc{\U}{\mathrm U}
\nc{\Spec}{\operatorname{Spec}}
\nc{\Proj}{\operatorname{Proj}}
\newcommand{\bC}{{\mathbb C}}
\newcommand{\bZ}{{\mathbb Z}}
\newcommand{\bN}{{\mathbb N}}
\newcommand{\bA}{{\mathbb A}}
\newcommand{\bP}{{\mathbb P}}
\newcommand{\fh}{{\mathfrak h}}
\newcommand{\fg}{{\mathfrak g}}
\newcommand{\fgh}{{\widehat{\mathfrak g}}}
\newcommand{\fb}{{\mathfrak b}}
\newcommand{\fn}{{\mathfrak n}}
\newcommand{\bc}{{\bf c}}
\newcommand{\bd}{{\bf d}}
\newcommand{\Gr}{{\rm Gr}}
\nc{\Q}{\mathfrak Q}
\newcommand{\bW}{{\mathbb{W}}}
\newcommand{\uG}{{\mathbb G}}
\newcommand{\ula}{{\underline{\la}}}
\newcommand{\umu}{{\underline{\mu}}}
\nc{\bD}{\mathbb{D}}
\nc{\on}{\operatorname} \nc\ol{\overline} \nc\ul{\underline}
\nc{\BA}{{\mathbb{A}}} \nc{\BC}{{\mathbb{C}}} \nc{\BF}{{\mathbb{F}}}
\nc{\BD}{{\mathbb{D}}} \nc{\BG}{{\mathbb{G}}} \nc{\BQ}{{\mathbb{Q}}}
\nc{\BM}{{\mathbb{M}}} \nc{\BN}{{\mathbb{N}}} \nc{\BO}{{\mathbb{O}}}
\nc{\BP}{{\mathbb{P}}} \nc{\BR}{{\mathbb{R}}}
\nc{\BZ}{{\mathbb{Z}}} \nc{\BS}{{\mathbb{S}}} \nc{\BW}{{\mathbb{W}}}
\nc{\CA}{{\mathcal{A}}} \nc{\CL}{{\mathcal{L}}} \nc{\CV}{{\mathcal{V}}} \nc{\CW}{{\mathcal{W}}}
\nc{\CalD}{{\mathcal{D}}}
\nc{\sic}{{\on{sc}}}
\nc{\add}{{\on{add}}}
\nc{\svee}{{\!\scriptscriptstyle\vee}}
\newcommand{\oA}{\vphantom{j^{X^2}}\smash{\overset{\circ}{\vphantom{\rule{0pt}{0.55em}}\smash{\BA}}}}
\begin{document}
	
	\title[Beilinson-Drinfeld Schubert varieties]
	{Beilinson-Drinfeld Schubert varieties and Global Demazure modules}
	
	\author{Ilya Dumanski}
	\address{Ilya Dumanski:\newline
		Department of Mathematics, National Research University Higher School of Economics, Russian Federation,
		Usacheva str. 6, 119048, Moscow, \newline
		{\it and }\newline
		Independent University of Moscow, 119002, Bolshoy Vlasyevskiy Pereulok 11, Russia, Moscow.
	}
	\email{ilyadumnsk@gmail.com}

	\author{Evgeny Feigin}
	\address{Evgeny Feigin:\newline
		Department of Mathematics, National Research University Higher School of Economics, Russian Federation,
		Usacheva str. 6, 119048, Moscow,\newline
		{\it and }\newline
		Skolkovo Institute of Science and Technology, Skolkovo Innovation Center, Building 3,
		Moscow 143026, Russia.
	}
	\email{evgfeig@gmail.com}
	
	\author{Michael Finkelberg}
	\address{Michael Finkelberg:\newline
		Department of Mathematics, National Research University Higher School of Economics, Russian Federation,
		Usacheva str. 6, 119048, Moscow,\newline
		Skolkovo Institute of Science and Technology, Skolkovo Innovation Center, Building 3,
		Moscow 143026, Russia,
		{\it and }\newline
		Institute for Information Transmission Problems of RAS, Moscow, Russia.
	}
	\email{fnklberg@gmail.com}
	
	\begin{abstract}
		We compute the spaces of sections of powers of the determinant line bundle on the spherical
		Schubert subvarieties of the Beilinson-Drinfeld affine Grassmannians. The answer is given in terms of global Demazure modules over the current Lie algebra.
	\end{abstract}

	\maketitle
	
	\section*{Introduction}
	Let $\fg$ be a simple complex Lie algebra. To simplify the notation, in the introduction we assume that $\fg$ is simply-laced. 
	We drop this restriction in the main body of the paper.
	
	The central objects of the algebraic representation theory of $\fg$
	are finite-dimensional irreducible representations $V_\la$ of $\fg$ labeled by the dominant integral weights $\la\in P_+$.
	The geometric objects responsible for these representations are the flag varieties. In particular, 
	flag varieties are naturally embedded into projectivizations of irreducible $\fg$-modules and  
	the celebrated  Borel-Weil theorem states that finite-dimensional 
	$\fg$ modules are realized as (dual) spaces of sections of line bundles on flag varieties (see e.g. \cite{Fu,Kum}).
	These properties are still valid after passing to the Demazure submodules inside $V_\la$ and to the Schubert subvarieties in flag varieties.      
	
	In this paper we are interested in the representation theory (algebraic and geometric) of two natural infinite-dimensional analogues 
	of the Lie algebra $\fg$ -- the current algebra $\fg[t]=\fg\T\bC[t]$ and the (untwisted) affine Kac-Moody Lie algebra $\gh$ with the
	natural embedding $\fg[t]\subset \gh$. The $\gh$-analogues of the $\fg$-modules $V_\la$ are (infinite-dimensional) integrable highest 
	weight representations $L(\Lambda)$ (see \cite{Kac}). The central element of $\gh$ acts on $L(\Lambda)$ by a constant called the level of representation. In particular, there are finitely many level one integrable modules $L(\Lambda_i)$, $i=0,\dots,m$, where $L(\Lambda_0)$
	is the basic representation. In this paper we
	will only consider modules $L(\ell\Lambda_i)$ for $\ell\in\bZ_{>0}$. The projectivization $\bP(L(\Lambda_i))$
	contains a partial affine flag variety $\Gr(\Lambda_i)$ as the closure of the orbit of the highest weight line with respect to the action
	of the affine Kac-Moody group (\cite{Kum}). The disjoint union $\sqcup_{i=0}^m \Gr(\Lambda_i)$ is isomorphic to the affine 
	Grassmannian $\Gr$ for the adjoint Lie group of the Lie algebra $\fg$. 
	
	The Demazure submodules in integrable representations $L(\Lambda)$ are labeled by the elements of the extended affine Weyl group.
	We will only consider the $\fg[t]$-invariant Demazure modules inside $L(\ell\Lambda_i)$ (note that in general a Demazure module
	is only acted upon by the Iwahori subalgebra that is strictly contained in the current algebra). In particular, the $\fg[t]$-invariant
	Demazure modules $D_{1,\la}$ inside the level one integrable representations are labeled by the dominant integral weights $\la$. We denote
	by $D_{\ell,\la}$, $\ell\ge 1$ the level $\ell$ affine Demazure modules contained in the $\ell$-th tensor power of level one module $D_{1,\la}$.
	The projectivized Demazure module $\bP(D_{1,\la})$ contains the spherical Schubert variety $\ol\Gr{}^\la$ as the orbit closure of 
	the current group action.
	Thanks to the embedding $\ol\Gr{}^\la\subset \bP(D_{1,\la})$ the Schubert varieties are equipped with ample line bundle $\cL$, such that
	the dual space of sections of  $\cL^{\T\ell}$ is isomorphic to $D_{\ell,\la}$ for any $\ell$. The line bundle $\cL$ on a Schubert variety can be also obtained
	as the restriction of the determinant line bundle on the affine Grassmannian (see \cite{Kum,Z2}). 
	
	The current algebra $\fg[t]$ possesses a remarkable family of cyclic finite-dimensional modules $W_\la$ called the local Weyl modules
	(see \cite{CL,CP,FL2,KN,Naoi}). In particular, as $\fg$-module, $W_\la$ is isomorphic to the tensor product of fundamental local Weyl modules,
	where the number of factors of the form $W_{\om}$ is exactly the coefficient of $\om$ in the decomposition of $\la$. 
	We note that in the simply-laced case one has an isomorphism $W_\la\simeq D_{1,\la}$.
	The global 
	Weyl modules $\bW_\la$ are infinite-dimensional cyclic representations of $\fg[t]$ (see \cite{BF1,CFK,CI,FeMa1, Kato1}). One of the most important properties of the global Weyl modules is the existence of free action of the commutative highest weight algebra $\cA_\la$,
	commuting with the $\fg[t]$-action. In particular, one obtains a family of (finite-dimensional) $\fg[t]$-modules, labeled by the closed points in $\Spec(\cA_\la)$, obtained as fibers of $\bW_\la$ with respect to $\cA_\la$; the local Weyl module is the fiber at the origin. 
	
	A generalization of this picture was suggested in \cite{DF}. The authors introduced a family of
	cyclic (infinite-dimensional)
	global Demazure modules $\bD(\ell,\ula)$ (that were denoted $R(D_{\ell,\la_1},\dots,D_{\ell,\la_k})$
	in~\cite{DF}) corresponding to a collection 
	of dominant integral nonzero weights $\ula\in P_+^k$ and an  integer $\ell>0$;
	in particular, if all $\la_i$ are fundamental and $\ell=1$,
	then one gets back the global Weyl module 
	(this is no longer true in the non simply-laced case).
	The global Demazure modules arise naturally in connection with the study of the projective arc spaces (see \cite{Mu1,Mu2,Nash}).
	The modules $\bD(\ell,\ula)$ are acted upon by a commutative (highest weight) algebra $\cA(\ula)=\cA(\la_1,\dots,\la_k)$ whose action commutes 
	with the $\fg[t]$ action (see \cite{BCES,EGL,KMSV,SV} for the examples of similar algebras). 
	The spectrum $\Spec(\cA(\ula))$ is
	the closure of a stratum of the diagonal stratification of a colored configuration space of the
	affine line
	(see section \ref{high wei alg} for precise definitions). In particular, a closed point
	$\bc\in \bA^k$ defines the same named closed point in $\Spec(\cA(\ula))$.
	For a point $\bc\in\Spec(\cA(\ula))$ we denote by $\bD(\ell,\ula)_\bc$ the fiber of the global Demazure module at $\bc$. 
	Our first theorem is as follows:
	\begin{thma}
	  \textup{(a)} Assume that $\lambda_i\ne0$ for all $i$.
          One has an isomorphism of $\fg[t]$-modules
		\[
		\bD(\ell, \ula)_0 \simeq D_{\ell,\la_1+\dots+\la_k}.
		\]
		\textup{(b)} Let $\ula\in P_+^k$, $\underline\mu\in P_+^l$. If $\bc\in\bA^{k}$ and $\bd\in \bA^l$ have no common entries, 
		then the following factorization property (an isomorphism of $\fg[t]$-modules) holds:
		\[
		\bD(\ell, \ula\sqcup\umu)_{(\bc,\bd)}\simeq \bD(\ell, \ula)_{\bc}\T \bD(\ell, \umu)_{\bd}.
		\]
		\textup{(c)} The global Demazure module $\bD(\ell, \ula)$ is free over $\cA(\ula)$.\\
		\textup{(d)} The direct sum of $\cA(\ula)$-dual modules $\bigoplus_{\ell\ge 0} \bD(\ell,\ula)^\vee$ carries a natural structure of 
		$\cA(\ula)$-algebra. 
	\end{thma}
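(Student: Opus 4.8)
The plan is to produce the multiplication as the $\cA(\ula)$-linear dual of a system of comultiplication maps on the global Demazure modules themselves; conceptually this realizes $\bigoplus_{\ell\ge 0}\bD(\ell,\ula)^\vee$ as the relative homogeneous coordinate ring of the determinant bundle on the Beilinson--Drinfeld Schubert variety, and the construction is the algebraic shadow of the multiplication of sections. Write $v_\ell$ for the cyclic generator of $\bD(\ell,\ula)$. For each pair $\ell_1,\ell_2\ge 0$ I would construct a morphism of $\fg[t]$- and $\cA(\ula)$-modules
\[
\iota_{\ell_1,\ell_2}\colon \bD(\ell_1+\ell_2,\ula)\longrightarrow \bD(\ell_1,\ula)\T_{\cA(\ula)}\bD(\ell_2,\ula),\qquad v_{\ell_1+\ell_2}\longmapsto v_{\ell_1}\T v_{\ell_2}.
\]
Since the source is cyclic, $\iota_{\ell_1,\ell_2}$ is unique once it is shown to be well defined, i.e.\ once $v_{\ell_1}\T v_{\ell_2}$ is checked to satisfy the defining relations of $\bD(\ell_1+\ell_2,\ula)$. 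Dualizing over $\cA(\ula)$ and composing with the canonical map $\bD(\ell_1,\ula)^\vee\T_{\cA(\ula)}\bD(\ell_2,\ula)^\vee\to\bigl(\bD(\ell_1,\ula)\T_{\cA(\ula)}\bD(\ell_2,\ula)\bigr)^\vee$ yields the multiplication
\[
\bD(\ell_1,\ula)^\vee\T_{\cA(\ula)}\bD(\ell_2,\ula)^\vee\longrightarrow \bD(\ell_1+\ell_2,\ula)^\vee,
\]
the unit being supplied by the identification $\bD(0,\ula)=\cA(\ula)$. The freeness statement (c) ensures that the restricted duals and their tensor products are again free over $\cA(\ula)$, so these operations behave as expected.

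The crux is the well-definedness of $\iota_{\ell_1,\ell_2}$, which I would establish fiberwise using (a), (b) and (c). By (c) the target is free over the (reduced) ring $\cA(\ula)$, so an element vanishes as soon as it does in the fibers over a Zariski-dense set of closed points; it thus suffices to check that $v_{\ell_1}\T v_{\ell_2}$ satisfies the relations of $\bD(\ell_1+\ell_2,\ula)$ after specialization to a dense family of points. Over the dense locus of tuples $\bc\in\bA^k$ with pairwise distinct coordinates, iterating the factorization (b) and applying the single-point case of (a) (the single-point fibers all being isomorphic to the corresponding local module) identifies the fiber of $\bD(\ell,\ula)$ with the tensor product $\bigotimes_i D_{\ell,\la_i}$ of local affine Demazure modules. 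Under this identification the fiber of $\iota_{\ell_1,\ell_2}$ is the tensor product of the classical Cartan embeddings $D_{\ell_1+\ell_2,\la_i}\hookrightarrow D_{\ell_1,\la_i}\T D_{\ell_2,\la_i}$, a standard fact about local affine Demazure modules. Hence the relations hold on a dense set of fibers, and freeness promotes this to a genuine $\cA(\ula)$-module map.

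It then remains to check that the multiplication is associative, commutative and unital, and each such identity dualizes to an identity between two $\fg[t]$-module maps out of a cyclic global Demazure module, which coincide as soon as they agree on the cyclic generator. For coassociativity the two composites $\bD(\ell_1+\ell_2+\ell_3,\ula)\to\bD(\ell_1,\ula)\T_{\cA(\ula)}\bD(\ell_2,\ula)\T_{\cA(\ula)}\bD(\ell_3,\ula)$ both send the generator to $v_{\ell_1}\T v_{\ell_2}\T v_{\ell_3}$; cocommutativity is the symmetry of this assignment under the swap of factors; and counitality is the statement that $\iota_{\ell,0}$ is the canonical identification $\bD(\ell,\ula)=\bD(\ell,\ula)\T_{\cA(\ula)}\cA(\ula)$. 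Dualizing gives associativity, commutativity and the unit for the graded $\cA(\ula)$-algebra $\bigoplus_{\ell\ge 0}\bD(\ell,\ula)^\vee$.

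I expect the genuine obstacle to be precisely the well-definedness of $\iota_{\ell_1,\ell_2}$: one must control the defining relations of $\bD(\ell_1+\ell_2,\ula)$ well enough to see that they are implied by those of the two tensor factors. This requires a presentation of the global Demazure modules whose relation ideal is compatible with the $\cA(\ula)$-action and can be read off on the dense locus of factored fibers. Granting (c), this amounts to tracking relations through the specialization maps, but the compatibility of the relation ideals with restriction to fibers is the step that must be handled with care.
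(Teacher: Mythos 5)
Your proposal addresses only part (d) of the theorem: the construction of the comultiplication $\iota_{\ell_1,\ell_2}$ and hence of the algebra structure. Parts (a), (b), (c) are not proved but are invoked as inputs (``I would establish fiberwise using (a), (b) and (c)''), and they are the substantive content of the statement. In the paper, (a) and (c) are Proposition \ref{global demazure is projective}, whose proof requires real work: first the identification $\bD(\ell,\ula)\simeq \bD(1,\ula)^{\odot\ell}_{\cA(\ula)}$ of Proposition \ref{tensor product over A} (via the explicit isomorphism $M[t]\T_{\U(\h[t])}M[t]\simeq (M\T M)[t]$), which reduces (a) to the level-one case; then Lemma \ref{fiber of level 1 demazure}, which establishes $\bD(1,\ula)\T_{\cA(\ula)}\bC_0\simeq D_{1,\la}$ by checking the Fourier--Littelmann--Joseph defining relations $(f_\beta t^s)^{k_\beta+1}.v=0$ of $D_{1,\la}$ through a reduction to the $\msl_2^\beta$-subalgebras; and finally constancy of fiber dimensions plus the graded Nakayama lemma to get freeness. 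Part (b) likewise rests on the surjection \eqref{fiber surjects to fusion} and a dimension count coming from (a). None of this is supplied or even sketched in your proposal, so as a proof of Theorem A it has a genuine gap rather than a stylistic difference.

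Granting (a)--(c), your argument for (d) is essentially sound: freeness plus irreducibility of $\bA^{(\ula)}$ lets you test the vanishing of $u.(v_{\ell_1}\T v_{\ell_2})$ on the dense locus of distinct coordinates, where factorization reduces everything to the classical inclusion $D_{\ell_1+\ell_2,\la_i}\hookrightarrow D_{\ell_1,\la_i}\T D_{\ell_2,\la_i}$ of cyclic $\U(\fb^a)$-spans inside $L(\Lambda^\vee)^{\T(\ell_1+\ell_2)}$. This is a legitimate alternative to the paper's route, which avoids any control of defining relations: Proposition \ref{tensor product over A} realizes $\bD(\ell,\ula)$ as the $\U(\fg[t])$-span of $v^{\T\ell}$ inside the $\ell$-th $\cA(\ula)$-tensor power of $\bD(1,\ula)$, so the inclusion $\bD(\ell_1+\ell_2,\ula)\hookrightarrow\bD(\ell_1,\ula)\T_{\cA(\ula)}\bD(\ell_2,\ula)$ is built in by construction (and, importantly, is available \emph{before} freeness is known, since it is also the tool that proves (a) and (c) for $\ell>1$). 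Your final worry about ``tracking relations through the specialization maps'' is exactly the difficulty the paper's cyclic-power construction is designed to circumvent; if you want a self-contained proof you should either supply a presentation of $\bD(\ell,\ula)$ compatible with base change, or adopt the cyclic-power identification as your starting point.
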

	
	The properties of the global Demazure modules collected in Theorem A are parallel to the
	properties of the Beilinson-Drinfeld spherical
	Schubert varieties over the affine line.
	The main goal of this paper is to describe this relation explicitly.
	More precisely, we:
	\begin{itemize}   
		\item identify the projective spectrum of the algebra $\bigoplus_{\ell\ge 0} \bD(\ell,\ula)^\vee$
		with the partially symmetrized BD spherical Schubert varieties;
		\item embed symmetrized BD spherical Schubert varieties into the projectivization of the  vector bundle $\cD(\ell, \ula)$ obtained as the localization of the (free) 
		$\cA(\ula)$-module $\bD(\ell, \ula)$;
		\item identify the dual sections of the determinant line bundle on symmetrized BD spherical Schubert
		varieties with global Demazure modules. 
	\end{itemize}
	
	Let us state our results in more detail. 
	Recall that the Beilinson-Drinfeld Grassmannians (BD Grassmannians for short) are global versions of
	the affine Grassmannians~\cite{BD1,BD2,FBZ,Z2} defined over the powers of an algebraic curve $X$; 
	in this paper we only consider
	the case $X=\bA^1$ and denote the corresponding BD Grassmannians by $\Gr_{\bA^k}$ 
	(see e.g. \cite{BKK,CK,CW,Kam,MVy} for various applications in geometric representation theory).
	The Grassmannians $\Gr_{\bA^k}$ are ind-varieties over the configuration space $\bA^k$,
	and the ind-structure is provided by the BD spherical Schubert varieties  $\ol\Gr{}^{\ula}$,
	labeled by $k$-tuples of dominant coweights $\ula=(\la_1,\dots,\la_k)\in P_+^k$. A group scheme $\uG(k)$ over $\bA^k$
	(the global analogue of the current group $G(\bC[\![t]\!])$) acts on $\Gr_{\bA^k}$ fiberwise,
	and the BD spherical Schubert varieties are the closures of orbits
	of $\uG(k)$ in the generic fiber of $\Gr_{\bA^k}$
	(we note that the same group scheme acts on $\bP(\cD(\ell, \ula))$).
	The fibers of the projection 
	$\ol\Gr{}^{\ula}\to\bA^k$ are products of the spherical Schubert 
	subvarieties of the affine Grassmannian (this is a manifestation of the crucial factorization
	property of the BD Grassmannians).
	
	BD Grassmannians carry the ample determinant line bundle $\cL$; we keep the same notation for the restriction of this line bundle to the BD spherical Schubert
	varieties. 
	The space of sections $H^0(\ol\Gr{}^{\ula},\cL^{\T\ell})$ 
	is naturally a  $\fg[t]-\bC[\bA^k]$-bimodule (we note that the higher cohomology
	$H^{>0}(\ol\Gr{}^{\ula},\cL^{\T\ell})$ vanish). However,
	as a module over the current algebra it is not cyclic and hence hard to describe. In order to resolve this problem we consider
	a partially symmetrized version $\ol\Gr{}^{(\ula)}$ of the BD spherical Schubert varieties
	(see~Section \ref{bdg} for precise definition).
	The variety $\ol\Gr{}^{(\ula)}$ is equipped with a natural projection onto $\Spec(\cA(\ula))$ and the determinant line bundle descends 
	to the symmetrized BD spherical Schubert varieties. We prove the following theorem.
	\begin{thmb}
		For all $\ell\ge 1$ and $\ula = (\la_1, \hdots, \la_k) \in P_+^k$, such that all $\la_i$ are nonzero, one has:\\
		\textup{(a)} a $\uG(k)$-equivariant embedding:
		\[
		\ol\Gr{}^{(\ula)}\subset \bP(\cD(\ell, \ula));
		\]
		\textup{(b)} an isomorphism of $\Spec(\cA(\ula))$-schemes:
		\[
		\ol\Gr{}^{(\ula)}\simeq \Proj(\bigoplus_{\ell\ge 0} \bD(\ell,\ula)^\vee);
		\]
		\textup{(c)}
		an isomorphism of $\fg[t]-\cA(\ula)$-bimodules:
		\[
		H^0(\Gr^{(\ula)},\cL^{\T\ell}) \simeq \bD(\ell,\ula)^\vee;
		\]
		\textup{(d)}
		an isomorphism of $\fg[t]-\bC[\bA^k]$-bimodules:
		\[
		H^0(\Gr^{\ula},\cL^{\T\ell}) \simeq \bD(\ell,\ula)^\vee \T_{\cA(\ula)}\bC[\bA^k],
		\]
		where $M^\vee$ stands for the $\cA(\ula)$-dual to an $\cA(\ula)$-module $M$.
		
	\end{thmb}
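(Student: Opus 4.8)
The plan is to establish part (c) as the core cohomological computation, to deduce (a) and (b) from the (relative) ampleness of the determinant bundle, and to obtain (d) by flat base change. Throughout write $S=\Spec\cA(\ula)$ and let $\pi\colon\ol\Gr{}^{(\ula)}\to S$ denote the structure projection.

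The first step is to show that $\pi$ is proper and flat and that $R^{>0}\pi_*\cL^{\T\ell}=0$. Properness is inherited from the ind-properness of the BD Grassmannian. For flatness I would invoke miracle flatness: $S$ is smooth (the relevant highest weight algebra is a polynomial ring, a product of symmetric powers of $\bA^1$), the BD Schubert varieties are Cohen--Macaulay, and the fibers are equidimensional because the rank of $\bD(\ell,\ula)$ over $\cA(\ula)$ is constant by Theorem A(c). The vanishing of higher direct images is the relative form of the cohomology vanishing of $\cL^{\T\ell}$ on a single spherical Schubert variety (the affine Borel--Weil--Demazure theorem, via \cite{Kum,Z2}), applied fiberwise. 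Grauert's cohomology-and-base-change theorem then shows that $\pi_*\cL^{\T\ell}$ is a vector bundle on $S$ commuting with base change, with fiber $H^0(\pi^{-1}(\bc),\cL^{\T\ell})$ at each $\bc\in S$.

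The heart of the proof, and the step I expect to be the main obstacle, is to identify $\pi_*\cL^{\T\ell}$ with $\bD(\ell,\ula)^\vee$ as a $\fg[t]$-$\cA(\ula)$-bimodule, which is exactly (c). Over the open dense locus $U\subset S$ of pairwise distinct points the fiber $\pi^{-1}(\bc)$ is a product $\prod_i\ol\Gr{}^{\la_i}$, so the Künneth formula and the classical identification $H^0(\ol\Gr{}^{\la_i},\cL^{\T\ell})^\vee\simeq D_{\ell,\la_i}$, together with the factorization property of Theorem A(b), yield a canonical $\fg[t]$-equivariant isomorphism $\pi_*\cL^{\T\ell}|_U\xrightarrow{\ \sim\ }\bD(\ell,\ula)^\vee|_U$. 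Both sheaves are locally free on all of $S$ (the target because $\bD(\ell,\ula)$ is free by Theorem A(c)) and $S$ is smooth, so this map extends uniquely to a bimodule homomorphism over $S$. It remains to check that the extension stays an isomorphism along the boundary strata, and here I would argue by the compatibility of the geometric factorization of $\ol\Gr{}^{(\ula)}$ with the algebraic factorization of Theorem A(b): a partial collision of colours reduces the check to smaller total-collision loci, where Theorem A(a) identifies the fiber of $\bD(\ell,\ula)$ with $D_{\ell,\la_1+\dots+\la_k}$ and matches it with $H^0(\ol\Gr{}^{\la_1+\dots+\la_k},\cL^{\T\ell})^\vee$. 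Induction on the stratification then gives bijectivity everywhere, proving (c).

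Finally I would deduce the remaining parts. Since $\cL$ is very ample and relatively ample over $S$, the evaluation map attached to $\cL^{\T\ell}$ is a closed embedding $\ol\Gr{}^{(\ula)}\hookrightarrow\bP_S\big((\pi_*\cL^{\T\ell})^\vee\big)$; using (c) and double duality of the finite-rank free module $\bD(\ell,\ula)$ identifies the target with $\bP(\cD(\ell,\ula))$, and the embedding is $\uG(k)$-equivariant because the whole construction is, giving (a). Relative ampleness simultaneously identifies $\ol\Gr{}^{(\ula)}$ with $\Proj_S$ of its section algebra $\bigoplus_{\ell\ge0}\pi_*\cL^{\T\ell}$; by (c) and the $\cA(\ula)$-algebra structure of Theorem A(d) this is $\Proj(\bigoplus_{\ell\ge0}\bD(\ell,\ula)^\vee)$, where one invokes the projective normality of spherical Schubert varieties (standard monomial theory, or Frobenius splitting) in the relative setting so that $\Proj$ recovers precisely the section algebra; this is (b). For (d), the un-symmetrized variety is the flat base change $\ol\Gr{}^{\ula}=\ol\Gr{}^{(\ula)}\times_S\bA^k$ along the finite flat cover $\bA^k\to S$, so flat base change for cohomology turns (c) into $H^0(\ol\Gr{}^{\ula},\cL^{\T\ell})\simeq\bD(\ell,\ula)^\vee\T_{\cA(\ula)}\bC[\bA^k]$.
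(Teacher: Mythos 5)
Your overall architecture (prove the cohomological identification, then deduce the embedding, the $\Proj$ description, and the base change statement) matches the paper's, but the heart of your argument for (c) has a genuine gap. You identify $\pi_*\cL^{\T\ell}$ with $\bD(\ell,\ula)^\vee$ over the locus $U$ of pairwise distinct points and then claim the isomorphism ``extends uniquely'' because both sheaves are locally free and $S$ is smooth. This fails for two reasons. First, $S=\Spec\cA(\ula)$ is \emph{not} smooth (nor even normal) for general $\ula$: $\cA(\ula)$ is a polynomial ring only when all $\la_i$ are fundamental; in general $\BA^{(\ula)}$ is the quotient of a subspace arrangement and is merely Cohen--Macaulay (this is why the paper cites \cite{BCES}), so your miracle-flatness step also needs a regular base it does not have. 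Second, and more fundamentally, the complement of $U$ in $S$ is a \emph{divisor} (the colliding locus), not of codimension $\ge 2$, so a homomorphism of locally free sheaves defined off a divisor need not extend, and even if it extends it need not remain an isomorphism there --- think of multiplication by $z^{\pm 1}$ on $\cO_{\BA^1}$ away from the origin. Knowing that the two bundles have fibers of equal dimension along the boundary strata (your appeal to Theorem A(a)) does not force a given map between them to be an isomorphism on those fibers, so your ``induction on the stratification'' does not close the gap.

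This divisorial ambiguity is exactly what the paper's proof of Theorem~\ref{sections of nonsymmetrised bd} is designed to kill: working over $\BA^k$ (where everything is free over a genuine polynomial ring), it compares the two highest-weight lattices inside the generic fiber, observes for $k=2$ that they differ by $(z_1-z_2)^a$, and rules out $a\ne 0$ by matching graded characters ($\ch H^0 = \ch D_{\ell,\lambda}\cdot(1-q)^{-k}$ from Lemma~\ref{k=1}, Proposition~\ref{fat cartan component} and Remark~\ref{character of fat demazure}); the general $k$ case follows by factorization in codimension one plus algebraic Hartogs, and one more character comparison upgrades the inclusion of $\U(\fg[t])$-modules to an equality. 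Only then does the paper descend from $\bC[\bA^k]$ to $\cA(\ula)$, by tracking the cyclic vector and applying the graded Nakayama lemma together with the fiber-at-$0$ computation $\bD(\ell,\ula)\T_{\cA(\ula)}\bC_0\simeq D_{\ell,\la_1+\dots+\la_k}$. You would need to import some version of this character (or lattice) comparison to make your extension step work; your deductions of (a), (b), (d) from (c) are then essentially fine, except that for (d) you should justify base change via local freeness of the pushforward (as the paper does, quoting \cite{FL1}) rather than flatness of the finite cover $\bA^k\to S$, which is again dubious over a non-regular base.
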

	Let us close with the following remark. In the main body of the paper, we denote the weights and roots of $\fg$ by the checked letters 
	(like $\la^\svee$ and $\al^\svee$) and reserve the non-checked notation for the dual data (coroots and coweights). The reason is
	that the central role in our paper is played by the spherical Schubert varieties in the affine (Beilinson-Drinfeld) Grassmannians.
	These varieties are naturally labeled by the coweights (rather than weights), which explains our choice of notation.
	Note that in the ADE case all the checks can be removed without any harm.
	Also, in the simply laced case, if $\lambda_1,\ldots,\lambda_k$ are all fundamental, and
	$\lambda=\lambda_1+\ldots+\lambda_k$, the global Demazure modules $\bD(1,\ula)$ are nothing but
	the global Weyl modules $\bW_\lambda$. However, if $\fg$ is not simply laced, there is no such
	coincidence anymore. That is why we choose to call $\bD(\ell,\ula)$ global Demazure modules
	as opposed to ``higher-level global Weyl modules''.
	
	Our paper is organized as follows. 
	In Section \ref{Generalities} we collect notation and recall main definitions. 
	In Section \ref{bdg} we introduce the symmetrized version of the Beilinson-Drinfeld Grassmannians and Schubert varieties over 
	the spectrum of the highest weight algebras. 
	In Section \ref{Global modules} we study the properties of the global Demazure modules. In particular,
	we prove that they are free over the highest weight algebras. 
	In Section \ref{GDm and BD} we compute the spaces of sections of the powers of the determinant line bundle on BD Schubert varieties. 
	In Appendix, we discuss a connection between global modules and the associativity of the fusion product. We also collect the key objects of the paper.

	\section*{Acknowledgments}
	The authors would like to thank Syu Kato for his suggestion to study the algebra of dual global
	Demazure modules. We are indebted to Roman Travkin for his careful reading, corrections, and improvements of the first draft. We are also grateful to Alexander Braverman and Boris Feigin for
	useful discussions. Finally, our thanks go to the anonymous referee for a very careful reading
	of the manuscript and many useful suggestions.
	The work was partially supported by the grant RSF 19-11-00056.
	I.D.\ is supported in part by the Simons Foundation.

	\section{Generalities}\label{Generalities}
	We start with describing the notation for the key objects of the paper.
	
	\subsection{Classical objects}
	Let $\fg$ be a simple Lie algebra over $\BC$. The corresponding simply connected (resp.\ adjoint)
	complex Lie group will be denoted $G^\sic$ (resp.\ $G^\ad$).
	Let $\fg=\fn_+\oplus\fh\oplus \fn_-$ be the Cartan decomposition and let $r=\dim\fh$
	be the rank of $\fg$. We denote by $\om_1^\svee,\dots,\om_r^\svee$ the fundamental weights of $\fg$
	and by $\al_1^\svee,\dots,\al_r^\svee$ its simple roots. Let $P^\vee=\bigoplus_{i=1}^r \bZ\om_i^\svee$
	be the weight lattice of $G^\sic$ containing the root lattice $Q^\vee=\bigoplus_{i=1}^r \bZ\al_i^\svee$
	(that coincides with the weight lattice
	of $G^\ad$). Let $P_+^\vee=\bigoplus_{i=1}^r \bZ_{\ge 0}\om_i^\svee\subset P^\vee$
	be the set of dominant integral weights. 
	Given $\la^\svee=\sum_{i=1}^r m_i\om_i^\svee\in P_+^\vee$ we set $|\la^\svee|=\sum_{i=1}^r m_i$. 
	
	For a weight $\la^\svee\in P_+^\vee$, let $V_{\la^\svee}$ be the highest weight $\la^\svee$ irreducible $\fg$-module; in particular, the highest weight vector of $V_{\la^\svee}$ is of the $\fh$ weight $\la^\svee$ and is killed by $\fn_+$.
	Let $W$ be the (finite) Weyl group of $\fg$ with the longest element $w_0$. In particular, the lowest weight vector
	in $V_{\la^\svee}$ is of weight $w_0\la^\svee$.
	
	We denote by $P=P_\ad$ (resp.\ $Q=P_\sic$) the coweight lattice of $G^\ad$ (resp.\ of $G^\sic$).
	Thus we have perfect pairings $P_\ad\times Q^\vee\to\BZ,\ P_\sic\times P^\vee\to\BZ$.
	The minimal invariant integral bilinear form on $P_\sic$ (such that the
	square length of a short coroot is 2) gives rise to a linear map $\iota\colon P_\sic\to Q^\vee$.
	It extends by linearity to the same named map
	$P_\sic\subset P_\ad\stackrel{\iota}{\longrightarrow}Q^\vee\otimes_\BZ\BQ$, and
	$\iota(P_\ad)\subset P^\vee\subset Q^\vee\otimes_\BZ\BQ$. The resulting map $P=P_\ad\to P^\vee$ will be
	also denoted by $\iota$. In the simply laced case $\iota\colon P\to P^\vee$ is an isomorphism.
	
	The fundamental coweights in $P$ are denoted $\omega_1,\ldots,\omega_r$; the simple coroots
	in $P$ are denoted by $\alpha_1,\ldots\alpha_r$. We set
	$P_+=\bigoplus_{i=1}^r\BZ_{\ge 0}\omega_i\subset P$.
	
	\subsection{Current algebra modules}
	\label{curr alg mod}
	Let $\fg[t]=\fg\T\bC[t]$ be the current algebra of $\fg$. In what follows we consider graded $\fg[t]$-modules $M$, i.e.
	$M=\bigoplus_{i\ge 0} M_i$, $\fg\T t^k\colon  M_i \to M_{i+k}$. If all $M_i$ are finite-dimensional, then the graded character
	$\ch_q(M)$ is a generating function $\sum_{i\ge 0} q^i\ch M_i$ of the characters of the $\fg\T 1$-modules. 
	
	A module $M$ is called cyclic if it is  generated by a single vector. The cyclic product of two cyclic $\fg[t]$-modules 
	$M_1$ and $M_2$ with fixed cyclic vectors $w_1\in M_1$ and $w_2\in M_2$ is defined as
	\[
	M_1\odot M_2=\U(\fg[t]) . w_1\T w_2\subset M_1\T M_2.
	\]
	For a collection of pairwise distinct complex numbers $c_1,\dots,c_n$ and cyclic graded $\fg[t]$-modules 
	$M_1,\dots,M_n$ the module $M_1(c_1) \T \dots \T M_n(c_n)$ is known to be cyclic with the cyclic vector being the 
	tensor product of cyclic vectors of $M_i$. Here a $\fg[t]$-module $M_i(c_i)$ is defined to be isomorphic to $M_i$ as a vector space and 
	the action of the current algebra on it is twisted by the automorphism $x\T t^i \mapsto x\T (t-c)^i$. We note that if one starts with
	graded cyclic modules $M_i$, the tensor product $\bigotimes_{i=1}^n M_i(c_i)$ is not graded in general.
	The fusion product (graded tensor product) $M_1 \ast \hdots \ast M_k$ is defined as the associated graded of $\bigotimes_{i=1}^n M_i(c_i)$
	with respect to the filtration induced by the action of the ($t$-degree graded) universal enveloping algebra $\U(\fg[t])$
	on the tensor product of cyclic vectors of $M_i$  (\cite{FeLo}).
	
	Let $\bW_{\la^\svee}$  and $W_{\la^\svee}$ be the global and local Weyl modules of highest weight $\la^\svee$ over the Lie algebra $\fg[t]$ (see \cite{CP, CL, FL2, Kato1, Naoi}).
	Let $D_{\lambda}$ be the level one
	affine Demazure module with highest weight $\iota(\lambda)$; in particular, for simply laced algebras
	$\iota$ is an isomorphism and $W_{\iota(\la)}\simeq D_{\la}$ for any coweight $\la$. 
	For $\lambda\in P_+$ we denote
	$\BW_{\iota(\lambda)}$ (resp.\ $W_{\iota(\lambda)}$) simply by $\BW_\lambda$
	(resp.\ $W_\lambda$).
	
	For $\ell\in\bZ_{>0}$ we denote by $D_{\ell,\la}$ the level $\ell$ affine Demazure module with highest weight
	$\ell\iota(\lambda)$ (see subsection \ref{affine and demazure} for details).
	
	Let $M_1,\dots,M_k$ be graded cyclic $\fg[t]$-modules with cyclic vectors $w_i$ of dominant nonzero weights such that $t\h[t]$ annihilates their cyclic vectors.
	Then we define the global module (see \cite{DF})
	\[
	R(M_1,\dots,M_k)=M_1[t]\odot\cdots\odot M_k[t],
	\]
	where $M_i[t]$ is defined as a module isomorphic to $M_i\T\bC[t]$ as a vector space with the action
	of $\fg[t]$ given by
	\begin{equation}\label{formula}
	xt^l.v \otimes t^k=\sum_{j=0}^l (-1)^{l-j}\binom{l}{j}(xt^j.v)\otimes t^{l+k-j} 
	\end{equation}
	for $l, k\in \bZ_{\ge 0}$, $x \in \fg$, $v \in M_i$.
	\begin{rem}
		{\em The analogous formula used in \cite{DF,FeMa2} has no sign $(-1)^{l-j}$. We introduce it here in order to
			match the formulas in the Beilinson-Drinfeld setup, where the minus sign pops up via the change of coordinates
			$t \mapsto t-x$. This sign change obviously produces no harm (simply changing $t \mapsto -t$ in the current algebra parametrization).} 
	\end{rem}
	
	\begin{rem}
	 The modules $R(M_1,\dots,M_k)$ do depend on the choice of the cyclic vectors $w_i$ of $M_i$.  
	\end{rem}
	
	The global module $R(M_1,\dots,M_k)$ admits the right action of $\U(\h[t])$, which commutes with the $\g[t]$-action. The highest weight algebra is defined as a quotient of $\U(\h[t])$ by the annihilator of the cyclic vector $\otimes_{i=1}^k w_i$ of $R(M_1,\dots,M_k)$. It turns out that the highest weight algebra depends only on the weights of cyclic vectors $w_i$ of $M_i$ (not on a particular choice of modules).
	If the weight of $w_i$ is $\iota(\lambda_i)$, then
	we denote the highest weight algebra of $R(M_1,\dots,M_k)$ by $\cA(\la_1,\dots,\la_k)$.
	We will use a short-hand notation
	$\cA(\ula)=\cA(\la_1,\dots,\la_k)$, where $\ula=(\la_1,\dots,\la_k)$.
	
	Since the weight $\iota(\lambda_i)$ subspace of a module $M_i[t]$ is isomorphic to a polynomial algebra in one variable, the algebra $\cA(\ula)$ is naturally embedded into $\bigotimes_{i=1}^k \cA(\la_i) \simeq \bC[z_1,\dots,z_k]$. More precisely, $\cA(\ula)$ is isomorphic to the subalgebra of the polynomial algebra
	$\bC[z_1,\dots,z_k]$ generated by the polynomials
	\[
	\langle\iota(\la_1),h\rangle z_1^l+\langle\iota(\la_2),h\rangle z_2^l+ \dots +
	\langle\iota(\la_k),h\rangle z_k^l,\ l\ge 1, h\in\fh.
	\]
	Indeed, for $h\in\fh$ and $l>0$ formula \eqref{formula} gives
	\[
	ht^l.\otimes_{i=1}^k w_i = \sum_{i=1}^k \otimes_{j=1}^{i-1} w_i
	\otimes  \langle\iota(\la_j),h\rangle w_jt^l \otimes_{j=i+1}^k w_j.
	\]
	In particular, if all $\la_i$ are fundamental coweights, $m_i=\#\{j: \la_j=\om_i\}$, and
	$\lambda=\sum_{j=1}^k\lambda_j=\sum_{i=1}^rm_i\omega_i$, then
	\[
	\cA(\ula)\cong \bC[z_1, \hdots , z_k]^{S_{m_1} \times \hdots \times S_{m_r}}=:\cA_\lambda.
	\] 
	
	We note that
	\begin{itemize}
		\item $\cA(\la_1,\dots,\la_k)\simeq \cA(\ell\la_1,\dots,\ell\la_k)$ for any $\ell\in \bN$.
		\item If $\fg$ is simply laced, all weights $\la_1,\dots,\la_k$ are fundamental, and
		$\la=\sum_{i=1}^k \la_i$, then 
		\[
		R(D_{\la_1},\dots,D_{\la_k})\simeq \bW_\la.
		\]
	\end{itemize}
	
	If all the coweights $\la_i$ are fundamental, and they sum up to $\lambda$,
	then we denote by $\bD_{\ell,\la}$ (global Demazure module) the module
	$R(D_{\ell,\la_1},\dots,D_{\ell,\la_k})$. In particular, for simply-laced $\g$ one has $\bD_{1,\la}\simeq \bW_\la$.
	
	If all the coweights $\la_i$ are fundamental, then the algebra $\cA(\la_1,\dots,\la_k)$ acts freely on 
	$R(D_{\la_1},\dots,D_{\la_k})$ and the fiber at the origin of the global Demazure module is isomorphic to the fusion product $D_{\la_1}*\dots* D_{\la_k} \simeq D_{\la_1 + \hdots + \la_k}$. The higher level analogue still holds with fundamental $\la_i$ replaced by $\ell\la_i$,
	see \cite{DF}.
	
	As we will prove in Proposition \ref{global demazure is projective}, for arbitrary dominant coweights $\lambda_1,\ldots,\lambda_k$,
	the module $R(D_{\ell,\la_1},\dots,D_{\ell,\la_k})$ is free over $\cA(\la_1,\dots,\la_k)$.
	We use the notation 
	\[
	\bD(\ell, \ula)=R(D_{\ell,\la_1},\dots,D_{\ell,\la_k}).
	\]
	\begin{rem}
		{\em We note that $\bD_{\ell,\la} = \bD(\ell,  \underbrace{\om_1, \hdots , \om_1}_{m_1}, \hdots ,
			\underbrace{\om_r, \hdots , \om_r}_{m_r})$ for a coweight $\lambda=\sum_{j=1}^rm_j\omega_j$.}
	\end{rem}
	
	In what follows we will need the following $\cA(\ula)$-analog of the cyclic power.
	Namely, let  
	\[
	\underbrace{R(M_1,\ldots,M_k)\odot_{\CA(\ula)}\ldots\odot_{\CA(\ula)}R(M_1,\ldots,M_k)}_\ell
	\]
	be the $\U(\fg[t])$-span of the $\ell$-th tensor power of the cyclic (highest weight) vector
	of $R(M_1,\ldots,M_k)$ inside the $\ell$-th tensor power over $\cA(\ula)$ of the module $R(M_1,\ldots,M_k)$.
	We denote this cyclic tensor power by $R(M_1,\ldots,M_k)^{\odot\ell}_{\CA(\ula)}$.
	This object will be important in Proposition~\ref{tensor product over A}.
	
	\subsection{Affine Lie algebras and Demazure modules} \label{affine and demazure}
	The details on the material below can be found in \cite{Kac,Kum}. 
	
	Let $\gh=\fg\T \bC[t,t^{-1}]\oplus \bC K\oplus\bC d$ be the untwisted affine Kac-Moody Lie algebra attached to $\fg$. 
	Here $K$ is central element
	and $d$ is (negated) degree operator (i.e. $[d,x\T t^i]=-ix\T t^i$). The algebra $\gh$ enjoys the Cartan decomposition
	$\gh=\fn_+^a\oplus\fh^a\oplus\fn_-^a$, where $\fh^a=\fh\T 1\oplus\bC K\oplus\bC d$ and 
	$\fn_+^a=\fg\T t\bC[t] \oplus \fn_+ \T 1$. We denote by $\fb^a= \fh^a \oplus \fn_+^a$ the Iwahori subalgebra. 
	
	Let $\Lambda_0^\vee$ be the level one basic integrable weight of $\gh$
	(in particular, $\Lambda_0^\vee(\fh\T 1)=0$). We also denote by $\Lambda_i^\vee$, $i=0,\dots,m$,
	the set of all integrable
	level one weights of $\gh$ and by $L(\Lambda_i^\vee)$ the corresponding highest weight $\gh$-modules.
	In particular, the number $m$ of the level one modules is equal to the cardinality of
	$P/Q \simeq \pi_1(G^\ad)$.

	Let $\Gr(\Lambda_i^\vee)\subset \bP(L(\Lambda_i^\vee))$, $i=0,\dots,m$, be the partial affine flag varieties corresponding 
	to maximal parabolic subgroups of the affine Kac-Moody group $\widehat{G}{}^\sic$
	(i.e.\ $\Gr(\Lambda_i^\vee)\simeq \widehat{G}{}^\sic/P_i$, where
	$P_i$ is the stabilizer of the highest weight line in $\bP(L(\Lambda_i^\vee))$). By the very definition, each 
	$\Gr(\Lambda_i^\vee)$ is equipped with the very ample line bundle $\cL$ (the pullback of $\cO(1)$ from $\bP(L(\Lambda_i^\vee))$)
	and one has the affine analog of the Borel-Weil theorem
	\[
	H^0(\Gr(\Lambda_i^\vee),\cL^{\T\ell})^*\simeq L(\ell\Lambda_i^\vee),\ \ell\ge 1, 
	\]
	where $L(\ell\Lambda_i^\vee)$ is the weight $\ell\Lambda_i^\vee$ integrable (level $\ell$) $\gh$-module and the superscript star denotes the restricted dual space.
	
	\begin{rem}
		{\em The union $\sqcup_{i=0}^m \Gr(\Lambda_i^\vee)$ is isomorphic to the affine Grassmannian of $G^\ad$, see the details below.}
	\end{rem}

	Let $W^a=W\ltimes P$ be the extended affine Weyl group (recall that $P$ is the coweight lattice
	of $G^\ad$). Then for any 
	$\la\in P_+$ there exists an element $w_\la\in W^a$ such that the $\fh$-weight of $w_\la\Lambda_0^\vee$
	is equal to $w_0\iota(\la)$.
	Let $\Lambda_i^\vee$ be the unique integrable  level one weight such that
	$w_\la\Lambda_0^\vee-\Lambda_i^\vee$ belongs to the root lattice of $\fg$. 
	Let $u_{w_0\iota(\la)}\in L(\Lambda_i^\vee)$ be a weight $w_0\iota(\la)$ vector. We define the Demazure module 
	$D_{1,\la} \subset L(\Lambda_i^\vee)$ as the $\U(\fb^a)$ span of the vector $u_{w_0\iota(\la)}$. An important property of the 
	Demazure modules $D_{1,\la}$ is that they are invariant with respect to the whole current algebra $\fg[t]\supset \fb^a$. In particular, $D_{1,\la}$ contains the irreducible $\fg$-module $V_{\iota(\la)}$ as the $\U(\fn_+)$ span of $u_{w_0\iota(\la)}$.
	
	The level $\ell$ Demazure module $D_{\ell,\la}$ is defined as the $\U(\fb^a)$ span of the vector
	$u_{w_0\iota(\la)}^{\T\ell}$. By definition,
	$D_{\ell,\la}$ is a subspace of $L(\Lambda_i^\vee)^{\T\ell}$. However, it is easy to see that 
	\[
	D_{\ell,\la}\subset L(\ell\Lambda_i^\vee)\subset L(\Lambda_i^\vee)^{\T\ell}.
	\] 
	By definition, one gets a natural structure of algebra on the space $D_{\bullet,\la}^*=\bigoplus_{\ell\ge 0} D_{\ell,\la}^*$ generated by the
	degree one component $D_{1,\la}^*$ (we set $D_{0,\la}=\bC$).  
	
	
	We define a spherical Schubert variety $\ol\Gr{}^\la$ as the closure of the $G^\sic(\cO)$-orbit of the
	line containing the lowest weight
	vector $u_{w_0\iota(\la)}$ (here $\cO=\bC[\![t]\!]$). Then $\ol\Gr{}^\la$ is embedded as a closed
	subscheme into the projectivization $\bP(D_{1,\la})$ of the Demazure module $D_{1,\la}$
	(see e.g.~\cite[Th\'eor\`eme 2.$\Sigma$ of Chapter X]{matas} or~\cite{kumin}). 
	Moreover, $\ol\Gr{}^\la$ is also embedded as a closed subscheme into the projectivization of an
	arbitrary level Demazure module $D_{\ell,\la}$ as the closure of the lowest weight line.  
	
	\begin{rem}\label{tla}
		{\em Let $t^\la\in\ol\Gr{}^\la\subset \bP(D_{1,\la})$ be the point corresponding to the weight
			$\iota(\la)$ line. Then $\ol\Gr{}^\la$ is the closure of the $G^\sic(\cO)$-orbit of $t^\la$.}  
	\end{rem}
	
	The embedding $\ol\Gr{}^\la\subset\bP(D_{1,\la})$ endows $\ol\Gr{}^\la$ with a very ample line bundle $\cL$,
	the pullback of $\cO(1)$. 
	The line bundle $\cL$ is a generator of the Picard group of $\ol\Gr{}^\la$, and one has the isomorphism of $\fg[t]$-modules:
	\[
	H^0(\ol\Gr{}^\la,\cL^{\T\ell})^* \simeq D_{\ell,\la} \text{ for all }\ell\ge 1.
	\]
	We obtain a presentation of $\ol\Gr{}^\la$ as the projective spectrum of the algebra of dual Demazure modules, i.e.
	$\ol\Gr{}^\la\simeq \Proj(\bigoplus_{\ell\ge 0} D_{\ell,\la}^*)$.
	
	We have 
	\[
	\Gr(\Lambda_i^\vee)=\bigcup_{\lambda : \Lambda_i^\vee-\iota(\la)\in Q^\vee+\Lambda_0^\vee}\ol\Gr{}^\la.
	\]
	Also, $\ol\Gr{}^\la\subset\ol\Gr{}^\mu$ if and only if
	$\mu-\la\in \bigoplus_{j=1}^r \bZ_{\ge 0}\al_j$. 
	
	\subsection{Affine Grassmannians}
	The affine Grassmannian of $G^\ad$ is $\Gr:=\Gr_{G^\ad}=G^\ad(\cK)/G^\ad(\cO)$, where $\cK=\bC(\!(t)\!)$ 
	is the ring of Laurent series and $\cO=\bC[\![t]\!]$ is the ring of Taylor series. The following properties of $\Gr$ can be found in
	\cite{Z1,Z2,Kum}.
	\begin{itemize}
		\item The connected components of $\Gr$ are in bijection with $P/Q$, i.e.\
		$\pi_0(\Gr)\simeq \pi_1(G^\ad)$.
		\item $\Gr = \sqcup_{i=0}^m \Gr(\Lambda_i^\vee)$.
		\item For any $i=0,\ldots,m,\ {\rm Pic}(\Gr(\Lambda_i^\vee))$ is generated by the class of the
		ample determinant line bundle $\cL$.  
	\end{itemize}
	
	Recall (see, for example, \cite{BL}) that the affine Grassmannian $\Gr$ is the moduli space of pairs $(\cP,\beta)$, where 
	\[
	\cP \text{ is a } G^\ad\text{-torsor on } \bA^1,\ \beta\colon 
	\cP_{\bA^1\setminus 0}\to G\times(\bA^1\setminus 0) 
	\text{ is a trivialization on } \bA^1\setminus 0.
	\] 
	Replacing the point 0 with an arbitrary $c\in\bA^1$, one gets a version $\Gr_c$ of the affine Grassmannian.
	Clearly, the isomorphism $\cO\simeq \cO_c=\bC[\![t-c]\!]$ induces the isomorphism $\Gr_c\simeq \Gr$ for any $c$.
	The schemes $\Gr_c$ glue together to the (trivial) bundle $\Gr_{\bA^1}$ over the affine line.
	\begin{rem}
		{\em $\Gr_{\bA^1}$ is the simplest example of a Beilinson-Drinfeld Grassmannian, the general case is discussed in the next section.} 
	\end{rem}

	\section{Beilinson-Drinfeld Schubert varieties}
	\label{bdg}
	We will need several versions of the Beilin\-son-Drinfeld Schubert varieties (see \cite{Z1,Z2}).
	Let us stress from the very beginning that the Beilinson-Drinfeld Grassmannians are defined over a (power of) a smooth curve $X$,
	but in this paper, we only consider the case $X=\bA^1$.
	The standard Beilinson-Drinfeld definition produces schemes over affine spaces.
	We will also need the symmetrized versions with the natural projections to the spectrum of the highest weight
	algebras. So we first discuss the properties of the highest weight algebras and then introduce the symmetrized 
	Beilinson-Drinfeld Schubert varieties.
	
	\subsection{The highest weight algebras}
	\label{high wei alg}
	Let $\ula=(\la_1,\dots,\la_k)$ be a multiset of dominant coweights.
	Let $\la=\sum_{i=1}^k \la_i=\sum_{j=1}^r m_j\om_j$ and  $N=\sum_{j=1}^r m_j=|\la|$.
	We set $$S_\lambda=S_{m_1}\times\ldots\times S_{m_r}.$$
	Recall the algebras $\cA(\la_1,\dots,\la_k)$ and 
	$$\cA_\la \simeq \cA(\underbrace{\om_1, \hdots ,
		\om_1}_{m_1}, \hdots , \underbrace{\om_r, \hdots , \om_r}_{m_r}) \simeq \bC[z_1,\dots,z_N]^{S_\la}$$
	of Section \ref{curr alg mod}.
	
	\begin{lem}
		There exists a natural surjection of algebras $\cA_\la \twoheadrightarrow \cA(\ula)$.
	\end{lem}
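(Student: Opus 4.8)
The plan is to exhibit the surjection geometrically, as the pullback of functions along the natural map from the configuration space $\bA^k$ to the colored configuration space $\bA^\la=\Spec(\cA_\la)$ that records each point together with its coweight; equivalently, as an explicit algebra homomorphism computed on the power-sum generators of the two algebras. First I would write each coweight in the standard form $\la_j=\sum_{p=1}^r c_{jp}\om_p$ with $c_{jp}\in\bZ_{\ge 0}$, so that $m_p=\sum_{j=1}^k c_{jp}$ for every $p$. By \S\ref{curr alg mod} the source algebra is $\cA_\la\simeq\bC[z_{p,s}:1\le p\le r,\ 1\le s\le m_p]^{S_\la}$, generated by the power sums $\sum_{p,s}\langle\iota(\om_p),h\rangle z_{p,s}^i$ for $i\ge 1$ and $h\in\fh$, while $\cA(\ula)\subset\bC[z_1,\dots,z_k]$ is the subalgebra generated by $\sum_{j=1}^k\langle\iota(\la_j),h\rangle z_j^i$.

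Next I would define the ring homomorphism $\phi^*\colon\cA_\la\to\bC[z_1,\dots,z_k]$ as the pullback along the morphism $\phi\colon\bA^k\to\bA^\la=\prod_{p=1}^r\mathrm{Sym}^{m_p}\bA^1$ sending $(z_1,\dots,z_k)$ to the tuple of multisets whose $p$-th entry is $\{z_j\text{ with multiplicity }c_{jp}\}$. Concretely, the $p$-th component of $\phi$ is given by the monic polynomial $\prod_{j=1}^k(X-z_j)^{c_{jp}}$, whose coefficients are regular functions of $z_1,\dots,z_k$; this makes $\phi$ a well-defined morphism. On $S_\la$-invariants the map $\phi^*$ amounts to substituting, for each index $s$, the color-$p$ variable $z_{p,s}$ by one of the $z_j$ with multiplicity $c_{jp}$, the ambiguity in the substitution being exactly an element of $S_\la$, so that the induced map on invariants is canonical.

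The verification is then a one-line computation on generators: $\phi^*$ carries $\sum_{p,s}\langle\iota(\om_p),h\rangle z_{p,s}^i$ to $\sum_p\langle\iota(\om_p),h\rangle\sum_j c_{jp}z_j^i=\sum_j\langle\iota(\la_j),h\rangle z_j^i$, using the linearity of $\iota$ and $\la_j=\sum_p c_{jp}\om_p$. Thus $\phi^*$ sends the chosen generating set of $\cA_\la$ precisely onto the generating set of $\cA(\ula)$, so the image of $\phi^*$ equals $\cA(\ula)$ and the corestriction $\cA_\la\twoheadrightarrow\cA(\ula)$ is the desired natural surjection. I do not expect a genuine obstacle here; the only point needing a moment's care is the well-definedness of $\phi$ (equivalently, the $S_\la$-invariance of the substitution), which is immediate from the polynomial description of its components.
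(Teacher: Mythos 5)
Your proof is correct and is essentially the argument the paper gives: the paper realizes the surjection as a composition of elementary variable-identification maps $\cA(\mu,\la_1,\dots,\la_k)\to\cA(\mu+\la_1,\dots,\la_k)$ on the ambient polynomial rings, and composing these yields exactly your one-step substitution $z_{p,s}\mapsto z_j$; the check on power-sum generators is the same in both. Your geometric packaging via the addition-of-colored-configurations morphism $\bA^k\to\bA^\la$ is a pleasant but inessential reformulation.
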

	\begin{proof}
		Note that $\cA_\la=\cA(\om_{a_1},\dots, \om_{a_N})$, where $\sum_{i=1}^n \om_{a_i}=\la$. Now it suffices to note that
		there exists a natural surjection
		\[
		\cA(\mu, \la_1,\dots,\la_k)\to \cA(\mu+\la_1,\dots,\la_k)
		\]
		induced by the surjection of the larger polynomial algebras
		\[
		\bC[z_1,\dots,z_{k+1}]\to \bC[z_1,\dots,z_k],\ z_1\mapsto z_1,\ z_2\mapsto z_1,\ z_i\mapsto z_{i-1},\ i>2. 
		\]
	\end{proof}

	Let $\BA^\lambda=\BA^N/S_\lambda=\Spec\CA_\lambda$ be the space of configurations of colored points on
	the line $\BA^1$ ($m_i$ points of color $\omega_i$). We have the main diagonal
	$\BA^\lambda\supset\BA^{(\lambda)}\simeq\BA^1$
	formed by all the configurations where all the points coincide. We have a finite morphism of addition
	of configurations $$\add\colon\BA^\nu\times\BA^\mu\to\BA^{\nu+\mu}.$$ Iterating it we obtain
	$$\add\colon\BA^{\lambda_1}\times\ldots\times\BA^{\lambda_k}\to\BA^\lambda.$$
	We define a closed subscheme $\BA^{(\ula)}\subset\BA^\lambda$ as the $\add$-image 
	of the closed subscheme
	$\BA^{(\lambda_1)}\times\ldots\times\BA^{(\lambda_k)}\subset\BA^{\lambda_1}\times\ldots\times\BA^{\lambda_k}$:
	\[
	\BA^{(\ula)}=
	\add\big(\BA^{(\lambda_1)}\times\ldots\times\BA^{(\lambda_k)}\big)\subset \BA^\lambda.
	\]

	\begin{lem}
		One has
		\[
		\bC[\bA^{(\ula)}]=\cA(\ula).
		\]
	\end{lem}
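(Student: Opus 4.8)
The plan is to identify both $\bC[\bA^{(\ula)}]$ and $\cA(\ula)$ with one and the same subalgebra of the polynomial ring $\bC[z_1,\dots,z_k]$, and then match their generators. Since $\bA^{(\ula)}$ is by definition the $\add$-image of the affine space $\bA^{(\la_1)}\times\dots\times\bA^{(\la_k)}\cong\bA^k$ (with coordinates $z_1,\dots,z_k$ on the diagonals), and all schemes in sight are affine with $\add$ finite, the closed subscheme $\bA^{(\ula)}$ carries the scheme-theoretic image structure; thus its coordinate ring is the image of the pullback $g^*\colon\cA_\la\to\bC[z_1,\dots,z_k]$, where $g$ is the composite $\bA^k\hookrightarrow\bA^{\la_1}\times\dots\times\bA^{\la_k}\xrightarrow{\add}\bA^\la$. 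Because this image lives inside the domain $\bC[z_1,\dots,z_k]$ it is automatically reduced, so there is no ambiguity between the scheme-theoretic and the set-theoretic (reduced) image.

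Next I would compute $g^*$ on generators. Writing $\cA_\la=\bC[z_1,\dots,z_N]^{S_\la}\cong\bigotimes_{j=1}^r\bC[x_{j,1},\dots,x_{j,m_j}]^{S_{m_j}}$, this ring is generated by the color-wise power sums $p_{j,s}=\sum_{l=1}^{m_j}x_{j,l}^s$ ($1\le j\le r$, $s\ge 1$). Putting $\la_a=\sum_{j=1}^r m_{a,j}\om_j$, so that $\sum_a m_{a,j}=m_j$, the configuration attached to a point $(z_1,\dots,z_k)$ places $m_{a,j}$ points of color $j$ at $z_a$, whence
\[
g^*(p_{j,s})=\sum_{a=1}^k m_{a,j}\,z_a^s=:P_{j,s}.
\]
Therefore $\bC[\bA^{(\ula)}]=\operatorname{im}(g^*)$ is the subalgebra of $\bC[z_1,\dots,z_k]$ generated by all the $P_{j,s}$.

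Finally I would match this with $\cA(\ula)$, which by its definition in \S\ref{curr alg mod} is generated by the elements $q_{s,h}=\sum_{a=1}^k\langle\iota(\la_a),h\rangle z_a^s$, $s\ge1$, $h\in\fh$. Linearity of $\iota$ gives $\langle\iota(\la_a),h\rangle=\sum_j m_{a,j}\langle\iota(\om_j),h\rangle$, hence
\[
q_{s,h}=\sum_{j=1}^r\langle\iota(\om_j),h\rangle\,P_{j,s},
\]
which shows $\cA(\ula)\subseteq\operatorname{im}(g^*)$. For the reverse inclusion I would invoke that $\iota\colon P\to P^\vee$ is injective (an isomorphism in the simply-laced case, and induced by a nondegenerate form in general), so $\iota(\om_1),\dots,\iota(\om_r)$ are linearly independent in $\fh^*$ and the functionals $h\mapsto\langle\iota(\om_j),h\rangle$ form a basis; one then picks $h_1,\dots,h_r\in\fh$ with $\big(\langle\iota(\om_j),h_c\rangle\big)_{j,c}$ invertible, and inverting this matrix expresses each $P_{j,s}$ as a combination of the $q_{s,h_c}$, giving $\operatorname{im}(g^*)\subseteq\cA(\ula)$ and hence equality.

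The computation itself is routine power-sum bookkeeping; the only points deserving care are the two structural inputs used above — that ``$\add$-image'' should be read as the scheme-theoretic image (unproblematic since $g$ is a morphism of affine schemes into a domain, so the image is reduced), and the injectivity of $\iota$, which is exactly what lets the $P_{j,s}$ and the $q_{s,h}$ generate the same algebra. I do not expect any genuine obstacle beyond verifying these.
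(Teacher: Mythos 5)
Your proof is correct and follows essentially the same route as the paper: both identify $\bC[\bA^{(\ula)}]$ with the image of the restriction of $\cA_\la=\bC[z_1,\dots,z_N]^{S_\la}$ to the diagonal subspace $\bA^k\cong V$ sitting inside the $S_\la$-arrangement, and then recognize that image as the algebra generated by the weighted power sums. The only difference is that where the paper delegates the computation of the invariant/image ring to the argument of \cite[Proposition 2.2]{BCES}, you carry it out explicitly via the generators $P_{j,s}=\sum_a m_{a,j}z_a^s$, and you make explicit the injectivity of $\iota$ needed to match these with the defining generators $\sum_a\langle\iota(\la_a),h\rangle z_a^s$ of $\cA(\ula)$.
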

	
	\begin{proof}
		We denote the coordinates on $\BA^N$ by $x_{i,j}$, where $i=1,\dots,r$ and $j=1,\dots,m_i$.
		The group $S_\lambda$ acts by permuting the second indices.
		Let $\la_a=\sum_{b=1}^r m_{a,b} \om_b$ for $a=1,\dots,k$. In particular, $\sum_{a=1}^k m_{a,b}=m_b$
		for all $b=1,\dots,r$. Thus the coordinates with a fixed first index are divided into $k$ groups.
		For each $a=1,\ldots,k$, we combine the corresponding groups for all the possible first indices
		into one big group $\Gamma_a$. Now all the coordinates in $\BA^N$ are divided into groups
		$\Gamma_a,\ 1\leq a\leq k$.
		We consider the linear subspace $V$ in $\BA^N$ given by equations $x_{i,j}=x_{i',j'}$ whenever
		$(i,j)$ and $(i',j')$ lie in the same group $\Gamma_a$. We consider the saturation $S_\lambda V$
		(a union of a few vector subspaces in $\BA^N$). Finally, $\BA^{(\ula)}=(S_\lambda V)/S_\lambda$.
		Now the same argument as in the proof of~\cite[Proposition 2.2]{BCES} finishes our proof.
	\end{proof}
	
	\begin{rem}\label{abuse}
		{\em By construction, $\bA^k=\BA^{(\lambda_1)}\times\ldots\times\BA^{(\lambda_k)}$ is finite over
			$\bA^{(\ula)}$, cf.~\cite{DF, BCES}. For a closed point $\bc = (c_1, \hdots, c_k) \in \bA^k$ we sometimes
			keep the same notation for its image in $\BA^{(\ula)}$. For instance, by $\bC_\bc$ we usually mean
			the one-dimensional $\bC[\bA^{(\ula)}]$-module corresponding to the point $\bc$.}
		
	\end{rem}

	\subsection{BD Grassmannians and spherical Schubert varieties}
	The  Bei\-lin\-son-Drinfeld Grassmannian $\Gr_{\bA^k}$ ($BD$ Grassmannian for short) is the moduli space
	of collections consisting of 
	the points $(c_1,\dots,c_k)\in \bA^k$, a $G^\ad$-torsor $\cP$ over $\bA^1$, and a trivialization of
	$\cP$ outside the points $c_i$.
	\begin{example}
		{\em If $k=1$ then $\Gr_{\bA^1}$ is fibered over the affine line with a fiber isomorphic to the affine Grassmannian $\Gr$.} 
	\end{example}
	In general, the fiber of the natural projection $\pi\colon \Gr_{\bA^k}\to \bA^k$ over a point
	$(c_1,\dots,c_k)$ is isomorphic to the product of $a$ copies of $\Gr$, where $a$ is the number of
	distinct entries $c_i$. 
	\begin{example}
		{\em Let $\oA^k,\ k\ge 2$ be the open subvariety of $\BA^k$ consisting of points with pairwise
			distinct coordinates. Then} 
		\begin{equation}\label{openpart}
		\pi^{-1}(\oA^k)\simeq\oA^k\times \Gr^k.
		\end{equation} 
	\end{example}
	
	The BD Grassmannian $\Gr_{\BA^k}$ enjoys the key {\em factorization property}.
	We have the addition of configurations morphism $\add\colon\BA^k\times\BA^l\to\BA^{k+l}$ and
	an open subset $(\BA^k\times\BA^l)_{\on{disj}}\subset\BA^k\times\BA^l$ formed by all the pairs
	of disjoint effective divisors. Then there is a canonical isomorphism
	\[(\Gr_{\BA^k}\times\Gr_{\BA^l})|_{(\BA^k\times\BA^l)_{\on{disj}}}\cong
	\Gr_{\BA^{k+l}}\times_{\BA^{k+l}}(\BA^k\times\BA^l)_{\on{disj}}.\]
	
	The BD Grassmannian $\Gr_{\bA^k}$ is an ind-scheme, i.e.\ it is an inductive limit of the finite-dimensional 
	BD Schubert varieties $\ol\Gr{}^{\ula}$ for $k$-tuples of dominant coweights $\ula=(\la_1,\dots,\la_k)$.
	More precisely, we consider a group scheme $\BG(k)$ over $\bA^k$, whose fiber over a point
	$\bc=(c_1,\dots,c_k)\in\bA^k$
	is equal to the inverse limit ($n\to\infty$)
	\begin{equation}\label{inverselimit}
	\uG(k)_\bc=\varprojlim_{n} G^\sic(\bC[t]/P(t)^n),\ P(t)=\prod_{i=1}^k (t-c_i).
	\end{equation}
	Clearly, a fiber $\uG(k)_\bc$ is isomorphic to the $a$-th power of the group $G^\sic(\cO)$, where 
	$a$ is the number of distinct elements among $c_i$. 
	The group $\uG(k)$ naturally acts on $\Gr_{\bA^k}$ fiberwise.
	
	The spherical Schubert varieties in the BD Grassmannian are the closures of the
	$\uG(k)$-orbits in the fiber over the generic point of $\bA^k$. The orbits are parametrized by the
	$k$-tuples $\ula\in P_+^k$. Given such a collection, let 
	$t^\ula\colon\bA^k\to \Gr_{\bA^k}$ be a section of $\pi$ such that for $\bc\in\oA^k$ one has
	\[
	t^\ula(\bc)=((t-c_1)^{\la_1},\ldots,(t-c_k)^{\lambda_k})\in \prod_{i=1}^k \Gr_{c_i} 
	\]
	(so the total section is the closure of $t^\ula(\oA^k)$). 
	Now the BD Schubert varieties are defined as the closures of the $\uG(k)$-orbits: 
	\[
	\ol\Gr{}^{\ula}=\ol{\uG(k).t^\ula}\subset \Gr_{\bA^k}.
	\]    
	
	The restriction of $\pi\colon\Gr_{\BA^k}\to\BA^k$ to $\ol\Gr{}^{\ula}$ is denoted by
	$\pi_\ula\colon \ol\Gr{}^{\ula}\to \bA^k$. This is a flat morphism, and all the fibers are
	reduced~\cite[Proposition 1.2.4]{Z1} (it is proved for $k=2$ in {\em loc.\ cit.}, but the
	proof works for arbitrary $k$).
	The fiber $\ol\Gr{}^{\ula}_\bc=\pi_\ula^{-1}(\bc)$ over a point
	$\bc\in \bA^k$ with
	\begin{equation}\label{point}
	c_1=\ldots =c_{i_1}\ne c_{i_1+1}=\ldots = c_{i_1+i_2}\ne \ldots \ne c_{k-i_s+1}=\ldots =c_k
	\end{equation}
	is isomorphic to the product 
	\[
	\ol\Gr{}^{\la_1+\ldots+\la_{i_1}}\times\ol\Gr{}^{\la_{i_1+1}+\ldots+\la_{i_1+i_2}}\times\ldots
	\times\ol\Gr{}^{\la_{k-i_s+1}+\ldots+\la_k} 
	\]
	of spherical  Schubert varieties in the affine Grassmannian $\Gr$.
	In particular, the fiber of $\pi_\ula$ over
	the origin (or any other point of the total diagonal) is isomorphic to the spherical
	Schubert variety $\ol\Gr{}^{\la_1+\ldots+\la_k}$. 
	
	The BD Grassmannians and the BD Schubert varieties carry the relatively very ample determinant
	line bundle $\cL$. In particular, for any $\ell\ge 1$
	\[
	H^0(\ol\Gr{}^\ula_\bc,\cL^{\T\ell}_\bc)^* \simeq D_{\ell,\la_1+\ldots+\la_{i_1}}\T\dots \T
	D_{\ell,\la_{k-i_s+1}+\ldots+\la_k},
	\] 
	where $\cL_\bc$ is the restriction of the line bundle $\cL$ to the fiber $\ol\Gr{}^\ula_\bc$.
	
	\bigskip
	
	We introduce also the partially symmetrized (colored) version $\ol\Gr{}^{(\ula)}$ of the
	BD Schubert varieties. To define it we first consider the case of fundamental coweights $\la_i$. 
	So assume that all $\la_i$ are fundamental, i.e.
	\[
	\la_1=\dots=\la_{m_1}=\om_1,\dots, \la_{k-m_r+1}=\dots=\la_k=\om_r.
	\] 
	Let $\la=\sum_{i=1}^k \la_i=\sum_{j=1}^r m_j\om_j$, and $N=|\lambda|=k$.
	The action of $S_\lambda=S_{m_1}\times\ldots\times S_{m_r}$ on $\BA^N$ lifts to an action of
	$S_\lambda$ on $\Gr_{\BA^N}$ such that $\pi_{\ula}$ is $S_\lambda$-equivariant. We define
	$$\Gr_{\BA^\lambda}:=\Gr_{\BA^N}/S_\lambda.$$ 
	It is the moduli space of $G^\ad$-torsors on $\BA^1$
	trivialized away from an $N$-tuple of points $(x_1,\ldots,x_N)$, but we disregard the order within
	the groups $(x_1,\ldots,x_{m_1}),\ldots,(x_{N-m_r+1},\ldots,x_N)$.
	We introduce the closed subvariety $\ol\Gr{}^{(\ula)}\subset \Gr_{\BA^\lambda}$ as the categorical
	quotient
	\[
	\ol\Gr{}^{(\ula)}=\ol\Gr{}^{\ula}/S_\lambda\subset\Gr_{\BA^N}/S_\lambda=\Gr_{\BA^\lambda}.
	\]
	Since the collection $\ula$ of fundamental weights is uniquely
	determined by their sum $\lambda$, we also use the notation $\ol\Gr_\lambda$ for $\ol\Gr{}^{(\ula)}$.
	
	Now we consider an arbitrary $k$-tuple $\ula=(\lambda_1,\ldots,\lambda_k)$ (so that $\lambda_i$
	are not necessarily fundamental coweights). We set again $\lambda=\lambda_1+\ldots+\lambda_k$.
	Recall the closed subscheme $\BA^{(\ula)}\subset\BA^\lambda$ introduced in Section \ref{high wei alg}.
	We set 
	$$\ol\Gr{}^{(\ula)}:=\ol\Gr_\lambda\times_{\BA^\lambda}\BA^{(\ula)}.$$
	The natural projection
	$\ol\Gr{}^{(\ula)}\to\BA^{(\ula)}$ is denoted $\pi_{(\ula)}$.
	
	Note that in case when $(\ula) = (\la, \la, \hdots, \la)$, $\Gr^{(\ula)}$ is a Schubert variety in the symmetrized version of Beilinson-Drinfeld Grassmannian (see \cite{Z2}).
	
	The determinant line bundle $\cL$ descends from $\Gr_{\BA^N}$ to $\Gr_{\BA^\lambda}$.
	We will keep the same notation $\cL$ for its restriction to $\ol\Gr_\lambda$ and to $\ol\Gr{}^{(\ula)}$.

	\begin{prop} \label{sections under symmetrisation}
		\textup{(a)} Let $\la_1,\dots,\la_k$ be fundamental coweights, $\la=\sum_{i=1}^k \la_i$.
		Then one has the base change isomorphism:
		\begin{gather*}
		H^0(\ol\Gr{}^{\ula},\cL^{\T\ell})\cong H^0(\ol\Gr_\lambda,\cL^{\T\ell})\T_{\cA_\la} \BC[\bA^k].
		\end{gather*}
		\textup{(b)}  Let $\la_1,\dots,\la_k$ be arbitrary dominant coweights, $\la=\sum_{i=1}^k \la_i$.
		Then one has the base change isomorphism:
		\begin{gather*}
		  H^0(\ol\Gr{}^\ula,\cL^{\T\ell})\cong H^0(\ol\Gr{}^{(\ula)},\cL^{\T\ell})\T_{\cA(\ula)} \bC[\bA^k].
                \end{gather*}
\textup{(c)} The $\bC[\bA^k]$-module $H^0(\ol\Gr{}^\ula,\cL^{\T\ell})$ is free.

\noindent \textup{(d)} The $\cA(\ula)$-module $H^0(\ol\Gr{}^{(\ula)},\cL^{\T\ell})$ is free.
		\end{prop}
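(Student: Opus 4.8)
The plan is to prove part (b) first, since part (a) is the special case where all $\la_i$ are fundamental (in which $\cA(\ula)=\cA_\la$ and $\bA^k$ is literally $\BA^{(\lambda_1)}\times\ldots\times\BA^{(\lambda_k)}$, finite over $\BA^{(\ula)}$); the two statements are really the same base-change assertion. The central point is that both $\ol\Gr{}^\ula$ and $\ol\Gr{}^{(\ula)}$ are built by fiber product from the single symmetrized object $\ol\Gr_\lambda$ over $\BA^\lambda$. Indeed, $\ol\Gr{}^{(\ula)}=\ol\Gr_\lambda\times_{\BA^\lambda}\BA^{(\ula)}$ by definition, and I would first establish the analogous description of the unsymmetrized Schubert variety: there is a Cartesian square
\[
\begin{CD}
\ol\Gr{}^\ula @>>> \ol\Gr{}^{(\ula)} \\
@V{\pi_\ula}VV @VV{\pi_{(\ula)}}V \\
\bA^k @>>> \BA^{(\ula)},
\end{CD}
\]
where the bottom arrow is the finite morphism of Remark \ref{abuse}. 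This is where one must be a little careful: for fundamental weights $\ol\Gr{}^{(\ula)}=\ol\Gr{}^\ula/S_\lambda$ by construction, so the square expresses $\ol\Gr{}^\ula$ as the pullback of its own $S_\lambda$-quotient along the quotient map $\bA^k=\BA^N\to\BA^N/S_\lambda=\BA^\lambda$; for general $\ula$ one combines this with the defining fiber product over $\BA^{(\ula)}$. The key geometric input is the factorization property of BD Grassmannians, which guarantees that the group-scheme orbit closures behave well under restriction to diagonal strata, so that the pullback of $\ol\Gr{}^{(\ula)}$ is exactly $\ol\Gr{}^\ula$ and not something larger.

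Granting the Cartesian square, the isomorphism reduces to cohomology-and-base-change for the proper morphism $\pi_{(\ula)}$. I would invoke flat base change along $\bA^k\to\BA^{(\ula)}$, which is a finite (hence affine) morphism, so the comparison map
\[
H^0\big(\ol\Gr{}^{(\ula)},\cL^{\T\ell}\big)\T_{\cA(\ula)}\BC[\bA^k]\longrightarrow
H^0\big(\ol\Gr{}^{\ula},\cL^{\T\ell}\big)
\]
is the natural candidate. Because $\cL$ descends from $\Gr_{\BA^N}$ to $\Gr_{\BA^\lambda}$ and is compatible with all these pullbacks, its restriction to $\ol\Gr{}^\ula$ is precisely the pullback of $\cL$ on $\ol\Gr{}^{(\ula)}$, so the line bundle bookkeeping matches. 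To turn the base-change map into an isomorphism I need the higher direct images $R^{>0}(\pi_{(\ula)})_*\cL^{\T\ell}$ to vanish and $(\pi_{(\ula)})_*\cL^{\T\ell}$ to be locally free (equivalently, flat) over $\cA(\ula)$; then the formation of $H^0$ along the fibers commutes with arbitrary base change. The vanishing of higher cohomology on the fibers follows from the stated vanishing $H^{>0}(\ol\Gr{}^{\ula},\cL^{\T\ell})=0$ together with the product description of the fibers as products of affine spherical Schubert varieties, on each of which $\cL^{\T\ell}$ has no higher cohomology by the Borel–Weil picture recalled in \S\ref{affine and demazure}. Flatness of the pushforward is where Theorem A(c)/Proposition \ref{global demazure is projective}—freeness of $\bD(\ell,\ula)$ over $\cA(\ula)$—enters, once one has identified the pushforward with (the sheaf of) a dual global Demazure module; to avoid circularity with Theorem B(c) I would instead argue flatness directly from constancy of the fiber dimensions: the dimension of $H^0(\ol\Gr{}^\ula_\bc,\cL^{\T\ell}_\bc)$, computed fiberwise as a tensor product of level-$\ell$ Demazure modules, depends only on $\lambda=\sum\la_i$ and not on the coincidence pattern of $\bc$, so the Euler characteristic is constant along $\BA^{(\ula)}$ and, combined with the $H^{>0}$-vanishing, Grauert's theorem yields local freeness of $(\pi_{(\ula)})_*\cL^{\T\ell}$.

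The main obstacle I anticipate is the constancy of the fiber dimension of the sections, i.e.\ showing that $\dim H^0(\ol\Gr{}^\ula_\bc,\cL^{\T\ell}_\bc)$ is independent of $\bc\in\BA^{(\ula)}$. As $\bc$ degenerates from the generic stratum (where the fiber is a product $\ol\Gr{}^{\la_1}\times\cdots\times\ol\Gr{}^{\la_k}$ and sections are $D_{\ell,\la_1}\T\cdots\T D_{\ell,\la_k}$) to the total diagonal (where the fiber is $\ol\Gr{}^{\la_1+\ldots+\la_k}$ and sections are $D_{\ell,\la_1+\ldots+\la_k}$), one must verify that these representations have equal dimension. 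This is exactly the content that $D_{\ell,\la_1}*\cdots*D_{\ell,\la_k}\simeq D_{\ell,\la_1+\ldots+\la_k}$ as $\fg$-modules (the fusion-product degeneration), recalled after the definition of $\cA(\ula)$ in \S\ref{curr alg mod} and proved in \cite{DF}; intermediate strata follow by partial fusion. Once this dimension count is in hand, Grauert's criterion applies uniformly and the base-change isomorphism, hence both (a) and (b), follows. The remaining verification — that the section of $\pi$ cutting out $\ol\Gr{}^\ula$ really pulls back correctly and that the $S_\lambda$-action used to form $\ol\Gr_\lambda$ is free enough for the quotient to behave well cohomologically — is routine given the factorization property, so I would treat it briefly.
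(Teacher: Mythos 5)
Your proposal is correct and follows the same skeleton as the paper's proof: the cartesian square relating $\ol\Gr{}^{\ula}/\bA^k$ to $\ol\Gr{}^{(\ula)}/\bA^{(\ula)}$, the compatibility of the determinant line bundles, local freeness of the pushforward $\pi_{(\ula)*}\cL^{\T\ell}$, and then cohomology-and-base-change along the finite morphism $\bA^k\to\bA^{(\ula)}$. The one genuine difference is how local freeness is obtained. You propose to apply Grauert's criterion directly to $\pi_{(\ula)}$ over $\bA^{(\ula)}$, using the constancy of $\dim H^0$ on fibers, which you correctly trace back to the Fourier--Littelmann fusion isomorphism $D_{\ell,\la_1}\ast\cdots\ast D_{\ell,\la_k}\simeq D_{\ell,\la_1+\cdots+\la_k}$. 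The paper instead quotes from [FL1] the local freeness of the pushforward of $\cL^{\T\ell}$ from $\ol\Gr{}^{\ula}$ to $\bA^N$ (for fundamental coweights), passes to $S_\la$-invariants to get a locally free sheaf $\cW$ on $\bA^\la$ (a direct summand of a locally free sheaf), and then simply restricts $\cW$ to the closed subscheme $\bA^{(\ula)}\subset\bA^\la$. This detour through the smooth ambient space $\bA^\la$ buys something concrete: restriction of a locally free sheaf to \emph{any} closed subscheme is locally free, so one never needs Grauert over $\bA^{(\ula)}$ itself. Your direct Grauert argument, by contrast, quietly requires (i) that $\bA^{(\ula)}$ be reduced (true, from its description as $(S_\la V)/S_\la$, but it must be noted), (ii) that $\cL^{\T\ell}$ be flat over $\bA^{(\ula)}$, i.e.\ that $\ol\Gr{}^{(\ula)}\to\bA^{(\ula)}$ be flat --- which for the symmetrized quotient is not immediate and would need something like miracle flatness via Cohen--Macaulayness of $\ol\Gr_\la$ --- and (iii) the fiber description of $\pi_{(\ula)}$ rather than of $\pi_{\ula}$. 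These points are all surmountable, and you flag them as ``routine,'' but they are precisely the technicalities the paper's route is designed to avoid; both arguments ultimately rest on the same input from [FL1].
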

	\begin{proof}
		We have a cartesian square
		\[\begin{CD}
		\ol\Gr{}^\ula @>>> \ol\Gr{}^{(\ula)}\\
		@VVV @VVV\\
		\BA^{(\lambda_1)}\times\ldots\times\BA^{(\lambda_k)} @>>> \BA^{(\ula)},
		\end{CD}\]
		and the determinant line bundle $\cL$ on $\ol\Gr{}^\ula$ is the pullback of the determinant
		line bundle $\cL$ on $\ol\Gr{}^{(\ula)}$. The pushforward of the relatively very ample
		line bundle $\CL^{\otimes\ell}$ from $\ol\Gr{}^{\ula}$ to $\BA^k$ is a locally free sheaf $\CV$.	
                Indeed, we already know that $\pi_{\ula}\colon\ol\Gr{}^{\ula}\to\BA^k$ is flat and all the
                fibers are reduced. But the dimension of the space of sections of $\cL^{\T\ell}$ restricted
                to any fiber is
                independent of the choice of fiber by~\cite[Theorem 1]{FL1} or~\cite[Theorem 1.2.2]{Z1}.

                Furthermore, the pushforward of $\CL^{\otimes\ell}$ from $\ol\Gr_\lambda$ to $\BA^\lambda$ is a direct
		summand of $S_\lambda$-invariants in the pushforward of $\CV$ from $\BA^k$ to $\BA^\lambda$.
		Hence the pushforward of $\cL^{\otimes\ell}$ from $\ol\Gr_\lambda$ to $\BA^\lambda$
		is a locally free sheaf $\CW$
		as well. Finally, $\pi_{(\ula)*}\cL^{\otimes\ell}$ is the restriction of $\CW$ to
		$\BA^{(\ula)}\subset\BA^\lambda$,
		and hence $\pi_{(\ula)*}\cL^{\otimes\ell}$ is a locally free sheaf on $\BA^{(\ula)}$ as well.
		In particular, it is flat, and it remains to apply the base change for the above cartesian square.  
                This proves (a) and (b).

                To prove (c) and (d) note that $H^0(\ol\Gr{}^{(\ula)},\cL^{\T\ell})$ is projective over $\cA(\ula)$
                because its fibers have the same dimension at every closed point. Since both $\cA(\ula)$ and
                $H^0(\ol\Gr{}^{(\ula)},\cL^{\T\ell})$ are non-negatively graded, and the degree~$0$ part of
                $\cA(\ula)$ is $\bC$, we conclude by the graded Nakayama lemma that 
                $H^0(\ol\Gr{}^{(\ula)},\cL^{\T\ell})$ is free over
                $\cA(\ula)$.\footnote{This last observation is due to Roman Travkin.}
        \end{proof}

	\section{Global modules}\label{Global modules}
	In this section, we prove several statements on the global modules defined in \cite{DF}. Although
	we will be mainly interested in the global Demazure modules, we start in a more general setup.
	
	
	
	So let $\{M_i\}_{i=1}^k$  be cyclic graded $\fg[t]$-modules with cyclic vectors of dominant nonzero weights $\{\iota(\la_i)\}_{i=1}^k$, such that $t\h[t]$ annihilates these cyclic vectors. 
	Recall that it was proved in \cite{DF} that for $\bc = (c_1, \hdots, c_k)$ lying in some Zariski-open subset of $\bC^k$ one has
	\begin{equation} \label{fiber of global module at generic point}
	R(M_1, \hdots , M_k) \T_{\cA(\ula)} \bC_\bc \simeq \bigotimes_{i = 1}^k M_i(c_i)
	\end{equation}
	(see Remark \ref{abuse}).
	
	It was also shown that the fiber of $R(M_1, \hdots , M_k)$ at 0 surjects to the fusion product:
	\begin{equation} \label{fiber surjects to fusion}
	R(M_1, \hdots , M_k) \T_{\cA(\ula)} \bC_0 \twoheadrightarrow M_1 \ast \hdots \ast M_k.
	\end{equation}
	In particular, this surjection is an isomorphism if and only if
	the equality of dimensions
	$\dim \big( R(M_1, \hdots , M_k) \T_{\cA(\ula)} \bC_0 \big) = \prod_{i = 1}^k \dim M_i$ holds.

	The next proposition shows that if the above surjection is an isomorphism,
        then the Zariski-open subset for which \eqref{fiber of global module at generic point} holds can be described explicitly.
	
	\begin{prop}
		Suppose an isomorphism $R(M_1, \hdots , M_k) \T_{\cA(\ula)} \bC_0 \simeq M_1 \ast \hdots \ast M_k$ holds. Then an isomorphism 
		$R(M_1, \hdots , M_k) \T_{\cA(\ula)} \bC_\bc \simeq \bigotimes_{i = 1}^k M_i(c_i)$ holds for any $\bc$ with pairwise distinct coordinates 
		$c_i \neq c_j$ (not just for $\bc$ in some open subset).
	\end{prop}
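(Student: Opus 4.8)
The plan is to sandwich the fibre dimension $\dim\big(R(M_1,\dots,M_k)\T_{\cA(\ula)}\bC_\bc\big)$ between $\prod_{i=1}^k\dim M_i$ from both sides and then upgrade the resulting equality to an isomorphism via a canonical surjection. Write $R=R(M_1,\dots,M_k)$ and $n=\prod_{i=1}^k\dim M_i$. Since $M_1\ast\dots\ast M_k$ is by definition the associated graded of a filtration on $\bigotimes_iM_i(c_i)$, it has dimension $n$, so the hypothesis gives $\dim(R\T_{\cA(\ula)}\bC_0)=n$. The point $0\in\Spec\cA(\ula)$ is the cone point cut out by the augmentation ideal $\cA(\ula)_{>0}$, so $R\T_{\cA(\ula)}\bC_0=R/\cA(\ula)_{>0}R$; by graded Nakayama this already shows that $R$ is a finitely generated, hence coherent, $\cA(\ula)$-module, which is exactly what I need to invoke semicontinuity below.

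Next I would establish the upper bound $\dim(R\T_{\cA(\ula)}\bC_\bc)\le n$ for every $\bc$. The grading on $R$ and the grading on $\cA(\ula)$, in which each position variable $z_i$ has degree one, are compatible, so the degree-scaling maps assemble into a $\bG_m$-action on $R$ that is semilinear over the scaling $\bG_m$-action on $\Spec\cA(\ula)$. Concretely, for $a\in\bG_m$ the scaling map induces an isomorphism of fibres $R\T_{\cA(\ula)}\bC_\bc\xrightarrow{\sim}R\T_{\cA(\ula)}\bC_{a\bc}$, so the fibre dimension is constant along each $\bG_m$-orbit. As $0$ is the unique fixed point and lies in the closure of every orbit (indeed $a\bc\to 0$ as $a\to 0$), upper semicontinuity of fibre dimension for the coherent sheaf attached to $R$ gives $\dim(R\T_{\cA(\ula)}\bC_\bc)\le\dim(R\T_{\cA(\ula)}\bC_0)=n$ for all $\bc$.

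For the matching lower bound I would use that $\bc$ has pairwise distinct coordinates. Specialising the position variable in each factor gives a map $\on{ev}_{c_i}\colon M_i[t]\to M_i(c_i),\ v\T t^m\mapsto c_i^m v$, and a direct check against~\eqref{formula} shows it is $\fg[t]$-equivariant, the binomial twist by $-c_i$ matching exactly. Tensoring and restricting to $R\subset\bigotimes_iM_i[t]$ produces a $\fg[t]$-map $R\to\bigotimes_iM_i(c_i)$ that is $\cA(\ula)$-linear for the evaluation $\cA(\ula)\to\bC_\bc$, hence factors through $R\T_{\cA(\ula)}\bC_\bc$. It sends the cyclic vector of $R$ to $\bigotimes_iv_i$, and because the $c_i$ are distinct the target is a cyclic $\fg[t]$-module by the cyclicity result of \S\ref{curr alg mod}; therefore the map is surjective and $\dim(R\T_{\cA(\ula)}\bC_\bc)\ge n$. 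Combining the two bounds forces equality, so the surjection is an isomorphism, which is the assertion.

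I expect the main obstacle to be the upper bound rather than the surjection: semicontinuity on its own only bounds the special fibre \emph{from below}, and the crux is to reverse this by transporting the hypothesis at $0$ to an arbitrary $\bc$ through the scaling degeneration $a\bc\to 0$. The two points to get right there are the compatibility of gradings producing the $\bG_m$-equivariant structure on fibres, and the coherence of $R$ over $\cA(\ula)$ required for semicontinuity — the latter being, conveniently, supplied by the hypothesis itself via Nakayama.
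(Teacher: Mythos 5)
Your proposal is correct and follows essentially the same route as the paper: an upper bound on the fibre dimension transported from the hypothesis at $0$ by semicontinuity (which the paper simply cites from \cite{DF}, while you make the $\bG_m$-scaling and graded Nakayama steps explicit), combined with the evaluation surjection $R(M_1,\hdots,M_k)\T_{\cA(\ula)}\bC_\bc\twoheadrightarrow\bigotimes_{i=1}^kM_i(c_i)$, whose surjectivity rests, exactly as in the paper, on the cyclicity of a tensor product of cyclic modules supported at pairwise distinct points (\cite[Proposition 1.4]{FeLo}).
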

	\begin{proof}
		As explained in \cite{DF}, an isomorphism 
		\[R(M_1, \hdots , M_k) \T_{\cA(\ula)} \bC_0 \simeq M_1 \ast \hdots \ast M_k\] implies by the
		semi-continuity theorem the equality
		\[\dim(R(M_1, \hdots , M_k) \T_{\cA(\ula)} \bC_\bc) = \prod_{i = 1}^{k} \dim M_i\] for any $\bc$.
		Hence, it suffices to construct a surjection \[R(M_1, \hdots , M_k) \T_{\cA(\ula)} \bC_\bc
		\twoheadrightarrow \bigotimes_{i = 1}^k M_i(c_i)\] for $\bc$ with $c_i \neq c_j$.
		For any $i$ clearly one has $M_i[t] \twoheadrightarrow M_i[t] \T_{\cA(\la_i)} \bC_{c_i} \simeq M_i(c_i)$ and hence 
		\[R(M_1, \hdots, M_k) = \bigodot_{i = 1}^k M_i[t] \twoheadrightarrow \bigodot_{i = 1}^k M_i(c_i) \simeq \bigotimes_{i = 1}^k M_i(c_i)
		\]
		(the last isomorphism is proved in \cite[Proposition 1.4]{FeLo}).
		To show that this surjection factors through $R(M_1, \hdots, M_k) \T \bC_\bc$ one needs to show that the relations 
		$ht^n - (\langle\iota(\la_1),h\rangle c_1^n + \hdots + \langle\iota(\la_k),h\rangle c_k^n)$ hold in the right-hand side module for any $h \in \h$, which is clearly true.
	\end{proof}
	

	For a global module $R(M_1, \hdots, M_k)$ we denote by $R(M_1, \hdots, M_k)^\vee$ its $\cA(\ula)$-dual, i.e. 
	\[
	R(M_1, \hdots, M_k)^\vee={\rm Hom}_{\cA(\ula)}(R(M_1, \hdots, M_k), \cA(\ula)).
	\]

	\begin{rem}
		{\em We note that $R(M_1, \hdots, M_k)^\vee$ carries a natural structure of $\fg[t]$-module.
			However, while $R(M_1, \hdots, M_k)$ is cyclic,
			$R(M_1, \hdots, M_k)^\vee$ does not have to be cyclic or cocyclic.
			The simplest example pops up for $\fg=\msl_2$, $k=2$ and $\la_1=\la_2=\om$. We note that 	
			if $R(M_1, \hdots, M_k)$ is free over $\cA(\ula)$, then the $q$-character of $R(M_1, \hdots, M_k)^\vee$ is computed as 
			$\ch_q\big(R(M_1, \hdots, M_k) \T_{\cA(\ula)} \bC_0\big)|_{q\to q^{-1}}\cdot\ch_q \cA(\ula) $.
			Hence in our special case one has
			\[
			\ch_q \bW_{2\om}^\vee = q^{-1}+(z^2+2+z^{-2})+q...,
			\] 
			showing that $\bW_{2\om}^\vee$ is neither cyclic nor cocylic.}
	\end{rem}

	
	\begin{prop} \label{tensor product over A}
	  Assume that the weights of the cyclic vectors of $\g[t]$-modules $M_i$ are
          nonzero. Then there is an isomorphism of $\g[t] - \cA(\ula)$-bimodules:
		\[
		R(M_1, \hdots, M_k)^{\odot\ell}_{\cA(\ula)} \simeq R(M_1^{\odot\ell}, \hdots, M_k^{\odot\ell})
		\]
		(notation of Section \ref{curr alg mod}).
	\end{prop}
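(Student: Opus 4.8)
The plan is to deduce the statement from two ingredients: a one--factor (monoidality) identity coming from the explicit shape of the action \eqref{formula}, and a ``Fubini'' comparison of the cyclic power over the small algebra $\cA(\ula)$ with the product of cyclic powers over the individual algebras $\cA(\la_i)\cong\bC[z_i]$. First I would record the meaning of \eqref{formula}: writing $z$ for the right action of $\cA(\la_i)=\bC[z]$ on $M_i[t]=M_i\T\bC[t]$ (multiplication by $t$ in the $\bC[t]$--factor), formula \eqref{formula} says exactly that $xt^l$ acts as $x(t-z)^l$, i.e.\ it is the image of $xt^l$ under the substitution $t\mapsto t-z$ followed by the original $\fg[t]$--action on $M_i$. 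Since each $xt^l$ is primitive, this makes $M\mapsto M[t]$ a monoidal functor for the diagonal $\fg[t]$--action on the source and $\T_{\bC[z]}$ on the target: there is a canonical $\fg[t]$--$\bC[z]$--bimodule isomorphism $(M\T N)[t]\cong M[t]\T_{\bC[z]}N[t]$ matching cyclic vectors. Iterating and restricting to the $\U(\fg[t])$--span of the cyclic vector $v^{\T\ell}\T 1$ (which fills the whole target because $(M^{\odot\ell})[t]=R(M^{\odot\ell})$ is already cyclic over $\fg[t]$ alone) yields the one--factor identity
\[
(M^{\odot\ell})[t]\ \cong\ M[t]^{\odot\ell}_{\bC[z]}.
\]
Applying this factorwise rewrites the right--hand side as $R(M_1^{\odot\ell},\dots,M_k^{\odot\ell})=\bigodot_{i=1}^k(M_i^{\odot\ell})[t]\cong\bigodot_{i=1}^k M_i[t]^{\odot\ell}_{\bC[z_i]}$.

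Next I would build a $\fg[t]$--$\cA(\ula)$--bimodule surjection from the left--hand side onto this. Set $B=\bigotimes_{i=1}^k\cA(\la_i)\cong\bC[z_1,\dots,z_k]\supset\cA(\ula)$, so the ambient module $\bigotimes_i M_i[t]$ is an external $B$--module. The natural quotient $(-)^{\T_{\cA(\ula)}\ell}\twoheadrightarrow(-)^{\T_B\ell}$, applied to $R(M_1,\dots,M_k)\subset\bigotimes_i M_i[t]$ and followed by the reordering $(\boxtimes_i M_i[t])^{\T_B\ell}=\boxtimes_i\big(M_i[t]^{\T_{\bC[z_i]}\ell}\big)$ (valid for external tensor products over $B=\bigotimes_i\bC[z_i]$), sends the cyclic vector $w^{\T_{\cA(\ula)}\ell}$ to $\boxtimes_i(v_i^{\T\ell}\T 1)$. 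Restricting to $\U(\fg[t])$--spans gives a surjective bimodule map
\[
\Phi\colon R(M_1,\dots,M_k)^{\odot\ell}_{\cA(\ula)}\ \twoheadrightarrow\ \bigodot_{i=1}^k M_i[t]^{\odot\ell}_{\bC[z_i]}\ \cong\ R(M_1^{\odot\ell},\dots,M_k^{\odot\ell}).
\]

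Finally I would prove that $\Phi$ is injective. By Proposition~\ref{global demazure is projective} the module $R(M_1,\dots,M_k)$ is free over $\cA(\ula)$, hence $R(M_1,\dots,M_k)^{\T_{\cA(\ula)}\ell}$ is free and its submodule $R(M_1,\dots,M_k)^{\odot\ell}_{\cA(\ula)}$ is torsion--free over the domain $\cA(\ula)$. On the other hand $\ker\Phi$ lies in the kernel of $(-)^{\T_{\cA(\ula)}\ell}\to(-)^{\T_B\ell}$; since $B$ is finite over $\cA(\ula)$ and the two agree after inverting the discriminant (Remark~\ref{abuse}: $\bA^k\to\bA^{(\ula)}$ is finite and an isomorphism over the locus of pairwise distinct points), this kernel is supported on the diagonal locus and is therefore torsion. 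A torsion submodule of a torsion--free module is zero, so $\ker\Phi=0$ and $\Phi$ is an isomorphism; the rank count is consistent, since by \eqref{fiber of global module at generic point} and \cite[Proposition~1.4]{FeLo} both sides have the same generic fiber $\bigotimes_i M_i^{\odot\ell}(c_i)$ of dimension $\prod_i\dim M_i^{\odot\ell}$.

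The main obstacle is precisely this last step, namely controlling the discrepancy between the cyclic power taken over the small algebra $\cA(\ula)$ and the product of cyclic powers over the factor algebras $\bC[z_i]$. The surjection $\Phi$ is straightforward to write down, but it is not visibly injective, and its injectivity is forced only by combining the freeness of $R(M_1,\dots,M_k)$ over $\cA(\ula)$ with the finite birational comparison of $\cA(\ula)$ and $B$ provided by Remark~\ref{abuse}.
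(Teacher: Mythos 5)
Your first ingredient is sound and coincides with the paper's starting point: the isomorphism $M[t]\T_{\bC[z]}M[t]\cong(M\T M)[t]$ matching cyclic vectors is exactly the first step of the paper's proof. The gap is in the injectivity of $\Phi$, and it is twofold. First, you invoke Proposition~\ref{global demazure is projective} for the freeness of $R(M_1,\dots,M_k)$ over $\cA(\ula)$; but that proposition is proved in the paper \emph{using} Proposition~\ref{tensor product over A}, so this is circular, and moreover it only asserts freeness for the global Demazure modules $\bD(\ell,\ula)$, whereas the present statement concerns arbitrary cyclic graded $M_i$, for which freeness is not known (it is tied to the conjectural identification of the fiber at $0$ with the fusion product). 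Second, and more seriously, the claim that the kernel of $(-)^{\T_{\cA(\ula)}\ell}\to(-)^{\T_B\ell}$ is torsion over $\cA(\ula)$ is false. Remark~\ref{abuse} only says that $\bA^k\to\bA^{(\ula)}$ is finite; it is \emph{not} an isomorphism over the locus of pairwise distinct points, but a covering of degree $d>1$ as soon as two of the $\la_i$ coincide (e.g.\ for $\fg=\msl_2$, $\la_1=\la_2=\om$ one has $\cA(\ula)=\bC[z_1,z_2]^{S_2}\subset B=\bC[z_1,z_2]$, of degree $2$). A generic rank count makes the failure explicit: for a module generically free of rank $n$ over $B$, the $\ell$-th tensor power over $\cA(\ula)$ has generic $\cA(\ula)$-rank $(nd)^\ell$, while the power over $B$ has generic $\cA(\ula)$-rank $n^\ell d$; for $d>1$ and $\ell\ge 2$ the kernel therefore has positive generic rank and is not torsion. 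So even granting freeness, your argument does not establish that $\ker\Phi$ vanishes.

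The paper's proof avoids the comparison with $B=\bigotimes_i\bC[z_i]$ altogether. It upgrades the one-factor isomorphism directly to an isomorphism $\bigl(\bigotimes_{i=1}^kM_i[t]\bigr)^{\T_{\U(\h[t])}\ell}\cong\bigotimes_{i=1}^kM_i^{\T\ell}[t]$, where every tensor power is taken over the \emph{diagonal} copy of $\U(\h[t])$ (which on the cyclic submodule acts through $\cA(\ula)$, so ``formally there is no difference''), and then takes the $\U(\fg[t])$-envelopes of the cyclic vectors on both sides. In this route there is no discrepancy between two base rings to control, and no freeness or torsion input is required. If you want to keep your structure, the fix is to replace the passage through $\T_B$ by this diagonal identification; as written, the injectivity step fails.
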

	
	\begin{proof}
		Recall that the action of the highest weight algebra comes from the $\U(\h[t])$-action. In this proof, we consider global modules with different highest weight algebras. So, we use the notation $\T_{\U(\h[t])}$ instead of $\T_{\cA(\ula)}$, although formally there is no difference.
		
		We first consider an isomorphism 
		\begin{align*}
		M_i[t] \T_{\U(\h[t])} M_i[t] &\xrightarrow{\sim} (M_i \T M_i)[t], \\
		v_1t^{k_1} \T_{\U(\h[t])} v_2t^{k_2} &\mapsto (v_1 \T v_2) t^{k_1 + k_2}
		\end{align*}
		(one can easily check that it is bijective and $\g[t]$-equivariant).
		Then we extend it to an isomorphism
		\[
		\underbrace{\left(\otimes_{i = 1}^k M_i[t]\right)\otimes_{\U(\h[t])}\ldots
			\otimes_{\U(\h[t])}\left(\otimes_{i = 1}^k M_i[t]\right)}_\ell \xrightarrow{\sim}
		\bigotimes_{i = 1}^k M_i^{\T\ell}[t].
		\]
		Considering the $\g[t]$-envelopes of the tensor products of cyclic vectors in both sides, we
		obtain the desired isomorphism 
		\[R(M_1, \hdots, M_k)^{\odot\ell}_{\cA(\ula)} \simeq R(M_1^{\odot\ell}, \hdots, M_k^{\odot\ell}).
		\]
	\end{proof}
	
	\begin{cor} \label{algebra of duals R's}
		There is a natural structure of a graded $\cA(\ula)$-algebra on the space 
		\[
		\bigoplus_{\ell\ge 0} R(M_1^{\odot\ell}, \hdots, M_k^{\odot\ell})^\vee
		\]
		(we set $R(M_1^{\odot 0}, \hdots, M_k^{\odot 0})^\vee = \cA(\ula)$).
	\end{cor}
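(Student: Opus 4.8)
The plan is to exhibit $\bigoplus_{\ell\ge 0}R(M_1^{\odot\ell},\dots,M_k^{\odot\ell})$ as a graded coalgebra in the category of $\g[t]-\cA(\ula)$-bimodules and then to pass to the graded $\cA(\ula)$-dual, which is automatically a graded $\cA(\ula)$-algebra. This is the relative (over $\cA(\ula)$) analogue of the classical construction of the homogeneous coordinate ring $\bigoplus_{\ell\ge 0}D_{\ell,\la}^*$ of a spherical Schubert variety recalled in~\S\ref{affine and demazure}.

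Write $\cA=\cA(\ula)$ and $R=R(M_1,\dots,M_k)$, with cyclic vector $v$. By Proposition~\ref{tensor product over A} we may identify $R(M_1^{\odot\ell},\dots,M_k^{\odot\ell})\simeq R^{\odot\ell}_\cA=\U(\g[t]).v^{\T\ell}\subseteq R^{\T_\cA\ell}$, the $\ell$-th tensor power of $R$ over $\cA$. The first step is to produce, for all $\ell,m\ge 0$, a comultiplication
\[
\Delta_{\ell,m}\colon R^{\odot(\ell+m)}_\cA\longrightarrow R^{\odot\ell}_\cA\T_\cA R^{\odot m}_\cA .
\]
As the $\g[t]$- and $\cA$-actions commute, the target is again a $\g[t]-\cA$-bimodule (via the coproduct of $\U(\g[t])$), and it maps canonically to $R^{\T_\cA(\ell+m)}=R^{\T_\cA\ell}\T_\cA R^{\T_\cA m}$. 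Assuming this canonical map is injective, I identify $R^{\odot\ell}_\cA\T_\cA R^{\odot m}_\cA$ with a sub-bimodule of $R^{\T_\cA(\ell+m)}$; it contains $v^{\T\ell}\T v^{\T m}=v^{\T(\ell+m)}$ and is $\U(\g[t])$-stable, hence contains the cyclic span $R^{\odot(\ell+m)}_\cA$. Thus $\Delta_{\ell,m}$ is simply the resulting inclusion, in exact parallel with $D_{\ell+m,\la}=\U(\fb^a).u^{\T(\ell+m)}\subseteq D_{\ell,\la}\T D_{m,\la}$ in the Demazure case.

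Given the $\Delta_{\ell,m}$, I would verify that $\Delta=\sum_{\ell,m}\Delta_{\ell,m}$ makes $\bigoplus_\ell R^{\odot\ell}_\cA$ a coassociative (and, since $\U(\g[t])$ is cocommutative, cocommutative) coalgebra, with counit the projection onto the degree-zero summand $R^{\odot 0}_\cA=\cA$; coassociativity is inherited from coassociativity of the coproduct of $\U(\g[t])$ and associativity of $\T_\cA$. Applying $(-)^\vee=\Hom_\cA(-,\cA)$, together with its lax monoidal structure maps $M^\vee\T_\cA N^\vee\to(M\T_\cA N)^\vee$, converts each $\Delta_{\ell,m}$ into a product
\[
R(M_1^{\odot\ell},\dots,M_k^{\odot\ell})^\vee\T_\cA R(M_1^{\odot m},\dots,M_k^{\odot m})^\vee\longrightarrow R(M_1^{\odot(\ell+m)},\dots,M_k^{\odot(\ell+m)})^\vee .
\]
Associativity, $\cA$-bilinearity, commutativity, and the fact that the degree-zero summand $\cA$ is the unit then all follow formally from the (co)associativity, (co)commutativity and (co)unit properties just established.

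The step I expect to be the main obstacle is the injectivity of the comparison map $R^{\odot\ell}_\cA\T_\cA R^{\odot m}_\cA\to R^{\T_\cA(\ell+m)}$, on which the very definition of $\Delta_{\ell,m}$ rests. Over the ground field $\bC$ the analogous map is automatically injective, which is why the classical Schubert coordinate ring presents no difficulty; over the base ring $\cA$ it requires flatness of the tensor factors. For the global Demazure modules, where $M_i=D_{\ell,\la_i}$, this is exactly what Proposition~\ref{global demazure is projective} provides: freeness of $\bD(\ell,\ula)$ over $\cA$ makes all the $R^{\odot\ell}_\cA$ and the powers $R^{\T_\cA\ell}$ free, hence flat, and the comparison maps injective. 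This flatness input is the one substantive ingredient beyond the formal coalgebra/duality argument.
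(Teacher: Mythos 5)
Your construction is essentially the paper's: the paper defines the product as the dual of the natural embedding $R(M_1^{\odot(\ell_1+\ell_2)},\dots,M_k^{\odot(\ell_1+\ell_2)})\hookrightarrow R(M_1^{\odot\ell_1},\dots,M_k^{\odot\ell_1})\T_{\cA(\ula)}R(M_1^{\odot\ell_2},\dots,M_k^{\odot\ell_2})$, which is exactly your comultiplication $\Delta_{\ell,m}$ followed by the lax monoidal structure of $\Hom_{\cA(\ula)}(-,\cA(\ula))$. The injectivity point you flag is simply asserted (as a $\hookrightarrow$) in the paper's one-line proof, so your treatment is, if anything, more careful than the original.
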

	
	\begin{proof}
		The multiplication structure is given by the dual of the map 
		\[
		R(M_1^{\odot\ell_1 +\ell_2}, \hdots, M_k^{\odot\ell_1 +\ell_2}) \hookrightarrow 
		R(M_1^{\odot\ell_1}, \hdots, M_k^{\odot\ell_1}) \T_{\cA(\ula)} R(M_1^{\odot\ell_2}, \hdots, M_k^{\odot\ell_2}).
		\]
	\end{proof}
	
	To make a link to the Beilinson-Drinfeld setup we consider the global modules with all $M$'s
	being Demazure modules of the same 
	level $\ell$ and nonzero highest weights. Let us recall the notation
	\begin{gather*}
	\bD(\ell, \ula) = R(D_{\ell, \la_1}, \hdots, D_{\ell, \la_k}),\\
	\bD_{\ell,\la} = \bD(\ell,  \underbrace{\om_1, \hdots , \om_1}_{m_1}, \hdots, \underbrace{\om_r, \hdots , \om_r}_{m_r}),
	\end{gather*}
	where $\la=\sum_{i=1}^k \la_i=\sum_{j=1}^r m_j\om_j$.
	Recall also that $D_{1, \la}^{\odot\ell} \simeq D_{\ell, \la}$. We get the following version
	of~Corollary~\ref{algebra of duals R's}:
	
	\begin{cor}
		There is a natural structure of a graded $\cA(\ula)$-algebra on the space 
		\[
		\bigoplus_{\ell\ge 0}  \bD(\ell, \ula)^\vee.
		\]
		In particular, if all $\la_i$ are fundamental, we have an $\cA_\la$-algebra
		\[
		\bD_{\la}^\vee = \bigoplus_{\ell\ge 0} \bD_{\ell,\la}^\vee.
		\]
	\end{cor}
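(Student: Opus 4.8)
The plan is to obtain this as a direct specialization of Corollary~\ref{algebra of duals R's}, applied to level-one Demazure modules. First I would set $M_i = D_{1,\la_i}$ for $i=1,\dots,k$. Each such $M_i$ is a cyclic graded $\fg[t]$-module whose cyclic weight vector (of weight $\iota(\la_i)$) is annihilated by $t\fh[t]$, so these modules satisfy exactly the hypotheses under which the global modules $R(M_1,\dots,M_k)$ and Corollary~\ref{algebra of duals R's} were set up in~\S\ref{curr alg mod}.

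Next I would invoke the identity $D_{1,\la}^{\odot\ell}\simeq D_{\ell,\la}$ recalled just above the statement. For the chosen modules this gives $M_i^{\odot\ell}\simeq D_{\ell,\la_i}$, whence
\[
R(M_1^{\odot\ell},\dots,M_k^{\odot\ell})\simeq R(D_{\ell,\la_1},\dots,D_{\ell,\la_k}) = \bD(\ell,\ula)
\]
for every $\ell\ge 0$, the case $\ell=0$ matching the convention $R(M_1^{\odot 0},\dots,M_k^{\odot 0})^\vee=\cA(\ula)=\bD(0,\ula)^\vee$. Passing to $\cA(\ula)$-duals term by term identifies $\bigoplus_{\ell\ge 0}\bD(\ell,\ula)^\vee$ with $\bigoplus_{\ell\ge 0}R(M_1^{\odot\ell},\dots,M_k^{\odot\ell})^\vee$. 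The $\cA(\ula)$-algebra structure produced by Corollary~\ref{algebra of duals R's}, whose multiplication is dual to the embedding $R(M_1^{\odot\ell_1+\ell_2},\dots)\hookrightarrow R(M_1^{\odot\ell_1},\dots)\T_{\cA(\ula)}R(M_1^{\odot\ell_2},\dots)$ coming from Proposition~\ref{tensor product over A}, then transports along this identification to $\bigoplus_{\ell\ge 0}\bD(\ell,\ula)^\vee$.

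For the ``in particular'' clause I would specialize further to the case where every $\la_i$ is a fundamental coweight. Grouping the repeated fundamental coweights by color gives $\bD(\ell,\ula)=\bD_{\ell,\la}$ for $\la=\sum_i\la_i=\sum_j m_j\om_j$, while the definitions in~\S\ref{curr alg mod} identify $\cA(\ula)$ with $\cA_\la\cong\bC[z_1,\dots,z_N]^{S_\la}$. The general statement then reads off the desired $\cA_\la$-algebra structure on $\bD_\la^\vee=\bigoplus_{\ell\ge 0}\bD_{\ell,\la}^\vee$.

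Since the result is a pure specialization, there is no genuine obstacle; the only point requiring care is that the isomorphisms $R(M_i^{\odot\ell})\simeq\bD(\ell,\ula)$ be compatible with the multiplication maps of Corollary~\ref{algebra of duals R's}, i.e.\ that $D_{1,\la}^{\odot\ell}\simeq D_{\ell,\la}$ intertwines the embeddings for varying $\ell$. This follows from the functoriality built into Proposition~\ref{tensor product over A}, so the transported structure is an honest graded algebra structure and not merely an identification of underlying graded $\cA(\ula)$-modules.
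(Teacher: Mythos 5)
Your proposal is correct and follows exactly the paper's route: the paper obtains this corollary as an immediate specialization of Corollary~\ref{algebra of duals R's} to $M_i=D_{1,\la_i}$ via the identity $D_{1,\la}^{\odot\ell}\simeq D_{\ell,\la}$, which is precisely what you do (you merely spell out the hypothesis check and the compatibility of the multiplication maps, which the paper leaves implicit).
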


	\begin{rem}
		{\em Note that by construction this algebra is generated by its first homogeneous component, or,
			in other words, one has a surjection from the symmetric algebra:
			\[
			\on{Sym}^\bullet_{\cA(\ula)} \bD(1, \ula)^\vee 
			\twoheadrightarrow \bigoplus_{\ell\ge 0}\bD(\ell, \ula)^\vee.
			\]
			This means that the $\BA^{(\ula)}$-scheme \[\Proj (\bigoplus_{\ell\ge 0} \bD(\ell, \ula)^\vee)\] 
			is a closed subscheme of the projective space \[
			\bP_{\cA(\ula)}(\bD(1, \ula)) = \Proj \Big( \on{Sym}^\bullet_{\cA(\ula)} \bD(1, \ula)^\vee) \Big).\]
		} 
	\end{rem}
	
	\begin{rem}
		{\em Assume that $\fg$ is simply laced, all $\la_i$ are fundamental, $\la=\sum_{i=1}^k \la_i$.
			Then $\bD(1, \ula)\simeq \bW_\la$ is the global
			Weyl module. It is proved in \cite{Kato2} that the projective spectrum of the algebra $\bigoplus_{\la\in P_+} \bW_\la^*$ is isomorphic to
			the (formal version of) semi-infinite flag variety
                        (see also \cite{Kato1,KNS,BF1,BF2,BF3,FiMi,FeMa1}). There are two important differences
			between the algebras  $\bigoplus_{\la\in P_+} \bW_\la^*$ and $\bigoplus_{\ell\ge 0}\bD(\ell, \ula)^\vee$. First, the sum
			in the first algebra runs over the dominant integral weights, while in the second case the summation is performed over the nonnegative
			integers. Second, the dual in the first algebra is taken with respect to the ground field, while in the second algebra one 
			considers the duals with respect to the highest weight algebra.}
	\end{rem}

	Now we will prove that an arbitrary global Demazure module $\bD(\ell, \ula)$ is free over $\cA(\ula)$. We start with the simply-laced case.
	
	\begin{lem} \label{simply-laced global Demazure is free}
		Let $\g$ be of simply-laced type, $\ula = (\la_1, \hdots, \la_k)$, and $\la = \la_1 + \hdots + \la_k$. Then
		\[
		\bD(\ell, \ula) \T_{\cA(\ula)} \bC_0 \simeq D_{\ell, \la}.
		\]
	\end{lem}
	
	\begin{proof}
		Note that this Lemma was already proved in \cite[Proposition 3.2]{DF} in the case of all $\la_i$ being fundamental coweights. Thus, fundamental Demazure modules satisfy the condition of Proposition \ref{fusion is associative}. Using the associativity (Proposition \ref{fusion is associative} \textup{(c)}), we obtain the Lemma for arbitrary coweights.
	\end{proof}
	
	We proceed to an arbitrary type.
	
	\begin{prop} \label{global demazure is projective}
		\textup{(a)} One has an isomorphism of $\fg[t]$-modules
		\[
		\bD(\ell, \ula) \T_{\cA(\ula)}\bC_0\simeq D_{\ell,\la_1+\dots+\la_k}.
		\]
		\textup{(b)} The global Demazure module $\bD(\ell, \ula)$ is free over $\cA(\ula)$.
	\end{prop}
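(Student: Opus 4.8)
The plan is to establish part (a) first and then to deduce (b) from the following elementary criterion: a finitely generated graded torsion-free module $M$ over a graded connected domain $\cA$ is free provided its minimal number of homogeneous generators equals its generic rank $\rho$. Indeed, a minimal generating set gives a surjection $\cA^{\oplus\rho}\twoheadrightarrow M$ whose kernel has generic rank $0$, hence is torsion, hence vanishes since $\cA^{\oplus\rho}$ is torsion-free; so $M\cong\cA^{\oplus\rho}$. By graded Nakayama the minimal number of generators equals $\dim_\bC(M\T_\cA\bC_0)$, so the whole problem reduces to computing the fiber at the origin, which is exactly (a).

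For (a), I would first invoke Proposition \ref{tensor product over A} together with $D_{1,\la_i}^{\odot\ell}\simeq D_{\ell,\la_i}$ to identify $\bD(\ell,\ula)=R(D_{\ell,\la_1},\dots,D_{\ell,\la_k})$ with the cyclic power $\bD(1,\ula)^{\odot\ell}_{\cA(\ula)}\subseteq\bD(1,\ula)^{\otimes_{\cA(\ula)}\ell}$. Tensoring this inclusion with $\bC_0$ and using Lemma \ref{fiber of level 1 demazure} (so that $\bD(1,\ula)\T_{\cA(\ula)}\bC_0\simeq D_{1,\la}$ with generator $\bar v$), the cyclic vector maps to $\bar v^{\T\ell}$, and since the fiber is a cyclic $\fg[t]$-module its $\fg[t]$-span yields a surjection
\[
\bD(\ell,\ula)\T_{\cA(\ula)}\bC_0\twoheadrightarrow \U(\fg[t]).\bar v^{\T\ell}=D_{1,\la}^{\odot\ell}=D_{\ell,\la},
\]
so that $\dim_\bC(\bD(\ell,\ula)\T_{\cA(\ula)}\bC_0)\ge\dim D_{\ell,\la}$. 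For the reverse inequality I would note that this fiber is generated by a vector of weight $\ell\iota(\la)$ killed by $\fn_+[t]$ and by $t\h[t]$; it therefore suffices to verify the remaining Garland relations $(f_\be t^s)^{k_\be+1}=0$ from the presentation of $D_{\ell,\la}$. This is carried out exactly as in Lemma \ref{fiber of level 1 demazure}: for a positive root $\be^\svee$ one restricts to $\msl_2^\be$, embeds the appropriate level-$\ell$ $\widehat{\msl_2^\be}$-Demazure modules into the $D_{\ell,\la_i}$, and reads off the relations in the $\msl_2^\be$-global module specialized at $0$. Hence the fiber is also a quotient of $D_{\ell,\la}$, and the two surjections force an isomorphism.

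For (b), $\cA(\ula)$ is a graded connected domain (a homogeneous subalgebra of $\bC[z_1,\dots,z_k]$), so it remains to check the hypotheses of the criterion. The module $\bD(\ell,\ula)$ embeds into $\bigotimes_{i=1}^k D_{\ell,\la_i}[t]$, which is free over $\bigotimes_i\cA(\la_i)\cong\bC[z_1,\dots,z_k]$ (each factor being free over $\cA(\la_i)$) and hence torsion-free over the subalgebra $\cA(\ula)$; the same ambient module is finite over $\cA(\ula)$ because $\bA^k\to\bA^{(\ula)}$ is finite (Remark \ref{abuse}), so $\bD(\ell,\ula)$ is finitely generated over the Noetherian ring $\cA(\ula)$. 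Its generic rank is $\prod_i\dim D_{\ell,\la_i}$ by \eqref{fiber of global module at generic point}, while $\dim_\bC(\bD(\ell,\ula)\T_{\cA(\ula)}\bC_0)=\dim D_{\ell,\la}$ by (a). These agree by the fusion theorem for Demazure modules $D_{\ell,\la_1}\ast\dots\ast D_{\ell,\la_k}\simeq D_{\ell,\la}$ (which yields $\dim D_{\ell,\la}=\prod_i\dim D_{\ell,\la_i}$), and the criterion then gives freeness.

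The crux is the upper bound in (a). By upper semicontinuity the origin fiber is a priori at least as large as the generic one, so flatness must genuinely be proved rather than inferred; the only available handle is the explicit presentation of $D_{\ell,\la}$, and the real content is the $\msl_2^\be$-reduction that places its Garland relations inside $\bD(\ell,\ula)\T_{\cA(\ula)}\bC_0$. Once this is in place, the cyclic-power identity, torsion-freeness, finite generation, and the final descent are all either formal or quoted.
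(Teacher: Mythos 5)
Your part (b) is correct, and your freeness criterion (torsion-freeness plus the equality of the generic rank with the minimal number of generators) is a perfectly good substitute for the paper's route, which instead observes that all closed fibres have the same dimension, concludes projectivity, and applies graded Nakayama. The lower bound in (a) --- the surjection $\bD(\ell,\ula)\T_{\cA(\ula)}\bC_0\twoheadrightarrow D_{\ell,\la}$ obtained from Proposition \ref{tensor product over A} and Lemma \ref{fiber of level 1 demazure} --- also matches the paper. The divergence, and the gap, is in your upper bound. The paper never touches the defining relations of $D_{\ell,\la}$ for $\ell>1$: it identifies $\bD(\ell,\ula)\T_{\cA(\ula)}\bC_0\simeq\bD(1,\ula)^{\odot\ell}_{\cA(\ula)}\T_{\cA(\ula)}\bC_0$ with the $\U(\fg[t])$-envelope of $\bar v^{\T\ell}$ inside $\bigl(\bD(1,\ula)\T_{\cA(\ula)}\bC_0\bigr)^{\T\ell}\simeq D_{1,\la}^{\T\ell}$, i.e.\ with $D_{1,\la}^{\odot\ell}\simeq D_{\ell,\la}$. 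The entire level-$\ell$ statement is thus reduced formally to the level-one Lemma \ref{fiber of level 1 demazure}, and relations are only ever checked at level one.

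Your proposed replacement --- rerun the $\msl_2^\be$-reduction of Lemma \ref{fiber of level 1 demazure} at level $\ell$ --- does not go through ``exactly as'' in that lemma. Beyond requiring the Garland-type presentation of $D_{\ell,\la}$ for all $\ell$ and all $\fg$ (a genuinely stronger input than the level-one presentation quoted from \cite{FL2,J}; in general it is due to Naoi \cite{Naoi}), the decisive step of that reduction is the identification of the fibre at $0$ of the $\msl_2^\be$-global module $R(M(\iota(\la_1)),\dots,M(\iota(\la_k)))$ with (a quotient of) the corresponding $\widehat{\msl_2^\be}$-Demazure module, which is what allows the relations to be ``read off''. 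At level $\ell$ the constituents $M(\iota(\la_i))$ are higher-level $\widehat{\msl_2^\be}$-Demazure modules with non-fundamental highest weights, so this identification is precisely the $\msl_2$ instance of part (a) at higher level with arbitrary weights --- the very statement being proved. Since for $\fg=\msl_2$ the reduction to $\msl_2^\be$ is vacuous, your argument is circular unless the higher-level $\msl_2$ case is first established independently; and the natural way to do that is exactly the paper's cyclic-power reduction to level one, which renders the level-$\ell$ relation-checking unnecessary in the first place.
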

	Our proof uses ideas of \cite[Theorem 8]{FL2}.
	\begin{proof}
		We reduce the general case to the case of $\msl_2$, which is simply laced and hence follows from Lemma \ref{simply-laced global Demazure is free}.
		
		As we know (recall \eqref{fiber surjects to fusion}), $\dim(\bD(\ell, \ula) \T_{\cA(\ula)} \bC_0) \geq \dim(D_{\ell, \la})$, so it suffices to
		construct a surjection
		\begin{equation} \label{surjection demazure to fiber}
		\bD(\ell, \ula) \T_{\cA(\ula)} \bC_0 \twoheadleftarrow D_{\ell, \la}.
		\end{equation}
		As it was shown in \cite{FL2,J}, the defining relations of $D_{\ell, \la}$ are
		\begin{align*}
		\fn^+[t].v &= 0, & h.v &= \ell \langle\iota(\la),h\rangle v, & t\h[t].v &= 0, &
		(f_\beta t^s)^{k_\beta + 1}.v &= 0.
		\end{align*}
		Here $h \in \h$, $s \in \bZ_{\geq 0}$, and $f_\beta$ is the Chevalley generator corresponding to
		a positive root $\beta\,^\svee$. Finally,
		$k_\beta=\ell \max\{ 0,\langle\Lambda^\vee_0 +\iota(\lambda),-\beta + s\frac{(\beta,\beta)}{2}K\rangle \}$.
		
		The first three relations obviously hold in the left-hand side of \eqref{surjection demazure to fiber}, so it remains to show that the last one also does. Consider the $\msl_2$-triple $\msl_2^\beta$ corresponding to $\beta\,^\svee$.
		
		It was shown in \cite[Lemma 7]{FL2} that there is an $\msl_2^\beta[t]$-submodule of $D_{1, \la_i}$,
		that is isomorphic to $\widehat{\msl_2^\beta}$ Demazure module $D_{\epsilon, m\omega^\svee}$,
		where $\epsilon = (\beta, \beta)/2$, $m=\langle\iota(\la),\beta\rangle/\epsilon$, and $\om^\svee$
		is the fundamental weight of $\msl_2^\beta$. 
		It follows that there is an $\msl_2^\beta[t]$-submodule of $D_{\ell, \la_i}$, that is isomorphic to $\widehat{\msl_2^\beta}$ Demazure module $D_{\ell \epsilon, m\omega^\svee}$. Denote it by $M(\ell, \iota(\la_i))$.
		
		This induces an embedding
		$M(\ell, \iota(\la_i))[t] \hk D_{\ell, \la_i}[t]$ and hence, denoting the 
		highest vector of $D_{\ell, \la_i}$ by $v_i$ we have
		\begin{multline*}
		R(M(\ell, \iota(\la_1)), \hdots, M(\ell, \iota(\la_k))) \simeq \U(\msl_2^\beta[t]).\T_{i = 1}^k v_i \hk \\
		\U(\g[t]).\T_{i = 1}^k v_i \simeq R(D_{\ell, \la_1}, \hdots D_{\ell, \la_k}) \simeq \bD(\ell, \ula).
		\end{multline*}
		Thereby, one has a map
		\begin{equation}\label{map}
		R( M(\ell, \iota(\la_1)), \hdots, M(\ell,\iota(\la_k))) \T_{\cA} \bC_0 \to \bD(\ell, \ula) \T_{\cA(\ula)} \bC_0,
		\end{equation}
		where $\cA$ is the highest weight algebra of the global $\msl_2^\beta[t]$ Demazure module 
		$R(M(\ell, \iota(\la_1)), \hdots, M(\ell, \iota(\la_k)))$ and the map
		\eqref{map} is induced by the natural inclusion $\cA\subset \cA(\ula)$.
		Now the left hand side of \eqref{map} is isomorphic to $M(\ell, \iota(\la))$,
		since we are in the simply laced case $\fg=\msl_2^\beta$. 
		The required relations $(f_\beta t^s)^{k_\beta + 1}.v = 0$, $s\ge 0$,
		hold in this module, and hence they hold in $\bD(\ell, \ula) \T_{\cA(\ula)} \bC_0$ as well.

		
		
		
		
The end of the proof repeats the one of~Proposition~\ref{sections under symmetrisation}:
                $\bD(\ell, \ula)$ is projective over $\cA(\ula)$ because its fibers have the same dimension at
		every closed point. Now since both $\bD(\ell, \ula)$ and $\cA(\ula)$ are non-negatively
		graded, and the degree~0 part of $\cA(\ula)$ is $\BC$, we conclude by the graded Nakayama lemma
		that $\bD(\ell, \ula)$ is free over $\cA(\ula)$.
	\end{proof}

	
	
	

	
	Now we describe one more relation between global modules that will be used later in the paper:

	\begin{prop} \label{fat cartan component}
		One has an isomorphism of $\fg[t]$-modules:
		\[
		\bD(\ell, \ula) \T_{\cA(\ula)} \bigotimes_{i = 1}^k \cA(\ell\la_i)  \simeq 
		\U(\fg[t]). \Bigl( \bigotimes_{i = 1}^k D_{\ell,\la_i}[t]_{\ell\iota(\la_i)}  \Bigr) \subset \bigotimes_{i = 1}^k D_{\ell, \la_i}[t],
		\]
		where $D_{\ell,\la_i}[t]_{\ell\iota(\la_i)}$
		denotes the highest weight part of $D_{\ell, \la_i}[t]$.
	\end{prop}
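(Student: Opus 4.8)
The plan is to realize both sides as $\fg[t]$-submodules of $\bigotimes_{i=1}^k D_{\ell,\la_i}[t]$ and to match them by a base-change together with a generic-fiber argument. Write $R=\bigotimes_{i=1}^k\cA(\ell\la_i)\simeq\bC[z_1,\dots,z_k]$, recall the embedding $\cA(\ula)\hk R$ from~\S\ref{curr alg mod}, and let $v_i$ be the highest weight vector of $D_{\ell,\la_i}$, so that $v=v_1\T\cdots\T v_k$ is the cyclic vector of $\bD(\ell,\ula)=\U(\fg[t]).v\subset\bigotimes_i D_{\ell,\la_i}[t]$. The tensor product $\bigotimes_i D_{\ell,\la_i}[t]$ carries an action of $R$ (the $i$-th factor $\cA(\ell\la_i)$ acting on the $i$-th tensor factor) which commutes with the diagonal $\fg[t]$-action, and whose restriction along $\cA(\ula)\hk R$ is precisely the $\cA(\ula)$-module structure on $\bD(\ell,\ula)$. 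Hence the inclusion $j\colon\bD(\ell,\ula)\hk\bigotimes_i D_{\ell,\la_i}[t]$ extends, by the universal property of base change, to a $\fg[t]$-$R$-linear map
\[
\phi\colon\bD(\ell,\ula)\T_{\cA(\ula)}R\to\bigotimes_{i=1}^k D_{\ell,\la_i}[t],\qquad x\T a\mapsto a\cdot j(x).
\]

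First I would compute the image of $\phi$. Since the $R$-action commutes with $\fg[t]$, one has $\on{im}\phi=R\cdot\U(\fg[t]).v=\U(\fg[t]).(R\cdot v)$. For each $i$ the factor $\cA(\ell\la_i)\simeq\bC[z_i]$ acts freely on $D_{\ell,\la_i}[t]$, and on the highest weight vector it fills up the whole highest weight part: $\cA(\ell\la_i)\cdot v_i=v_i\T\bC[t]=D_{\ell,\la_i}[t]_{\ell\iota(\la_i)}$, because the weight $\ell\iota(\la_i)$ space of $D_{\ell,\la_i}$ is one-dimensional (spanned by $v_i$) and $z_i$ raises the $t$-degree by one. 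Therefore $R\cdot v=\bigotimes_i D_{\ell,\la_i}[t]_{\ell\iota(\la_i)}$ and
\[
\on{im}\phi=\U(\fg[t]).\Bigl(\bigotimes_{i=1}^k D_{\ell,\la_i}[t]_{\ell\iota(\la_i)}\Bigr),
\]
which is exactly the right-hand side. Thus $\phi$ is surjective onto the claimed submodule, and it remains to prove that $\phi$ is injective.

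For injectivity I would combine freeness with a generic-fiber computation. By Proposition~\ref{global demazure is projective} the module $\bD(\ell,\ula)$ is free over $\cA(\ula)$, so the source $\bD(\ell,\ula)\T_{\cA(\ula)}R$ is free, hence torsion-free, over $R\simeq\bC[z_1,\dots,z_k]$; the same proposition with $k=1$ shows each $D_{\ell,\la_i}[t]$ is free over $\cA(\ell\la_i)$, so the target $\bigotimes_i D_{\ell,\la_i}[t]$ is free over $R$. Fixing an $\fh$-weight $\mu$, the weight-$\mu$ components of source and target are finitely generated free $R$-modules, so $\phi$ restricted to this component is injective as soon as it is injective after passing to the fraction field of $R$, that is, as soon as it is injective over a generic closed point $\bc=(c_1,\dots,c_k)$ with pairwise distinct coordinates. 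Over such a point the target specializes to $\bigotimes_i\bigl(D_{\ell,\la_i}[t]\T_{\cA(\ell\la_i)}\bC_{c_i}\bigr)=\bigotimes_i D_{\ell,\la_i}(c_i)$, while the source specializes to $\bD(\ell,\ula)\T_{\cA(\ula)}\bC_\bc$.

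The crucial input is then the identification $\bD(\ell,\ula)\T_{\cA(\ula)}\bC_\bc\simeq\bigotimes_i D_{\ell,\la_i}(c_i)$ for distinct $c_i$, realized through $j$. This follows from the Proposition after~\eqref{fiber surjects to fusion}, whose hypothesis (the fiber at the origin being the fusion product) holds for Demazure modules since $D_{\ell,\la_1}\ast\cdots\ast D_{\ell,\la_k}\simeq D_{\ell,\la}$, consistent with Proposition~\ref{global demazure is projective}(a). Consequently $\phi\T\bC_\bc$ is the tautological identification of $\bigotimes_i D_{\ell,\la_i}(c_i)$ with itself, in particular an isomorphism, hence injective on every weight component. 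This forces $\phi$ to be injective, and together with the surjectivity established above gives the asserted isomorphism. I expect the main obstacle to be exactly this last step: controlling the generic fiber rests on the fusion-product description of $D_{\ell,\la}$ and on the freeness of $\bD(\ell,\ula)$, rather than on any direct manipulation of the defining relations.
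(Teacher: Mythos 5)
Your proposal is correct and follows essentially the same route as the paper: you construct the same base-change map $\phi$ into $\bigotimes_{i=1}^k D_{\ell,\la_i}[t]$, and you prove injectivity exactly as the paper does, by comparing fibers at points with pairwise distinct coordinates (where both sides become $\bigotimes_i D_{\ell,\la_i}(c_i)$) and using torsion-freeness to pass from generic injectivity to injectivity. The only difference is that you spell out two steps the paper leaves implicit — the identification of the image with $\U(\fg[t]).\bigl(\bigotimes_i D_{\ell,\la_i}[t]_{\ell\iota(\la_i)}\bigr)$ via $R\cdot v$, and the freeness argument behind ``injective on an open set of fibers implies injective'' — which is a welcome clarification rather than a new idea.
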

	
	Note that each algebra $\cA(\ell\la_i)$ is isomorphic to the algebra of polynomials in one variable.
	We write $\cA(\ell\la_i)$ 
	(as opposed to just $\bC[z_i]$) to point out that these algebras come as the highest weight spaces
	of the modules $D_{\ell,\la_i}[t]$. 
	Further in the paper in the Beilinson-Drinfeld context we use the notation
	$\bC[\bA^{(\ula)}] \subset \bC[\bA^k]$ instead of 
	$\cA(\ula) \subset \bigotimes_{i=1}^k \cA(\ell\la_i)$, although it is the same, as explained
	in~Subsection \ref{high wei alg}.
	
	\begin{proof}
		Let $v$ be the cyclic vector of $\bD(\ell, \ula)$ and let $v_i$ be the cyclic vector of $D_{\ell, \la_i}[t]$.
		We define the desired morphism by setting:
		\begin{align*}
		\phi\colon \bD(\ell, \ula) \T_{\cA(\ula)} \bigotimes_{i = 1}^k \cA(\ell\la_i) &\rightarrow \bigotimes_{i = 1}^k D_{\ell, \la_i}[t], \\
		u.v \T_{\cA(\ula)} (h_1t^{s_1} \T \hdots \T h_nt^{s_k}) &\mapsto u(h_1t^{s_1} v_1 \T \hdots \T h_nt^{s_k} v_k),
		\end{align*}
		for $u \in  \U(\g[t])$.
		
		It is well defined because $\cA(\ell\la_i)$ acts on $D_{\ell, \la_i}[t]$, and hence $\bigotimes_{i = 1}^k \cA(\ell\la_i)$ acts on 
		$\bigotimes_{i = 1}^k D_{\ell, \la_i}[t]$ commuting with the $\g[t]$-action.
		
		It remains to prove injectivity. Consider both sides as $\bigotimes_{i = 1}^k \cA(\ell\la_i)$-modules. 
		Fibers of both sides at any point $\bc = (c_1, \hdots, c_k)$ with $c_i \neq c_j$ are $\bigotimes_{i = 1}^k D_{\ell, \la_i}(c_i)$. 
		Thereby, $\phi$ is injective on fibers in an open subset, and hence injective.
	\end{proof}
	
	\begin{example}
		{\em Let $\fg=\msl_2$, $k=2$, $\la_1=\la_2=\om$. Then one has two embeddings:
			\begin{gather*}
			\bW_{2\om} \hookrightarrow \bW_\om \T \bW_\om,\\
			\bW_{2\om} \T_{\bC[z_1, z_2]^{S_2}} \bC[z_1, z_2] \hookrightarrow \bW_\om \T \bW_\om.
			\end{gather*}
			The image of the first embedding (the special case of Kato's theorem \cite[Corollary 3.5]{Kato1}) is the $\U(\g[t])$-envelope of the tensor product 
			of the highest vectors, while the image of the second embedding is the $\U(\g[t])$-envelope of the tensor product of the highest weight components.}
	\end{example}
	
	\begin{rem} \label{character of fat demazure}
		{\em Due to Proposition \ref{global demazure is projective}, $\bD(\ell, \ula)$ is free over $\cA(\ula)$. 
			Hence, $\bD(\ell, \ula) \T_{\cA(\ula)} \bigotimes_{i = 1}^k \cA(\ell\la_i)$ is free over
			$\bigotimes_{i = 1}^k \cA(\ell\la_i)$.  
			In particular, this implies:}
		\[
		\ch_q(\bD(\ell, \ula) \T_{\cA(\ula)} \bigotimes_{i = 1}^k \cA(\ell\la_i)) = \ch_q D_{\ell, \la_1 + \hdots + \la_k} \times (1 - q)^{-k}.
		\]
	\end{rem}
	
	\begin{rem}
		{\em In fact, the proof of Proposition \ref{fat cartan component} hold for arbitrary global modules $R(M_1, \hdots, M_n)$ without any changes. The isomorphism in the general case is of the form 
			\[
			R(M_1,\hdots ,M_k) \T_{\cA(\ula)} \bigotimes_{i = 1}^k \cA(\la_i)  \simeq \U(\fg[t]) . \Bigl( \bigotimes_{i = 1}^k \U(\h[t]) v_i  \Bigr) 
			\subset \bigotimes_{i = 1}^k R(M_i).
			\]}
	\end{rem}

	\section{Global Demazure modules and BD Schubert varieties}\label{GDm and BD}
	\subsection{Sections of the determinant line bundle}
	The goal of this section is to identify the global Demazure modules $\BD(\ell,\ula)$ with the $\cA(\ula)$-dual
	of the space of sections $H^0(\ol\Gr{}^{(\ula)},\cL^{\T\ell})$ (we note that the higher cohomology
	$H^{>0}(\ol\Gr{}^{(\ula)},\cL^{\T\ell})$ vanish: as in the proof
	of~Proposition~\ref{sections under symmetrisation}, it follows from the flatness of $\pi_{(\ula)}$
	and the fact that the restriction of $\CL^{\otimes\ell}$ to any fiber of $\pi_{(\ula)}$ is very
	ample).
	To this end, we first establish an isomorphism 
	\[
	H^0(\ol\Gr{}^{\ula},\cL^{\T\ell})^\vee\simeq \BD(\ell,\ula)\T_{\cA(\ula)} \bC[\bA^k],
	\]
	where $\BA^k=\BA^{(\lambda_1)}\times\ldots\times\BA^{(\lambda_k)}$ and the notation $M^\vee$ stands for
	the $\bC[\bA^k]$-dual module to a $\bC[\bA^k]$-module $M$.
	In order to compare these two spaces we make the following observation.
	
	\begin{lem}
		\label{above}
		There is a homomorphism of Lie groups $$G^\sic[t]\to \Gamma(\bA^k,\uG(k)),$$ where
		$\Gamma(\bA^k,\uG(k))$ is the group of sections of the group scheme $\uG(k)$ over $\BA^k$.
	\end{lem}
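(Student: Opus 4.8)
The plan is to realize both sides as groups of $A$-points of $G^\sic$ for suitable commutative $\bC$-algebras $A$, and then to obtain the homomorphism from a single homomorphism of algebras via the functoriality of $A\mapsto G^\sic(A)$. Here I read $G^\sic[t]$ as the current group $G^\sic(\bC[t])$ of polynomial loops, and the content of the lemma is that such a polynomial loop can be restricted, in a family over $\bA^k$, to the formal neighborhoods of the points $c_1,\dots,c_k$.

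First I would give a relative (over $\bA^k$) description of $\uG(k)$. Writing $\bC[\bA^k]=\bC[c_1,\dots,c_k]$ and $P(t)=\prod_{i=1}^k(t-c_i)\in\bC[\bA^k][t]$ for the universal polynomial, set $\widehat R_n=\bC[\bA^k][t]/(P(t)^n)$ and $\widehat R=\varprojlim_n\widehat R_n$, the completion of $\bC[\bA^k][t]$ along the universal divisor $\{P=0\}\subset\bA^k\times\bA^1$. Since $P(t)$ is monic of degree $k$ in $t$, each $\widehat R_n$ is a finite free $\bC[\bA^k]$-module of rank $kn$, so the Weil restriction $\mathrm{Res}_{\widehat R_n/\bC[\bA^k]}G^\sic$ is a smooth affine group scheme over $\bA^k$, and by the fiberwise formula \eqref{inverselimit} one has $\uG(k)=\varprojlim_n \mathrm{Res}_{\widehat R_n/\bC[\bA^k]}G^\sic$. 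Taking global sections over $\bA^k$ and using that $\mathrm{Hom}_{\bC\text{-alg}}(\eO(G^\sic),-)$ commutes with inverse limits, this yields
\[
\Gamma(\bA^k,\uG(k))=\varprojlim_n G^\sic(\widehat R_n)=G^\sic(\widehat R).
\]

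Next I would exhibit the algebra homomorphism. The inclusion $\bC[t]\hookrightarrow\bC[\bA^k][t]$ composed with the projections $\bC[\bA^k][t]\twoheadrightarrow\widehat R_n$ is compatible with the transition maps of the inverse system, hence defines a $\bC$-algebra homomorphism $\iota\colon\bC[t]\to\widehat R$. Applying the functor $G^\sic(-)$, which sends $\bC$-algebra homomorphisms to group homomorphisms, gives
\[
G^\sic(\bC[t])\xrightarrow{\ G^\sic(\iota)\ }G^\sic(\widehat R)=\Gamma(\bA^k,\uG(k)),
\]
which is the asserted map: fiberwise over $\bc=(c_1,\dots,c_k)$ it is induced by the reductions $\bC[t]\to\bC[t]/P_{\bc}(t)^n$. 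The homomorphism property is automatic, since $\iota$ preserves products and $G^\sic(-)$ is a functor into groups, so no further computation is needed.

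The only genuine point to verify is the identification $\Gamma(\bA^k,\uG(k))=G^\sic(\widehat R)$: one must check that the universal divisor $\{P=0\}$ is finite flat over $\bA^k$ (guaranteed by the monicity of $P$ in $t$), so that the relative completion $\widehat R$ and the Weil restrictions $\mathrm{Res}_{\widehat R_n/\bC[\bA^k]}G^\sic$ are well behaved and genuinely represent the fiberwise inverse limit \eqref{inverselimit} in a family. This is the main—and essentially foundational—obstacle; once it is in place, functoriality delivers the homomorphism for free.
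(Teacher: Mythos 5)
Your proposal is correct and follows essentially the same route as the paper: the paper's proof is a one-line observation that the homomorphism is induced by sending the coordinate $t$ in $\bC[t]$ to $t \pmod{P(t)^n}$, which is exactly your algebra map $\iota\colon\bC[t]\to\widehat R$ followed by functoriality of $G^\sic(-)$. Your version merely supplies the foundational details (Weil restriction over $\bA^k$, identification of $\Gamma(\bA^k,\uG(k))$ with $G^\sic(\widehat R)$) that the paper leaves implicit.
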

	\begin{proof}
		Recall that $\uG(k)$ is defined as a scheme over $\bA^k$ whose fiber over a point $\bc$ is equal to the 
		inverse limit ($n\to\infty$) of the groups $G^\sic(\bC[t]/P(t)^n)$, where $P(t)=\prod_{i=1}^k (t-c_i)$.
		Now the desired homomorphism is induced by sending the coordinate $t$ in
		$\BC[t]$ to $t\pmod{P(t)^n}$. 
	\end{proof}
	
	\begin{cor}
		The space of sections $H^0(\ol\Gr{}^{\ula},\cL^{\T\ell})$ is a $\fg[t]$-module. The $\fg[t]$-action
		commutes with the natural action of $\bC[\bA^k]$.
	\end{cor}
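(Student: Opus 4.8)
The plan is to deduce this directly from Lemma~\ref{above}, combined with the fact that $\cL$ is a $\uG(k)$-equivariant line bundle. First I would recall the structures already in place: $\uG(k)$ acts on $\Gr_{\bA^k}$ fiberwise over $\bA^k$, and by construction $\ol\Gr{}^{\ula}=\ol{\uG(k).t^\ula}$ is a closure of a $\uG(k)$-orbit, hence $\uG(k)$-stable. The determinant line bundle $\cL$, and therefore each power $\cL^{\T\ell}$, carries a canonical $\uG(k)$-equivariant structure compatible with the projection $\pi_\ula$ to $\bA^k$ (see \cite{Kum,Z2}). This equivariant structure is exactly the input that turns a geometric action into an action on sections.

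Given this, the group of sections $\Gamma(\bA^k,\uG(k))$ acts on the total space of $\cL^{\T\ell}$ over $\ol\Gr{}^{\ula}$ by line-bundle automorphisms lifting its action on $\ol\Gr{}^{\ula}$, and hence acts linearly on the space of global sections $H^0(\ol\Gr{}^{\ula},\cL^{\T\ell})$. Since the entire picture lives over $\bA^k$ and $\uG(k)$ acts fiberwise (fixing the base pointwise), these automorphisms cover the identity on $\bA^k$; consequently they are $\bC[\bA^k]$-linear. In other words, $\Gamma(\bA^k,\uG(k))$ acts on $H^0(\ol\Gr{}^{\ula},\cL^{\T\ell})$ by automorphisms of $\bC[\bA^k]$-modules. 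I would then compose with the homomorphism $G^\sic[t]\to\Gamma(\bA^k,\uG(k))$ of Lemma~\ref{above} to obtain an action of $G^\sic[t]$ on $H^0(\ol\Gr{}^{\ula},\cL^{\T\ell})$ by $\bC[\bA^k]$-linear automorphisms, and differentiate it at the identity to produce the desired action of the Lie algebra $\fg[t]=\mathrm{Lie}(G^\sic[t])$.

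The step that requires genuine care is this last differentiation, since $G^\sic[t]$ is an infinite-dimensional (pro-algebraic) group and $H^0(\ol\Gr{}^{\ula},\cL^{\T\ell})$ is an a priori infinite, graded $\bC[\bA^k]$-module. The way I would make it precise is to observe that the $G^\sic[t]$-action is locally finite: it factors, degree by degree, through the finite-jet quotients $G^\sic(\bC[t]/t^n)$ acting on finite-dimensional $\bC[\bA^k]$-submodules, on each of which differentiation is the standard construction. The induced $\fg[t]$-action is then unambiguous, and it is $\bC[\bA^k]$-linear precisely because every group automorphism in the image was $\bC[\bA^k]$-linear; linearity is preserved under passage to the tangent action. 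This shows that $H^0(\ol\Gr{}^{\ula},\cL^{\T\ell})$ is a $\fg[t]$-module whose action commutes with the $\bC[\bA^k]$-action, which is exactly the assertion of the Corollary.
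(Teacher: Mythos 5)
Your argument is correct and follows the same route as the paper: the $\fg[t]$-module structure comes from Lemma~\ref{above} (via the equivariant structure on $\cL$ and differentiation of the $G^\sic[t]$-action), and commutation with $\bC[\bA^k]$ holds because $\uG(k)$ acts fiberwise. You have simply spelled out in full the details that the paper's two-sentence proof leaves implicit.
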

	\begin{proof}
		The first claim is a direct consequence of Lemma~\ref{above}. The second claim is clear since the group scheme $\uG(k)$ acts fiberwise.
	\end{proof}
	
	Now we prove our claim for the case of one weight.
	
	\begin{lem}\label{k=1}
		Let $k=1$, i.e.\ $\ula=(\la)$, $\la\in P_+$. Then for any $\ell\ge 1$ we have an isomorphism of $\fg[t]$-modules
		$$H^0(\ol\Gr{}^{(\la)},\cL^{\T\ell})^\vee\simeq D_{\ell,\la}[t].$$
	\end{lem}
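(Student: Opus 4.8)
The plan is to exploit that for $k=1$ everything is a trivial family over the affine line. Here $\BA^{(\la)}\simeq\BA^1$ with coordinate ring $\cA(\la)=\bC[z]$, and $\ol\Gr{}^{(\la)}$ is simply the Beilinson--Drinfeld Schubert variety $\ol\Gr{}^\la$ over $\BA^1$, whose fiber over $c$ is the spherical Schubert variety $\ol\Gr{}^\la\subset\Gr_c$. Using the isomorphism $\cO_c\simeq\cO$, $s:=t-c$, one obtains a trivialization $\ol\Gr{}^{(\la)}\simeq\ol\Gr{}^\la\times\BA^1$ over $\BA^1$. Since $\ol\Gr{}^\la$ is normal with $\mathrm{Pic}(\ol\Gr{}^\la)=\BZ\cdot[\cL]$ and $\mathrm{Pic}(\BA^1)=0$, one has $\mathrm{Pic}(\ol\Gr{}^\la\times\BA^1)=\BZ\cdot[\cL]$; as the determinant bundle restricts to $\cL^{\T\ell}$ on every fiber, it must be the constant family $\cL^{\T\ell}\boxtimes\cO_{\BA^1}$. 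Hence the family is constant, and together with the affine Borel--Weil isomorphism $H^0(\ol\Gr{}^\la,\cL^{\T\ell})^*\simeq D_{\ell,\la}$ this gives the free $\bC[z]$-module $H^0(\ol\Gr{}^{(\la)},\cL^{\T\ell})\simeq D_{\ell,\la}^*\T\bC[z]$. Dualizing over $\bC[z]$ yields $H^0(\ol\Gr{}^{(\la)},\cL^{\T\ell})^\vee\simeq D_{\ell,\la}\T\bC[z]$, which agrees with $D_{\ell,\la}[t]$ as a $\bC[z]$-module.

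It then remains to verify that this $\bC[z]$-linear identification is $\fg[t]$-equivariant. By Lemma~\ref{above} the $\fg[t]$-action on sections is induced by $G^\sic[t]\to\Gamma(\BA^1,\uG(1))$, which in the chosen trivialization sends $g(t)$ to the section $c\mapsto g(s+c)\in G^\sic(\cO)$ acting through the standard arc-group action on $\ol\Gr{}^\la$. As $G^\sic$ is semisimple and simply connected, $G^\sic(\cO)$ has no nontrivial characters, so its linearization of $\cL^{\T\ell}$ is unique; therefore the total linearization is the constant one, and the operator by which $xt^n$ acts is the expansion $x(s+c)^n=\sum_{j=0}^n\binom{n}{j}c^{n-j}\,(xs^j)$, where $c^{n-j}$ is multiplication in $\bC[z]$ and $xs^j$ denotes the standard action of $x\T s^j\in\fg[s]$ on $D_{\ell,\la}$. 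On $D_{\ell,\la}\T\bC[z]$ this reads $xt^n.(v\T z^k)=\sum_j\binom{n}{j}(xs^j.v)\T z^{n+k-j}$, which is exactly the defining formula~\eqref{formula} of $D_{\ell,\la}[t]$ after accounting for the sign convention introduced in the Remark following~\eqref{formula} (the harmless substitution $t\mapsto-t$ arising from the coordinate $s=t-c$).

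The main obstacle is the input used in the second paragraph: one needs not only that $\cL^{\T\ell}$ is the constant family in the trivialization (which follows from the Picard group computation), but also that its $G^\sic(\cO)$-equivariant structure is constant, so that the twisted $\fg[t]$-action can be read off globally and algebraically in $c$ rather than only fiber by fiber. Granting this, the equivariance is the routine binomial expansion of $g(s+c)$ recorded above, and the only remaining subtlety is the sign, which is precisely the convention already built into~\eqref{formula}.
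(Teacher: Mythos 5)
Your proof is correct and follows essentially the same route as the paper's: trivialize the $k=1$ family over $\BA^1$, identify the dual sections with $D_{\ell,\la}\otimes\bC[x]$ via affine Borel--Weil, and read off the $\fg[t]$-action from the change of local coordinate at $c$, which is exactly the twisted action defining $D_{\ell,\la}[t]$. The extra care you take (the Picard-group argument for constancy of $\cL^{\otimes\ell}$ and the uniqueness of the $G^\sic(\cO)$-linearization) only fills in details the paper leaves implicit, and the residual sign discrepancy with formula~\eqref{formula} is the harmless $t\mapsto-t$ convention that the paper itself flags.
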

	\begin{proof}
		Recall formula \eqref{formula} for the action of the Lie algebra $\fg[x]$ on  $D_{\ell,\la}[t]\simeq D_{\ell,\la}\T\bC[t]$: 
		\begin{equation}\label{g}
		(g\T x^s) (v\T t^a)= \sum_{i=0}^s (-1)^{s-i}\binom{s}{i} (g\T x^i. v)\T t^{a+s-i},\ g\in\fg.
		\end{equation}
		Here we deliberately replaced the variable $t$ in $\fg[t]$ with an auxiliary variable $x$ in order to make the picture similar to the BD context. 
		
		Now let us identify $x$ with the global coordinate on $\bA^1$. Then one gets an isomorphism of vector spaces
		\[
		H^0(\ol\Gr{}^{(\la)},\cL^{\T\ell})^\vee\simeq D_{\ell,\la}\T\bC[x],
		\]
		where $D_{\ell,\la}$ is considered as a $\fg[t]$-module.
		The action of $\fg[x]$ is induced by the map $x\mapsto x-t$, meaning that the result of the action of $g\T x^s$ on $v\T t^a$
		is given by the right hand side of  \eqref{g}. 
	\end{proof}

	\begin{thm} \label{sections of nonsymmetrised bd}
		Let $\ula=(\la_1,\dots,\la_k)\in P_+^k$, all $\la_i$ are nonzero. Then one has an isomorphism of $\fg[t]-\bC[\bA^k]$-bimodules:
		\[
		H^0(\ol\Gr{}^{\ula},\cL^{\T\ell})^\vee\simeq \BD(\ell,\ula)\T_{\cA(\ula)} \bC[\bA^k],
		\]
		where $M^\vee$ stands for the $\bC[\bA^k]$-dual module to a $\bC[\bA^k]$-module $M$.
		
	\end{thm}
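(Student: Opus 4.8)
The plan is to realize both sides as locally free $\bC[\bA^k]$-modules on the smooth base $\bA^k$, to produce a single morphism of $\fg[t]-\bC[\bA^k]$-bimodules between them, and to check that it is an isomorphism on every fibre; a nowhere-vanishing-determinant argument then upgrades this to a global isomorphism.

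First I would set up freeness and compute the fibres on both sides. Geometrically, $\pi_{\ula}\colon\ol\Gr{}^{\ula}\to\bA^k$ is proper, $\cL^{\T\ell}$ is relatively very ample with vanishing higher cohomology, so by \cite{FL1} (as in Proposition~\ref{sections under symmetrisation}) the sheaf $\pi_{\ula*}\cL^{\T\ell}$ is locally free; since $\bA^k$ is affine, $H^0(\ol\Gr{}^{\ula},\cL^{\T\ell})$ and hence its $\bC[\bA^k]$-dual are free $\bC[\bA^k]$-modules. By cohomology and base change the fibre of the dual at a point $\bc$ with coincidence pattern \eqref{point} is $H^0(\ol\Gr{}^{\ula}_\bc,\cL^{\T\ell}_\bc)^*\simeq\bigotimes_m D_{\ell,\nu_m}$, where the $\nu_m$ are the cluster sums and each factor sits at the corresponding value. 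Algebraically, $\bD(\ell,\ula)$ is free over $\cA(\ula)$ (Proposition~\ref{global demazure is projective}(b)), so $\bD(\ell,\ula)\T_{\cA(\ula)}\bC[\bA^k]$ is free over $\bC[\bA^k]$, and its fibre at the same $\bc$ is $\bD(\ell,\ula)\T_{\cA(\ula)}\bC_\bc\simeq\bigotimes_m D_{\ell,\nu_m}$ by the factorization of global modules together with Proposition~\ref{global demazure is projective}(a). Thus both sides are free $\bC[\bA^k]$-modules of the same rank with identified fibres at every closed point.

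Next I would construct the comparison morphism. Since $\bD(\ell,\ula)$ is a cyclic $\U(\fg[t])$-module generated by its extreme-weight vector $v$, the base change $\bD(\ell,\ula)\T_{\cA(\ula)}\bC[\bA^k]$ is a cyclic $\U(\fg[t])\T\bC[\bA^k]$-module generated by $v\T1$; by Proposition~\ref{fat cartan component} (using $\bigotimes_i\cA(\ell\la_i)=\bC[\bA^k]$) it is the $\U(\fg[t])$-span of $\bigotimes_i D_{\ell,\la_i}[t]_{\ell\iota(\la_i)}$ inside $\bigotimes_i D_{\ell,\la_i}[t]$. I would define a bimodule map $\Phi\colon\bD(\ell,\ula)\T_{\cA(\ula)}\bC[\bA^k]\to H^0(\ol\Gr{}^{\ula},\cL^{\T\ell})^\vee$ by sending $v\T1$ to the canonical extreme-weight functional $\xi$ obtained by evaluating sections of $\cL^{\T\ell}$ along the lowest-weight section $t^{\ula}$ (Remark~\ref{tla}) and trivializing the resulting line bundle on $\bA^k$. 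The content of the construction is that $\xi$ satisfies the defining relations of $v\T1$: the correct weight and annihilation by $\fn_+^a$ are immediate from $t^{\ula}$ being the lowest-weight section, while the crucial point is that the $\bC[\bA^k]=\bigotimes_i\cA(\ell\la_i)$-action matches — this is exactly the content of Lemma~\ref{k=1} read through the factorization of the determinant bundle, and it is here that the coordinate change $t\mapsto t-c$ and the sign in \eqref{formula} are used.

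Finally I would check that $\Phi$ is a fibrewise isomorphism and conclude. Restricting $\Phi$ to the fibre over any $\bc$ gives a $\fg[t]$-map $\bigotimes_m D_{\ell,\nu_m}\to\bigotimes_m D_{\ell,\nu_m}$ carrying the cyclic generator $v\T1$ to the extreme-weight generator $\xi$; since the source is cyclic and both fibres have equal dimension, this is an isomorphism for every closed $\bc$. Choosing $\bC[\bA^k]$-bases of the two free modules, $\det\Phi\in\bC[\bA^k]$ is then nonzero at every closed point, hence a nonzero constant by the Nullstellensatz, so $\Phi$ is a global isomorphism of $\fg[t]-\bC[\bA^k]$-bimodules. (Alternatively one may build the isomorphism over the complement $U$ of the codimension-$\ge2$ locus, using the BD factorization over $\oA^k$ and over the generic points of the diagonals $z_i=z_j$, where it reduces to the $k=1$ case and to induction on $k$, and then extend it across $U$ by reflexivity of the two locally free sheaves.) The main obstacle is the middle step: producing a genuinely global bimodule map and verifying that the geometric $\bC[\bA^k]$-action on $\xi$ agrees with the highest-weight-algebra action on $v\T1$, i.e.\ upgrading the factorization identification over $\oA^k$ across the codimension-one diagonal.
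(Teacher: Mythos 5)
Your overall architecture (freeness of both sides, matching fibres, then a determinant/Nakayama upgrade) is reasonable, and the freeness and fibre computations are correct and available from the paper's earlier results. But there is a genuine gap at the step you yourself flag as "the main obstacle": the construction of the global bimodule map $\Phi$. To define $\Phi$ by $v\T 1\mapsto\xi$ on the cyclic module $\bD(\ell,\ula)\T_{\cA(\ula)}\bC[\bA^k]$ you must show that \emph{every} element of $\U(\fg[t])\T\bC[\bA^k]$ annihilating $v\T 1$ also annihilates $\xi$. Checking the weight, the $\fn_+[t]$-annihilation and the compatibility of the $\bC[\bA^k]$-action only shows that both modules are quotients of the same "universal" cyclic module with those relations (essentially a global Weyl-type module); it gives no map between the two quotients in either direction, because $\bD(\ell,\ula)$ is defined as a cyclic span inside a tensor product and no presentation of it by generators and relations is known (indeed, even identifying its fibre at $0$ with a Demazure module required the Fourier--Littelmann relations and was nontrivial). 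Without $\Phi$ the fibrewise comparison and the nonvanishing-determinant argument have nothing to apply to: two locally free sheaves with abstractly isomorphic fibres need not be isomorphic as bimodules.

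The paper circumvents exactly this difficulty by never constructing an abstract map out of a cyclic module. Instead, both sides are embedded into a common ambient module, namely $H^0(\ol\Gr{}^{\ula}(\oA^k),\cL^{\T\ell})^\vee\cong\bC[\oA^k]\T\bigotimes_i D(\ell,\la_i)$, using restriction to the open stratum on the geometric side and Proposition~\ref{fat cartan component} together with Lemma~\ref{k=1} on the algebraic side. One then compares the two images of the $\ell\iota(\la)$-weight component: these are two rank-one free $\bC[\bA^k]$-lattices inside a rank-one free $\bC[\oA^k]$-module, so for $k=2$ they differ by a power of $(z_1-z_2)$, and a graded character comparison (both sides have character $\ch D_{\ell,\la}\cdot(1-q)^{-k}$) forces that power to be zero; the general $k$ case follows from factorization at generic points of the diagonals plus algebraic Hartogs, and a final character count upgrades the equality of highest-weight components to equality of the full $\U(\fg[t])$-spans. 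This is close in spirit to your parenthetical alternative, but the codimension-one matching across each diagonal is precisely where the real content lies, and your proposal does not supply it.
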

	\begin{proof}
                According to~Proposition~\ref{sections under symmetrisation}(c),
                $H^0(\ol\Gr{}^{(\la)},\cL^{\T\ell})^\svee$ is free as a $\bC[\bA^k]$-module.
                In particular, its $\fg$-highest weight part $H^0(\ol\Gr{}^{\ula},\cL^{\T\ell})^\vee_{\ell\iota(\lambda)}$
                is isomorphic to the free rank one module over $\bC[\bA^k]$. We also conclude that
		\begin{equation} \label{ch of section}
		\ch_q H^0(\ol\Gr{}^{\ula},\cL^{\T\ell})^\vee = \ch_q D_{\ell,\lambda}\cdot(1-q)^{-k}.
		\end{equation}
		
		Due to Proposition \ref{fat cartan component} and Lemma \ref{k=1}, in order to prove the Theorem, it is enough to show:
		\begin{equation}\label{embed}
		H^0(\ol\Gr{}^{\ula},\cL^{\T\ell})^\vee\simeq \U(\fg[t]).
		\bigotimes_{i=1}^k H^0(\ol\Gr{}^{(\la_i)},\cL^{\T\ell})^\vee_{\ell\iota(\la_i)},
		\end{equation}
		where the lower index denotes the corresponding $\fg$-weight subspace.

		By~Lemma~\ref{k=1}, $H^0(\ol\Gr{}^{(\la_i)},\cL^{\T\ell})^\vee_{\ell\iota(\la_i)}$ is isomorphic to the
		polynomial ring in one variable as a vector space. We consider the embedding 
		\begin{equation}\label{nodiagonal}
		H^0(\ol\Gr{}^{\ula},\cL^{\T\ell})^\vee \hookrightarrow H^0(\ol\Gr{}^{\ula}(\oA^k),\cL^{\T\ell})^\vee, 
		\end{equation}
		where $\ol\Gr{}^{\ula}(\oA^k)\subset \ol\Gr{}^{\ula}$ is $\pi_{\ula}^{-1}(\oA^k)$, and the
		embedding~\eqref{nodiagonal} is induced by the open embedding 
		$\ol\Gr{}^{\ula}(\oA^k)\hookrightarrow \ol\Gr{}^{\ula}$. 
		By the factorization property,
		\begin{equation}\label{fp}
		H^0(\ol\Gr{}^{\ula}(\oA^k),\cL^{\T\ell})^\vee\cong \bC[\oA^k] \otimes \bigotimes_{i=1}^k D(\ell,\la_i).
		\end{equation}
		In particular, the highest weight part $H^0(\ol\Gr{}^{\ula}(\oA^k),\cL^{\T\ell})_{\ell\iota(\la)}^\vee$
		(with $\la=\sum_{i=1}^k \la_i$)
		is a free rank one module over the localization $\bC[\oA^k]$ of the polynomial algebra $\bC[\bA^k]$.
		
		Thus we have the embeddings
		\begin{multline*}
		H^0(\ol\Gr{}^{\ula},\cL^{\T\ell})^\vee_{\ell\iota(\lambda)}\hookrightarrow
		H^0(\ol\Gr{}^{\ula}(\oA^k),\cL^{\T\ell})^\vee_{\ell\iota(\la)}\cong
		\bC[\oA^k] \otimes \bigotimes_{i=1}^k D(\ell,\la_i)_{\ell\iota(\lambda_i)}\\
		\hookleftarrow\bC[\bA^k]\otimes_{\cA(\ula)}\bD(\ell,\ula)_{\ell\iota(\lambda)}
		\end{multline*}
		arising from the factorization property.
		
		We claim that the images of these embeddings coincide. First we consider the case $k=2$.
		Then the image $I_{\on{left}}$ of the left embedding and the image $I_{\on{right}}$
		of the right embedding both are the free rank one
		modules over $\bC[\bA^2]$ inside the free rank one module over $\bC[\oA^2]$.
		If we denote the coordinates in $\BA^2$ by $z_1,z_2$, then necessarily
		$I_{\on{left}}=(z_1-z_2)^aI_{\on{right}}$ for some $a\in\BZ$, and we have to prove $a=0$.
		Otherwise either $I_{\on{left}}\subsetneq I_{\on{right}}$ (if $a>0$) or
		$I_{\on{right}}\subsetneq I_{\on{left}}$ (if $a<0$). In the first case the graded character of
		$I_{\on{left}}$ is {\em strictly less} than the graded character of $I_{\on{right}}$ (termwise)
		that contradicts the equality $\ch_q H^0(\ol\Gr{}^{\ula},\cL^{\T\ell})^\vee
		=\ch_q D_{\ell,\lambda}\cdot(1-q)^{-k}=\ch_q\big(\bC[\bA^k]\otimes_{\cA(\ula)}\bD(\ell,\ula)\big)$
		by \eqref{ch of section} and~Remark~\ref{character of fat demazure}
		(in particular, the graded characters of the $\ell\iota(\lambda)$-weight components must coincide
		as well). The second case similarly leads to a contradiction.
		
		The coincidence of images for general $k$ now follows after localization at generic points of
		diagonals in $\BA^k$ by factorization. Since we know the coincidence generically and in
		codimension one, it follows everywhere by algebraic Hartogs' lemma: given
		two locally free sheaves on $\bA^k$, an isomorphism between them defined off a codimension~2 closed subset of $\bA^k$ necessarily extends to the whole of $\bA^k$.
		
		We conclude the equality 
		\[
		H^0(\ol\Gr{}^{\ula},\cL^{\T\ell})^\vee_{\ell\iota(\lambda)}=
		\bC[\bA^k]\otimes_{\cA(\ula)}\bD(\ell,\ula)_{\ell\iota(\lambda)}\]
		inside
		$\bC[\oA^k] \otimes \bigotimes_{i=1}^k D(\ell,\la_i)$.
		But 
		\begin{multline*}
		H^0(\ol\Gr{}^{\ula},\cL^{\T\ell})^\vee\supset
		\U(\fg[t])H^0(\ol\Gr{}^{\ula},\cL^{\T\ell})^\vee_{\ell\iota(\lambda)}\\
		=\U(\fg[t])\big(\bC[\bA^k]\otimes_{\cA(\ula)}\bD(\ell,\ula)_{\ell\iota(\lambda)}\big)
		=\bC[\bA^k]\otimes_{\cA(\ula)}\bD(\ell,\ula).
		\end{multline*}
		The equality of characters $\ch_q H^0(\ol\Gr{}^{\ula},\cL^{\T\ell})^\vee=
		\ch_q\big(\bC[\bA^k]\otimes_{\cA(\ula)}\bD(\ell,\ula)\big)$ once again guarantees that the above
		inclusion is actually an equality.
		
		The theorem is proved. 
	\end{proof}
	
	
	Now to establish a relation between the global Demazure modules and the spaces of sections of determinant line bundles on the symmetrized Schubert varieties, we prove the following theorem. 
	
	\begin{thm}\label{main theorem}
		Let $\la_1,\dots,\la_k\in P_+$, all $\la_i$ are nonzero. Then one has an isomorphism of $\fg[t]$-modules:
		\[
		H^0(\ol\Gr{}^{(\ula)},\cL^{\T\ell})^\vee\simeq \bD(\ell,\ula),
		\]
		where $M^\vee$ stands for the $\cA(\ula)$-dual module to an $\cA(\ula)$-module $M$.
		
	\end{thm}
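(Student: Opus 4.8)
The plan is to descend Theorem~\ref{sections of nonsymmetrised bd} along the finite map $\pi\colon\bA^k\to\bA^{(\ula)}$, where $\bA^k=\bA^{(\la_1)}\times\ldots\times\bA^{(\la_k)}$. Write $A=\cA(\ula)$ and $B=\bC[\bA^k]$; by Remark~\ref{abuse}, $A\subset B$ is a finite extension. In fact $\pi$ is the quotient by the reflection group $H$ of permutations of the factors of $\bA^k$ preserving the labeling by the weights $\la_i$ (this is the group acting on the linear subspace $V$ in the proof of the second Lemma of~\S\ref{high wei alg}), so that $A=B^{H}$; since $H$ is generated by transpositions, the Chevalley--Shephard--Todd theorem shows $B$ is a free, hence faithfully flat, $A$-module, and that the homogeneous units of $A$ and of $B$ are scalars. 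Put $M=\bD(\ell,\ula)$, free over $A$ by Proposition~\ref{global demazure is projective}, and $S=H^0(\ol\Gr{}^{(\ula)},\cL^{\T\ell})$. Since $\pi_{(\ula)*}\cL^{\otimes\ell}$ is locally free (proof of Proposition~\ref{sections under symmetrisation}), $S$ is a finitely generated projective, and hence, being graded over the connected graded ring $A$, free $A$-module.

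First I would compare both sides after $\T_A B$. Proposition~\ref{sections under symmetrisation}(b) gives $H^0(\ol\Gr{}^{\ula},\cL^{\T\ell})\cong S\T_A B$, so projectivity of $S$ and the tensor--hom adjunction yield
\[
\Hom_B(S\T_A B,B)\cong\Hom_A(S,B)\cong\Hom_A(S,A)\T_A B=S^\vee\T_A B,
\]
all the isomorphisms being $\fg[t]$--$B$-linear. By Theorem~\ref{sections of nonsymmetrised bd} the left-hand side is $M\T_A B$, so we obtain an isomorphism of $\fg[t]$--$B$-bimodules $S^\vee\T_A B\cong M\T_A B$.

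The delicate point is to upgrade this to a morphism defined over $A$, since faithfully flat descent does not by itself descend an abstract isomorphism. The module $M$ is cyclic, being the $\U(\fg[t])$-span of a cyclic vector $w$ of weight $\ell\iota(\la)$, and its top weight space $M_{\ell\iota(\la)}=A\cdot w$ is free of rank one. The corresponding space $S^\vee_{\ell\iota(\la)}$ is a graded direct $A$-summand of the free module $S^\vee$, hence free, and of rank one because after $\T_A B$ it becomes $M_{\ell\iota(\la)}\T_A B$. Choosing a homogeneous generator $s_0$ of $S^\vee_{\ell\iota(\la)}$, its image under the isomorphism of the previous paragraph is a homogeneous unit multiple of $w\T 1$, hence, units being scalars, a nonzero scalar multiple; rescaling, $s_0\T 1\mapsto w\T 1$. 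I then let $\Phi\colon M\to S^\vee$ be the $\fg[t]$--$A$-linear map determined by $w\mapsto s_0$. It is well defined: if $x\in\U(\fg[t])A$ annihilates $w$ in $M$, then in $M\T_A B\cong S^\vee\T_A B$ one has $x\cdot(s_0\T 1)=x\cdot(w\T 1)=0$, and since $S^\vee\hookrightarrow S^\vee\T_A B$ is injective by faithful flatness, $x\cdot s_0=0$ already in $S^\vee$.

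Finally, $\Phi\T_A B$ is $\fg[t]$--$B$-linear and sends the cyclic generator $w\T 1$ to $s_0\T 1=w\T 1$; as $M\T_A B$ is generated over $\fg[t]$ by $w\T 1$, it follows that $\Phi\T_A B$ agrees with the isomorphism of the second paragraph, and in particular is an isomorphism. Because $B$ is faithfully flat over $A$, a map of $A$-modules that becomes an isomorphism after $\T_A B$ is already an isomorphism; hence $\Phi$ is an isomorphism $\bD(\ell,\ula)\xrightarrow{\sim}S^\vee=H^0(\ol\Gr{}^{(\ula)},\cL^{\T\ell})^\vee$, as claimed. I expect the main obstacle to be precisely the construction in the third paragraph: producing a genuine $\fg[t]$--$A$-morphism rather than settling for the abstract base-changed isomorphism, and verifying its defining relations, which rests on the freeness of $S$ over $A$ and on the grading forcing the two rank-one highest weight generators to correspond up to a scalar.
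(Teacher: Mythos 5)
Your overall strategy --- base-change to $\bA^k$ via Proposition~\ref{sections under symmetrisation}(b) and Theorem~\ref{sections of nonsymmetrised bd}, match the rank-one highest-weight components, and then descend --- is essentially the paper's, and the construction of the map $\Phi$ in your third paragraph is sound. The genuine gap is the claim in your first paragraph that $\cA(\ula)=\bC[\bA^k]^H$ for a reflection group $H$ of label-preserving permutations, hence that $B=\bC[\bA^k]$ is free (or even flat) over $A=\cA(\ula)$. This is false once some $\la_i$ is non-fundamental. Take $\fg=\msl_2$, $\ula=(2\om,\om)$: the label-preserving permutation group is trivial, yet $\cA(2\om,\om)$ is the subalgebra of $\bC[z_1,z_2]$ generated by $2z_1^n+z_2^n$, $n\ge 1$ --- a proper subalgebra with the same fraction field (here $\bA^2\to\bA^{(\ula)}$ is finite and birational but not an isomorphism; $\bA^{(\ula)}$ is a non-normal surface, the scheme-theoretic image of a plane in $\bA^3/S_3$). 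A finite birational non-isomorphism of Noetherian integral schemes is never flat, so $B$ is not faithfully flat over $A$, and your concluding appeal to descent (``a map that becomes an isomorphism after $\T_A B$ is an isomorphism'') is unjustified precisely at the step where it is needed. The paper closes this step differently: having produced the embedding $\bD(\ell,\ula)\hookrightarrow H^0(\ol\Gr{}^{(\ula)},\cL^{\T\ell})^\vee$, it compares the fibers at $0$ (both isomorphic to $D_{\ell,\la_1+\dots+\la_k}$ by Proposition~\ref{global demazure is projective} and freeness of the sections) and invokes the graded Nakayama lemma, i.e.\ an equality of graded characters, to obtain surjectivity.

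The rest of your argument survives without flatness: $\Hom_B(S\T_A B,B)\cong S^\vee\T_A B$ only uses that $S$ is finitely generated projective over $A$; injectivity of $S^\vee\to S^\vee\T_A B$ and of $\Phi$ only uses freeness of $S^\vee$ and of $\bD(\ell,\ula)$ over $A$ together with $A\hookrightarrow B$. (Also note that $\bD(\ell,\ula)\T_A B$ is not $\fg[t]$-cyclic --- it is generated by $w\T 1$ only over $\U(\fg[t])$ and $B$ jointly, which is all you need there.) So the fix is to replace the descent by a character count: both $\bD(\ell,\ula)$ and $S^\vee$ are free over $\cA(\ula)$ with fibers of graded character $\ch D_{\ell,\la}$, hence $\Phi$ is an injective graded map between graded spaces with equal finite-dimensional graded pieces, hence an isomorphism.
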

	\begin{proof}
		Using Proposition \ref{sections under symmetrisation} and Theorem \ref{sections of nonsymmetrised bd} we get an isomorphism:
		\begin{equation} \label{sections times polynomials}
		\bD(\ell,\ula)\T _{\cA(\ula)} \bC[\bA^k] \simeq H^0(\ol\Gr{}^{(\ula)},\cL^{\T\ell})^\vee\T_{\cA(\ula)} \bC[\bA^k].
		\end{equation}
		Let $v$ be the cyclic vector of $\bD(\ell, \ula)$. The vector $v \T_{\cA(\ula)} 1$ in the left hand
		side of \eqref{sections times polynomials} is mapped to some vector of the form $w \T_{\cA(\ula)} 1$
		in the right hand side. Using the $\g[t]$-equivariance we obtain that 
		\[
		\bD(\ell, \ula) \simeq (\U(\g[t]).w) \T_{\cA(\ula)} 1 \subset H^0(\ol\Gr{}^{(\ula)},\cL^{\T\ell})^\vee \T_{\cA(\ula)} 1.
		\]
		Hence there is an embedding $\bD(\ell, \ula) \hookrightarrow H^0(\ol\Gr{}^{(\ula)},\cL^{\T\ell})^\vee$.
		Using Proposition~\ref{global demazure is projective}, we see that the fibers at $0$ of both
		sides are isomorphic to $D_{\ell, \la_1 + \hdots +\la_k}$. The graded version of Nakayama lemma implies
		that the above injective map is a surjection, and thus an isomorphism.
	\end{proof}
	
	\begin{rem}
		If the highest weight of a cyclic $\g[t]$ module $M$ is zero, then the module $M[z]$ is not cyclic. That is why we impose the condition $\la_i \neq 0$ in Theorems \ref{sections of nonsymmetrised bd} and \ref{main theorem}.
		
		It is an easy consequence of these Theorems that for the case $\ula = (\umu, \underbrace{0, \hdots, 0}_{n})$ with $\mu_i \neq 0$ one has
		\begin{align*}
		H^0(\ol\Gr{}^{\ula}, \cL^{\T\ell})^\vee &\simeq \big( \BD(\ell,\umu) \T_{\cA(\umu)} \bC[\bA^k] \big) \T \bC[\bA^n]; \\
		H^0(\ol\Gr{}^{(\ula)},\cL^{\T\ell})^\vee &\simeq \bD(\ell,\umu) \T \bC[\bA^{(n)}].
		\end{align*}
	\end{rem}
	
	\begin{cor}
		One has an isomorphism of $\bA^{(\ula)}$-schemes:
		\[
		\ol\Gr{}^{(\ula)}\simeq \Proj\left(\bigoplus_{\ell\ge 0}\BD(\ell,\ula)^\vee\right). 
		\]
	\end{cor}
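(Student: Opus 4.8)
The plan is to deduce this corollary directly from Theorem~\ref{main theorem} together with the graded ring structure identified in Corollary~\ref{algebra of duals R's}. The key observation is that both sides are $\BA^{(\ula)}$-schemes arising as $\Proj$ of a naturally graded $\cA(\ula)$-algebra, so it suffices to identify the two graded algebras. On the geometric side, since $\cL$ is relatively very ample on $\ol\Gr{}^{(\ula)}$ over $\BA^{(\ula)}=\Spec\cA(\ula)$ and the higher cohomology vanishes, the graded algebra of sections $\bigoplus_{\ell\ge 0}H^0(\ol\Gr{}^{(\ula)},\cL^{\T\ell})$ recovers $\ol\Gr{}^{(\ula)}$ as its relative $\Proj$ over $\BA^{(\ula)}$ (by the standard $\Proj$-reconstruction for a projective morphism equipped with a relatively ample line bundle, generated in degree one after passing to a power of $\cL$).

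First I would record that by Theorem~\ref{main theorem} we have, for every $\ell\ge 0$, an isomorphism of $\cA(\ula)$-modules $H^0(\ol\Gr{}^{(\ula)},\cL^{\T\ell})\simeq\bD(\ell,\ula)^\vee$ (the case $\ell=0$ being $\cA(\ula)$ itself). Next I would check that these isomorphisms assemble into an isomorphism of graded $\cA(\ula)$-algebras, i.e.\ that the multiplication on sections coming from $\cL^{\T\ell_1}\T\cL^{\T\ell_2}\to\cL^{\T(\ell_1+\ell_2)}$ matches the algebra structure on $\bigoplus_\ell\bD(\ell,\ula)^\vee$ built in Corollary~\ref{algebra of duals R's}. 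Concretely, the multiplication in the latter is dual to the cyclic comultiplication $\bD(\ell_1+\ell_2,\ula)\hookrightarrow\bD(\ell_1,\ula)\T_{\cA(\ula)}\bD(\ell_2,\ula)$, which under $\bD(\ell,\ula)\simeq D_{1,\la}^{\odot\ell}$ (Proposition~\ref{tensor product over A}) is nothing but the coproduct coming from $u^{\T(\ell_1+\ell_2)}\mapsto u^{\T\ell_1}\T u^{\T\ell_2}$ on highest-weight vectors; this is precisely the Borel--Weil section multiplication once one traces through the fiberwise identification $H^0(\ol\Gr{}^{\la}_\bc,\cL^{\T\ell})^*\simeq D_{\ell,\la}$.

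The main obstacle I anticipate is the compatibility of the two algebra structures rather than the module-level isomorphism, which is already supplied by Theorem~\ref{main theorem}. To handle it cleanly I would work fiberwise over the generic locus $\oA^k$ (mapped into $\BA^{(\ula)}$), where by the factorization property the sections decompose as tensor products and the section-multiplication is the tensor product of the classical Borel--Weil multiplications on the individual spherical Schubert varieties $\ol\Gr{}^{\la_i}$; matching this with the componentwise cyclic coproduct is then immediate from the definition of $\odot$. Since both graded algebras are finitely generated $\cA(\ula)$-algebras that are flat (indeed free, by Proposition~\ref{global demazure is projective}) over the integral base $\BA^{(\ula)}$, an isomorphism that holds over a dense open subset and respects the gradings extends uniquely over all of $\BA^{(\ula)}$.

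Finally I would conclude by applying the relative $\Proj$ functor: the graded-algebra isomorphism $\bigoplus_{\ell\ge 0}H^0(\ol\Gr{}^{(\ula)},\cL^{\T\ell})\simeq\bigoplus_{\ell\ge 0}\bD(\ell,\ula)^\vee$ induces an isomorphism of $\BA^{(\ula)}$-schemes
\[
\ol\Gr{}^{(\ula)}\simeq\Proj\left(\bigoplus_{\ell\ge 0}H^0(\ol\Gr{}^{(\ula)},\cL^{\T\ell})\right)\simeq\Proj\left(\bigoplus_{\ell\ge 0}\bD(\ell,\ula)^\vee\right),
\]
where the first isomorphism is the standard reconstruction of a scheme projective over its base from its ring of sections of an ample line bundle (using the generation-in-degree-one remark recorded just before Lemma~\ref{fiber of level 1 demazure}, so that no Veronese twist is needed). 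This completes the identification.
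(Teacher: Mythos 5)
Your proposal is correct and follows essentially the same route as the paper, which states this corollary without proof as an immediate consequence of Theorem~\ref{main theorem} combined with the relative very ampleness of $\cL$ and the standard $\Proj$-reconstruction. The extra care you take in matching the section multiplication with the algebra structure of Corollary~\ref{algebra of duals R's} (via factorization over the generic locus and freeness over $\cA(\ula)$) is a detail the paper leaves implicit, but it is consistent with the intended argument.
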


	Let us consider the special case when all $\la_i$ are fundamental coweights. In particular,
	$\cA(\ula)\simeq \cA_\la$. We obtain the following corollary.
	\begin{cor}
		Assume that all $\la_i$ are fundamental coweights and let $\la=\sum_{i=1}^k \la_i$. Then
		\begin{enumerate}
			\item $H^0(\ol\Gr{}^{(\ula)},\cL^{\T\ell})^\vee\simeq \bD(\ell,\la)$;
			\item $\ol\Gr{}^{(\ula)}\simeq \Proj\left(\bigoplus_{\ell\ge 0} \bD(\ell,\la)^\vee\right)$.
		\end{enumerate}
	\end{cor}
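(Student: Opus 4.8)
The plan is to deduce both assertions directly from Theorem~\ref{main theorem} and from the Corollary stated immediately after it, the only task being to match the general objects $\bD(\ell,\ula)$ and $\cA(\ula)$ with their fundamental-weight incarnations. Concretely, I would first record the relevant specializations of notation. Take for $\ula$ the ordered tuple $(\om_1,\ldots,\om_1,\ldots,\om_r,\ldots,\om_r)$ with $m_j$ copies of $\om_j$, where $\la=\sum_{j=1}^r m_j\om_j$. By the Remark defining $\bD_{\ell,\la}$ in~\S\ref{curr alg mod} one then has $\bD(\ell,\ula)=\bD_{\ell,\la}$, and by the displayed isomorphism of~\S\ref{curr alg mod} for fundamental weights one has $\cA(\ula)\simeq\cA_\la\simeq\bC[z_1,\ldots,z_k]^{S_\la}$; in the Corollary the symbol $\bD(\ell,\la)$ is shorthand for this $\bD_{\ell,\la}$. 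I would also note that in this case $\ol\Gr{}^{(\ula)}=\ol\Gr_\la$ by the construction of~\S\ref{bdg}, so that the left-hand sides below depend on $\la$ alone and not on the ordering of the fundamental coweights.

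For part (1) I would apply Theorem~\ref{main theorem} to this $\ula$, giving the isomorphism of $\fg[t]$-modules
\[
H^0(\ol\Gr{}^{(\ula)},\cL^{\T\ell})^\vee\simeq \bD(\ell,\ula),
\]
where the dual is the $\cA(\ula)$-dual. Substituting $\cA(\ula)\simeq\cA_\la$ and $\bD(\ell,\ula)=\bD_{\ell,\la}=\bD(\ell,\la)$ gives the assertion. For part (2) I would invoke the Corollary following Theorem~\ref{main theorem}, which yields the isomorphism of $\bA^{(\ula)}$-schemes
\[
\ol\Gr{}^{(\ula)}\simeq \Proj\Bigl(\bigoplus_{\ell\ge 0}\bD(\ell,\ula)^\vee\Bigr).
\]
Since all $\la_i$ are fundamental we have $\bA^{(\ula)}=\Spec\cA(\ula)=\Spec\cA_\la=\bA^\la$, and replacing $\bD(\ell,\ula)^\vee$ by $\bD(\ell,\la)^\vee$ finishes the proof.

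I do not expect any genuine obstacle here: the statement is a pure reindexing of results already proved in full generality. The only points deserving a line of justification are the consistency of the fundamental-weight specialization of the notation --- namely $\bD(\ell,\ula)=\bD_{\ell,\la}$ and the identification $\cA(\ula)\simeq\cA_\la$, together with the fact that $\ol\Gr{}^{(\ula)}=\ol\Gr_\la$ is independent of the ordering of the $\la_i$ --- and the observation that under $\cA(\ula)\simeq\cA_\la$ the $\cA(\ula)$-dual becomes the $\cA_\la$-dual, so that the right-hand sides are exactly the asserted ones.
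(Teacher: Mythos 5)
Your proposal is correct and matches the paper exactly: the paper derives this corollary with no further argument, simply by specializing Theorem~\ref{main theorem} and its corollary to the case of fundamental coweights, where $\cA(\ula)\simeq\cA_\la$, $\bD(\ell,\ula)=\bD_{\ell,\la}$, $\ol\Gr{}^{(\ula)}=\ol\Gr_\la$, and $\bA^{(\ula)}=\bA^\la$. The notational identifications you single out for justification are precisely the content of the specialization.
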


	\subsection{Embeddings of the BD Schubert varieties}
	The goal of this section is to show that the global Demazure modules provide projective embeddings
	of  Beilinson-Drinfeld 
	Schubert varieties (generalizing a relation between the affine Demazure modules and Schubert varieties).   
	
	Thanks to Proposition \ref{global demazure is projective} the global Demazure module $\bD(\ell,\ula)$ is free over $\cA(\ula)$.
	Hence one gets a vector bundle $\cD(\ell,\ula)$ on 
	$\bA^{(\ula)}=\Spec(\cA(\ula))$, whose fiber is given by the fiber of $\bD(\ell,\ula)$ at a point of the base.
	We will need the following lemma in order to embed  the BD Schubert varieties into the fiberwise projectivized vector bundle
	$\cD(1,\ula)$.
	
	\begin{lem}\label{group scheme}
		The group scheme $\uG(k)$ acts on $\cD(\ell,\ula)$ fiberwise.
	\end{lem}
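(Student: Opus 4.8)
The plan is to exhibit the $\uG(k)$-action by integrating, fibrewise, the $\fg[t]$-action on the free $\cA(\ula)$-module $\bD(\ell,\ula)$. First recall that, by Proposition~\ref{global demazure is projective}, $\bD(\ell,\ula)$ is free over $\cA(\ula)$ with finite-dimensional fibres, so $\cD(\ell,\ula)$ is a finite-rank vector bundle on $\bA^{(\ula)}$; moreover the $\fg[t]$-action commutes with $\cA(\ula)$, hence $\fg[t]$ acts by $\cA(\ula)$-linear endomorphisms, i.e.\ by global endomorphisms of $\cD(\ell,\ula)$ covering the identity of the base. By the factorization property together with Proposition~\ref{global demazure is projective}, the fibre of $\cD(\ell,\ula)$ at (the image of) a point~\eqref{point} is the tensor product $\bigotimes_c D_{\ell,\mu_c}$, where $c$ runs over the distinct values among the $c_i$ and $\mu_c=\sum_{i:\,c_i=c}\la_i$.

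The key local input is that each affine Demazure module $D_{\ell,\mu}$ is a \emph{finite-dimensional} $\fg[t]$-module, so $\fg\T t^N\bC[t]$ annihilates it for $N\gg0$. Consequently its action factors through $\fg\T\bC[t]/t^N$ and integrates to an algebraic representation of the connected, simply connected jet group $G^\sic(\bC[t]/t^N)$, hence of $G^\sic(\cO)$; recentering at a point $c$ gives an action of $G^\sic(\cO_c)=G^\sic(\bC[\![t-c]\!])$. Therefore the fibre $\uG(k)_\bc=\prod_c G^\sic(\cO_c)$ of the group scheme acts on the fibre $\bigotimes_c D_{\ell,\mu_c}$, one tensor factor at a time.

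It remains to organize these fibrewise integrations into a genuine, algebraic action of the group scheme. Since the $\fg[t]$-action is $\cA(\ula)$-linear, it defines a morphism from the relative Lie algebra of $\uG(k)$ to the endomorphism bundle of $\cD(\ell,\ula)$: over $\bA^k$ the generators $x\T t^n$ restrict, along the inverse-limit presentation~\eqref{inverselimit}, to sections of $\operatorname{Lie}\uG(k)$, and Lemma~\ref{above} identifies these with the images of $G^\sic[t]$. Because each fibre of $\uG(k)$ is pro-(simply connected) and acts on a finite-dimensional fibre of $\cD(\ell,\ula)$, this Lie-algebra morphism integrates uniquely and algebraically to the desired fibrewise action. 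The main obstacle is precisely this last bookkeeping: passing from the fibrewise (finite-dimensional) integrations, whose jet depth $N$ depends on the base point, to a single morphism of pro-schemes compatible with~\eqref{inverselimit}, while matching the base $\bA^k$ of $\uG(k)$ with the base $\bA^{(\ula)}$ of $\cD(\ell,\ula)$ through the finite $S_\lambda$-equivariant projection of Remark~\ref{abuse}. Alternatively one may bypass integration entirely: by Theorem~\ref{sections of nonsymmetrised bd} the pullback of $\cD(\ell,\ula)$ to $\bA^k$ is $H^0(\ol\Gr{}^\ula,\cL^{\T\ell})^\vee$, on which $\uG(k)$ acts fibrewise because it acts on $\ol\Gr{}^\ula$ preserving the determinant line bundle $\cL$; one then checks $S_\lambda$-equivariance and descends along $\bA^k\to\bA^{(\ula)}$.
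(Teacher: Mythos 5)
Your core argument coincides with the paper's own proof: by factorization the fibre of $\cD(\ell,\ula)$ over a point of the base is a tensor product of finite-dimensional level-$\ell$ Demazure modules recentered at the distinct coordinates $c$, each of which integrates to an algebraic module over the corresponding $G^\sic(\cO_c)$, so the fibre $\uG(k)_\bc$ acts factor by factor. Your extra care about assembling these fibrewise integrations into an algebraic family, and the alternative route via Theorem~\ref{sections of nonsymmetrised bd}, go beyond the paper's (very terse) proof but are consistent with it.
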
 
	\begin{proof}
		Recall (see \eqref{inverselimit}) that the fiber of $\BG(k)$  over a point $\bc=(c_1,\dots,c_k)\in\bA^k$
		is equal to the inverse limit 
		\[
		\uG(k)_\bc=\varprojlim_{m} G^\sic(\bC[t]/P(t)^m),\ P(t)=\prod_{i=1}^k (t-c_i).
		\]
		We also know that for $\bc = (\underbrace{c_1, \hdots, c_1}_{i_1}, \hdots, \underbrace{c_n, \hdots, c_n}_{i_n}) \in \bC^k$
		such that $c_p \neq c_q$ for $p \neq q$ one has
		\[
		\bD(\ell, \ula) \T_{\cA(\ula)} \bC_\bc \simeq \bigotimes_{p = 1}^n D_{\ell, \la_{i_1+\dots+i_{p-1}}+\dots+\la_{i_1+\dots+i_p}}(c_{i_p}).
		\]
		We conclude that $\uG(k)$ acts on $\cD(\ell,\ula)$ fiberwise.
	\end{proof}
	
	One has a section
	$s^\ula\colon \bA^{(\ula)}\to \bP(\cD(1,\ula))$ of the natural projection map
	$\bP(\cD(1,\ula))\to  \bA^{(\ula)}$ sending a point $\bc$ to the highest weight line of $\bD(1, \ula) \T_{\cA(\ula)} \bC_\bc$.
	By Lemma \ref{group scheme} the group of sections of the group scheme $\uG(k)$ naturally acts on $\bP(\cD(1,\ula))$.
	We obtain the following corollary.
	\begin{cor}
		$\ol\Gr{}^{(\ula)}$ is equal to the 
		closure of the $\uG(k)$-orbit of the section $s^\ula$.
	\end{cor}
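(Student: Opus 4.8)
The plan is to use the closed embedding $\ol\Gr{}^{(\ula)}\simeq\Proj\big(\bigoplus_{\ell\ge0}\bD(\ell,\ula)^\vee\big)\hookrightarrow\bP(\cD(1,\ula))$, which comes from the surjection $\on{Sym}^\bullet_{\cA(\ula)}\bD(1,\ula)^\vee\twoheadrightarrow\bigoplus_{\ell\ge0}\bD(\ell,\ula)^\vee$ together with the identification of $\cD(1,\ula)$ with the bundle attached to the free module $\bD(1,\ula)$. Combining this with the fiberwise descriptions of both objects, I would reduce the assertion to the defining property $\ol\Gr{}^\mu=\overline{G^\sic(\cO).t^\mu}$ of the affine spherical Schubert varieties. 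First I would check that $s^\ula$ lands in $\ol\Gr{}^{(\ula)}$ and identify it pointwise. Over a point $\bc$ as in \eqref{point}, the fiber $\ol\Gr{}^{(\ula)}_\bc$ of $\pi_{(\ula)}$ is the product $\prod_p\ol\Gr{}^{\mu_p}$ of affine spherical Schubert varieties, where $\mu_p$ is the sum of those $\la_i$ whose points $c_i$ are grouped together, while by Lemma~\ref{group scheme} the fiber $\cD(1,\ula)_\bc$ is $\bigotimes_p D_{1,\mu_p}(c_{i_p})$ and the embedding restricts to the product of the standard embeddings $\ol\Gr{}^{\mu_p}\subset\bP(D_{1,\mu_p})$. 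The highest weight line $s^\ula(\bc)$ of $\bigotimes_p D_{1,\mu_p}$ is then the image of the tuple $(t^{\mu_1},\dots,t^{\mu_n})$, which lies in $\prod_p\ol\Gr{}^{\mu_p}$ by Remark~\ref{tla}. Hence $s^\ula$ is a section of $\pi_{(\ula)}$, so (taking the identity element of $\uG(k)$) the orbit $\uG(k).s^\ula$ lies in the closed, $\uG(k)$-stable subset $\ol\Gr{}^{(\ula)}$, and therefore so does its closure $Z:=\overline{\uG(k).s^\ula}$.

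For the reverse inclusion I would argue fiber by fiber. The fiber $\uG(k)_\bc$ is $\prod_p G^\sic(\cO)$ and $s^\ula(\bc)=(t^{\mu_1},\dots,t^{\mu_n})$, so using $\ol\Gr{}^{\mu_p}=\overline{G^\sic(\cO).t^{\mu_p}}$ and the fact that closure commutes with finite products,
\[
\overline{\uG(k)_\bc.s^\ula(\bc)}=\prod_p\overline{G^\sic(\cO).t^{\mu_p}}=\prod_p\ol\Gr{}^{\mu_p}=\ol\Gr{}^{(\ula)}_\bc.
\]
Since $\uG(k)_\bc.s^\ula(\bc)\subseteq Z_\bc$ and $Z_\bc$ is closed in the fiber, this forces $\ol\Gr{}^{(\ula)}_\bc\subseteq Z_\bc$; together with $Z_\bc\subseteq\ol\Gr{}^{(\ula)}_\bc$ this gives $Z_\bc=\ol\Gr{}^{(\ula)}_\bc$ for every $\bc\in\bA^{(\ula)}$. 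A closed subset meeting every fiber of $\pi_{(\ula)}$ in the entire fiber must be the whole space, so $Z=\ol\Gr{}^{(\ula)}$, which is exactly the claim. Note this argument avoids any appeal to irreducibility of $\ol\Gr{}^{(\ula)}$.

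The hard part is the bookkeeping in the first paragraph: one must verify that the abstractly defined highest weight section $s^\ula$ of $\bP(\cD(1,\ula))$ really meets the Schubert variety, and that the fiber of the embedding genuinely factors as the product of standard embeddings followed by the tensor (Segre) map. Most delicate is checking that this holds \emph{uniformly} across the diagonal strata of $\bA^{(\ula)}$, where the fibers degenerate into products with fewer factors and the weights $\la_i$ merge into the sums $\mu_p$; this is precisely where the fiber computation of Lemma~\ref{group scheme}, the factorization property of the BD Grassmannian, and the properness of $\pi_{(\ula)}$ are needed.
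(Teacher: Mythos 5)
Your argument is correct, and it is a fleshed-out version of what the paper leaves implicit: the paper's entire proof is ``follows from the definition of $\ol\Gr{}^{\ula}$ and Theorem~\ref{main theorem}'', i.e.\ it identifies $s^\ula$ with the (symmetrized image of the) section $t^\ula$ under the embedding $\ol\Gr{}^{(\ula)}\simeq\Proj\big(\bigoplus_{\ell\ge 0}\bD(\ell,\ula)^\vee\big)\subset\bP(\cD(1,\ula))$ and appeals to the orbit-closure definition. Your first paragraph is exactly that identification made explicit. Where you genuinely diverge is in the reverse inclusion: rather than arguing that the closure of the orbit of the section already exhausts $\ol\Gr{}^{(\ula)}$ (which, after the quotient by $S_\lambda$ and the base change to $\BA^{(\ula)}$, would require checking that these operations commute with taking orbit closures, or else irreducibility of $\ol\Gr{}^{(\ula)}$), you verify the equality fiber by fiber, reducing to $\ol\Gr{}^{\mu_p}=\ol{G^\sic(\cO).t^{\mu_p}}$ and the fact that Zariski closure commutes with finite products. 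This is cleaner and, as you note, avoids irreducibility; the price is that it leans entirely on the assertion that the fiber of $\ol\Gr{}^{(\ula)}$ over a diagonal point is the \emph{full} product $\prod_p\ol\Gr{}^{\mu_p}$ (not something smaller), which is precisely the nontrivial flatness-type input about BD Schubert varieties --- the paper states it without proof, so you are entitled to it, but it is doing the real work. Two minor points: properness of $\pi_{(\ula)}$, which you list among the needed ingredients, is not actually used anywhere in your argument; and the final step ``equal on every fiber implies equal'' is a statement about reduced subschemes, which is fine here since both sides are varieties.
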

	\begin{proof}
		Follows from definition of $\ol\Gr{}^{\ula}$ in Section \ref{bdg} and Theorem \ref{main theorem}.
	\end{proof}

	\appendix
	\section{On the associativity of fusion product}
	
	It was conjectured in \cite{DF} that
	\begin{equation}\label{Demazure-fusion}
	R(M_1, \hdots, M_k) \T_{\cA (\ula)} \bC_0 \simeq M_1 \ast \hdots \ast M_k.
	\end{equation}
	The existence of the isomorphism \eqref{Demazure-fusion} implies that fusion product does not depend on the choice of constants. 
	Now we prove that \eqref{Demazure-fusion} also implies the associativity of the fusion product.
	
	\begin{prop} \label{fusion is associative}
		Let $N_1, \hdots, N_k, M_1, \hdots, M_m$ be finite-dimensional graded cyclic $\fg[t]$-modules with cyclic vectors of weights 
		$\la_1^\svee, \hdots, \la_k^\svee, \mu_1^\svee, \hdots, \mu_m^\svee$ such that 
		\[
		R(N_1, \hdots, N_k, M_1, \hdots, M_m) \T_{\cA(\ula^\svee, \umu^\svee)} \bC_0 \simeq N_1 \ast \hdots \ast N_k \ast M_1 \ast \hdots \ast M_m.
		\]
		Then 
		
		\textup{(a)}
		\[
		R(N_1, \hdots, N_k) \T_{\cA(\ula^\svee)} \bC_0 \simeq N_1 \ast \hdots \ast N_k,
		\]
		
		\textup{(b)}
		\[
		N_1 \ast \hdots \ast N_k \ast M_1 \ast \hdots \ast M_m \simeq (N_1 \ast \hdots \ast N_k) \ast M_1 \ast \hdots \ast M_m,
		\]
		
		\textup{(c)}
		\begin{multline*}
		  R(R(N_1, \hdots, N_k) \T_{\cA(\ula^\svee)} \bC_0, M_1, \hdots, M_m )
                  \T_{\cA(\la_1^\svee + \cdots + \la_k^\svee, \umu^\svee)} \bC_0 \\
		\simeq R(N_1, \hdots, N_k, M_1, \hdots, M_m) \T_{\cA(\ula^\svee, \umu^\svee)} \bC_0.
		\end{multline*}
	\end{prop}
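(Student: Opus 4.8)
The plan is to reduce both statements to comparisons of dimensions, using the surjection from the fiber at $0$ onto the fusion product (equation \eqref{fiber surjects to fusion}), the factorization property of global modules \cite{DF}, and the upper semi-continuity of fiber dimensions. Write $F=N_1\ast\dots\ast N_k\ast M_1\ast\dots\ast M_m$ for the full fusion product. Since a fusion product is isomorphic to the ordinary tensor product as a graded vector space \cite{FeLo}, one has $\dim F=\prod_i\dim N_i\cdot\prod_j\dim M_j$, and the hypothesis says precisely that $\dim\big(R(N_1,\dots,N_k,M_1,\dots,M_m)\otimes_{\cA(\ula,\umu)}\bC_0\big)=\prod_i\dim N_i\cdot\prod_j\dim M_j$.

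For part (a), I would first recall that \eqref{fiber surjects to fusion} gives a surjection $R(N_1,\dots,N_k)\otimes_{\cA(\ula)}\bC_0\twoheadrightarrow N_1\ast\dots\ast N_k$, so the source has dimension at least $\prod_i\dim N_i$. For the opposite inequality I would degenerate inside the big global module: consider the one-parameter family whose point, for a scalar $\varepsilon$, is the configuration with all the $N$-coordinates equal to $0$ and the $M$-coordinates at pairwise distinct values $\varepsilon d_1,\dots,\varepsilon d_m$ (nonzero for $\varepsilon\neq 0$). By the factorization property the fiber of $R(N_1,\dots,N_k,M_1,\dots,M_m)$ at such a point with $\varepsilon\neq 0$ is
\[
\big(R(N_1,\dots,N_k)\otimes_{\cA(\ula)}\bC_0\big)\otimes\bigotimes_{j=1}^m M_j(\varepsilon d_j),
\]
of dimension $\dim\big(R(N_1,\dots,N_k)\otimes_{\cA(\ula)}\bC_0\big)\cdot\prod_j\dim M_j$, while the fiber at $\varepsilon=0$ is the fiber at the origin, of dimension $\prod_i\dim N_i\cdot\prod_j\dim M_j$ by hypothesis. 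Upper semi-continuity forces the special ($\varepsilon=0$) fiber dimension to be at least the nearby one, which after cancelling $\prod_j\dim M_j$ yields $\dim\big(R(N_1,\dots,N_k)\otimes_{\cA(\ula)}\bC_0\big)\le\prod_i\dim N_i$. Together with the lower bound this makes the surjection \eqref{fiber surjects to fusion} an isomorphism, proving (a).

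For part (b), set $P=N_1\ast\dots\ast N_k$ and $G=P\ast M_1\ast\dots\ast M_m$. As above $\dim G=\dim P\cdot\prod_j\dim M_j=\prod_i\dim N_i\cdot\prod_j\dim M_j=\dim F$, so it suffices to produce a surjection $F\twoheadrightarrow G$. I would obtain it by merging the $N$-block of points. Using part (a) to identify $P$ with $R(N_1,\dots,N_k)\otimes_{\cA(\ula)}\bC_0$, I would construct a $\fg[t]$-equivariant surjection
\[
R(N_1,\dots,N_k,M_1,\dots,M_m)\otimes_{\cA(\ula,\umu)}\cA(\la,\umu)\twoheadrightarrow R(P,M_1,\dots,M_m)
\]
along the merging homomorphism $\cA(\ula,\umu)\twoheadrightarrow\cA(\la,\umu)$ that collapses the $N$-coordinates into one (here $\la=\la_1+\dots+\la_k$), by sending cyclic vector to cyclic vector. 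Applying $-\otimes_{\cA(\la,\umu)}\bC_0$ and using that the composite $\cA(\ula,\umu)\to\cA(\la,\umu)\to\bC_0$ is evaluation at the origin, the left side becomes $R(N_1,\dots,N_k,M_1,\dots,M_m)\otimes_{\cA(\ula,\umu)}\bC_0\simeq F$ (hypothesis), while the right side surjects onto $G$ by \eqref{fiber surjects to fusion}. Composing gives $F\twoheadrightarrow G$, which by the equality of dimensions is an isomorphism.

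The main obstacle is the construction of this merging surjection. One must check that the relations satisfied by the cyclic vector of the base-changed global module persist in $R(P,M_1,\dots,M_m)$, i.e. that collapsing the $N$-block of points in the global module construction realizes exactly the fusion product $P$ in that block. This is the compatibility of the functor $R(-)$ with the addition of coordinates, and it is precisely here that part (a) (which identifies the merged block with $N_1\ast\dots\ast N_k$) enters. Everything else is bookkeeping with dimensions, semi-continuity and factorization.
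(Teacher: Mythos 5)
Your part (a) is essentially the paper's own argument: both squeeze $\dim\big(R(N_1,\dots,N_k)\T_{\cA(\ula)}\bC_0\big)$ between the lower bound coming from \eqref{fiber surjects to fusion} and an upper bound obtained, via semi-continuity, by comparing the fiber at the origin (of dimension $\prod_i\dim N_i\cdot\prod_j\dim M_j$ by hypothesis) with the fiber at a point where the $N$-coordinates coincide and the $M$-coordinates are pairwise distinct. The only differences are cosmetic: you put the merged $N$-block at $0$ while the paper puts it at a nonzero $c_0$, and you assert that the fiber at such a point \emph{is} $\big(R(N_1,\dots,N_k)\T_{\cA(\ula)}\bC_0\big)\T\bigotimes_j M_j(\varepsilon d_j)$, whereas a priori one only has the evident surjection onto the cyclic product \eqref{fiber at partly equal coordinates}; since the surjection gives the inequality in the direction you need, this overstatement is harmless.

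Part (b) is where the gap is. Your argument rests entirely on the ``merging surjection'' $R(N_1,\dots,N_k,M_1,\dots,M_m)\T_{\cA(\ula,\umu)}\cA(\la,\umu)\twoheadrightarrow R(P,M_1,\dots,M_m)$ with $P=N_1\ast\dots\ast N_k$, and you do not construct it: you name it as ``the main obstacle'' and attribute it to a compatibility of $R(-)$ with addition of coordinates that is proved nowhere. It is not a formal consequence of part (a): part (a) identifies a single fiber of $R(N_1,\dots,N_k)$, whereas a cyclic-vector-to-cyclic-vector surjection requires an inclusion of annihilators in $\U(\fg[t])\T\cA(\la,\umu)$, which needs a separate argument (one could push it through using torsion-freeness of $R(P,M_1,\dots,M_m)$ over $\cA(\la,\umu)$ and the identification of fibers over a dense open locus, but none of this is in your text). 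The paper sidesteps the issue: by \cite[Proposition 2.11]{DF} the fiber of a global module at $0$ surjects onto $\gr$ of its fiber at any closed point; taking $\bc=(c_0,\dots,c_0,c_1,\dots,c_m)$ and using that the dimension count from part (a) upgrades \eqref{fiber at partly equal coordinates} to an isomorphism, that fiber is $P(c_0)\T M_1(c_1)\T\dots\T M_m(c_m)$, whose $\gr$ is by definition $(N_1\ast\dots\ast N_k)\ast M_1\ast\dots\ast M_m$. This yields the surjection $F\twoheadrightarrow G$ directly, and the dimension comparison finishes the proof. As written, the key step of your part (b) is missing; either import the statement from \cite{DF} as the paper does, or supply a genuine proof of the merging surjection.
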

	
	\begin{proof}
		We consider pairwise distinct $c_0, c_1, \hdots,  c_m \in \bC$. Let 
		$$\bc = (\underbrace{c_0, \hdots, c_0}_k, c_1, \hdots, c_m) \in \bC^{k + m}.$$ 
		Then clearly
		\begin{multline} \label{fiber at partly equal coordinates}
		R(N_1, \hdots, N_k, M_1, \hdots, M_m) \T_{\cA(\ula^\svee, \umu^\svee)} \bC_\bc \twoheadrightarrow \\
		\Big( R(N_1, \hdots, N_k) \T_{\cA(\ula^\svee)} \bC_{(c_0, \hdots, c_0)} \Big) \odot \Big( R(M_1, \hdots, M_m) \T_{\cA(\umu^\svee)} \bC_{(c_1, \hdots, c_m)} \Big)
		\\ \simeq \Big( R(N_1, \hdots, N_k) \T_{\cA(\ula^\svee)} \bC_0 \Big) (c_0) \odot \Big( M_1(c_1) \T \hdots \T M_m(c_m) \Big) 
		\\ \simeq \big( R(N_1, \hdots, N_k) \T_{\cA(\ula^\svee)} \bC_0 \big) (c_0) \T (M_1(c_1) \T \hdots \T M_m(c_m)).
		\end{multline}
		The last isomorphism holds because of \cite[Proposition 1.4]{FeLo}.
		
		Because of our assumption on the fiber at 0 of $R(N_1, \hdots, N_k, M_1, \hdots, M_m)$ we conclude
		that the fibers at all the points have the same dimension, and
		surjection~\eqref{fiber at partly equal coordinates} implies
		\begin{align*}
		\prod_{i = 1}^k \dim N_i \times \prod_{i = 1}^m \dim M_i &\geq
		\dim \big( R(N_1, \hdots, N_k) \T_{\cA(\ula^\svee)} \bC_0 \big) \times \prod_{i = 1}^m \dim M_i; \\
		\prod_{i = 1}^k \dim N_i &\geq \dim \big( R(N_1, \hdots, N_k) \T_{\cA(\ula^\svee)} \bC_0 \big),
		\end{align*}
		Comparing with \eqref{fiber surjects to fusion}, we obtain part \textup{(a)} of Proposition. We also conclude that 
		\eqref{fiber at partly equal coordinates} is an isomorphism.
		
		Now, as proved in \cite[Proposition 2.11]{DF}, there is a surjection
		\begin{multline*}
		N_1 \ast \hdots \ast N_k \ast M_1 \ast \hdots \ast M_m \simeq
		R(N_1, \hdots, N_k, M_1, \hdots, M_m) \T_{\cA(\ula^\svee, \umu^\svee)} \bC_0 \twoheadrightarrow
		\\ \gr \big( R(N_1, \hdots, N_k, M_1, \hdots, M_m) \T_{\cA(\ula^\svee, \umu^\svee)} \bC_\bc \big)
		\\ \simeq \gr \Big( \big( R(N_1, \hdots, N_k) \T_{\cA(\ula^\svee)} \bC_0 \big) (c_0) \T (M_1(c_1) \T \hdots \T M_m(c_m)) \Big)
		\\ \simeq (N_1 \ast \hdots \ast N_k) \ast M_1 \ast \hdots \ast M_m.
		\end{multline*}
		Comparing dimensions of both sides, we obtain part \textup{(b)} of the Proposition.
		
		To prove the remaining part, we first prove that
		\begin{equation} \label{R with base changed to one variable}
		\big( R(N_1, \hdots, N_k) \T_{\cA(\ula^\svee)} \bC_0 \big) [z] \simeq R(N_1, \hdots, N_k) \T_{\cA(\ula^\svee)} \cA',
		\end{equation}
		where the algebra $\cA'$ is the algebra of polynomials in one variable, obtained by gluing all the variables in the algebra $\cA(\ula^\svee)$:
		\begin{center}
			\begin{tikzcd}
			\bC[z_1, \hdots, z_{k}] \arrow[r, "z_i \mapsto z"] & \bC[z] \\
			\cA(\ula^\svee) \arrow[u, hook] \arrow[r] & \cA' \arrow[u, equal].
			\end{tikzcd}
		\end{center}
		
		Indeed, it follows from part \textup{(a)} of the Proposition that $R(N_1, \hdots, N_k,)$ is a free $\cA(\ula^\svee)$-module, and hence $R(N_1, \hdots, N_k) \T_{\cA(\ula^\svee)} \cA'$ is a free $\cA'$-module. Therefore, we obtain \eqref{R with base changed to one variable} as an isomorphism of vector spaces. 
		We note that the fibers of the left and right hand sides of \eqref{R with base changed to one variable} at a point $c \in \bC$ are isomorphic as  $\g[t]$-modules to $(N_1 \ast \hdots \ast N_k)(c)$.
		It follows that both sides of \eqref{R with base changed to one variable} are isomorphic as $\g[t]$-modules. In particular, there is a surjection 
		\begin{equation*}
		\big( R(N_1, \hdots, N_k) \T_{\cA(\ula^\svee)} \bC_0 \big) [z] \twoheadleftarrow R(N_1, \hdots, N_k).
		\end{equation*}
		
		Using it, we obtain:
		\begin{multline*}
		  R(R(N_1, \hdots, N_k) \T_{\cA(\ula^\svee)} \bC_0, M_1, \hdots, M_m )
                  \T_{\cA(\la_1^\svee + \cdots + \la_k^\svee, \umu^\svee)} \bC_0 \\
		\simeq \Big( \big( R(N_1, \hdots, N_k) \T_{\cA(\ula^\svee)} \bC_0 \big)[z] \odot R(M_1, \hdots, M_m ) \Big) \T_{\cA(\la_1^\svee + \cdots + \la_k^\svee, \umu^\svee)} \bC_0 \\
		\twoheadleftarrow \Big( R(N_1, \hdots, N_k) \odot R(M_1, \hdots, M_m ) \Big) \T_{\cA(\ula^\svee, \umu^\svee)} \bC_0 \\
		\simeq R(N_1, \hdots, N_k, M_1, \hdots, M_m) \T_{\cA(\ula^\svee, \umu^\svee)} \bC_0.
		\end{multline*}
		Comparing the dimensions of the leftmost and the rightmost terms, we obtain part c) of the Proposition.
	\end{proof}
	
	\begin{cor}[From the proof of Proposition \ref{fusion is associative}]
		Suppose $M_1, \hdots, M_k$ are cyclic graded $\fg[t]$ modules such that 
		\[
		R(M_1, \hdots , M_k) \T_{\cA(\ula^\svee)} \bC_0 \simeq M_1 \ast \hdots \ast M_k.
		\]
		Let $\bc = (\underbrace{c_1, \hdots, c_1}_{i_1}, \hdots, \underbrace{c_n, \hdots, c_n}_{i_n}) \in \bC^k$ be such that $c_i \neq c_j$ for $i \neq j$. Then
		\begin{multline*}
		R(M_1, \hdots , M_k) \T_{\cA(\ula^\svee)} \bC_\bc
		\\ \simeq \big( M_1 \ast \hdots \ast M_{i_1} \big) (c_1) \ast \hdots \ast \big( M_{i_1 + \hdots + i_{n - 1} + 1} \ast \hdots \ast M_k \big) (c_n).
		\end{multline*}
	\end{cor}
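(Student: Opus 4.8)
The plan is to rerun the computation from the proof of Proposition~\ref{fusion is associative}, which treated one ``fat'' group together with several singletons, now allowing $n$ groups of arbitrary sizes. First I would note that the hypothesis $R(M_1, \hdots, M_k) \T_{\cA(\ula)} \bC_0 \simeq M_1 \ast \hdots \ast M_k$ forces, by the semicontinuity theorem exactly as in that proof, the equality $\dim\big(R(M_1, \hdots, M_k) \T_{\cA(\ula)} \bC_\bc\big) = \prod_{i=1}^k \dim M_i$ for every $\bc$, in particular for the point of the statement. Writing $F_p$ for the fusion product of the $p$-th group $M_{i_1+\hdots+i_{p-1}+1}, \hdots, M_{i_1+\hdots+i_p}$, I would then apply Proposition~\ref{fusion is associative}(a) to each group in turn: since the fiber-at-zero hypothesis holds for the whole collection and the global module does not depend on the ordering of its factors, it holds for every subcollection, so $R(M_{(p)}) \T \bC_0 \simeq F_p$ for each $p$.

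Next I would reproduce the chain~\eqref{fiber at partly equal coordinates} for $n$ groups. Grouping the cyclic-product factors of $R(M_1, \hdots, M_k) = M_1[t] \odot \hdots \odot M_k[t]$ according to the partition yields a surjection
\[
R(M_1, \hdots, M_k) \T_{\cA(\ula)} \bC_\bc \twoheadrightarrow \bigodot_{p=1}^n \big( R(M_{(p)}) \T \bC_{(c_p, \hdots, c_p)} \big).
\]
Each right-hand factor is identified, through the same translation symmetry used inside~\eqref{fiber at partly equal coordinates}, with the shift $\big(R(M_{(p)}) \T \bC_0\big)(c_p) \simeq F_p(c_p)$. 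Because the $c_p$ are pairwise distinct, \cite[Proposition 1.4]{FeLo} converts the cyclic product into an honest tensor product, so the target becomes $\bigotimes_{p=1}^n F_p(c_p)$, which as an (ungraded) $\fg[t]$-module is exactly the fusion product $F_1(c_1) \ast \hdots \ast F_n(c_n)$ displayed in the statement.

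Finally I would upgrade the surjection to an isomorphism by comparing dimensions: the target has dimension $\prod_{p=1}^n \dim F_p = \prod_{i=1}^k \dim M_i$, which matches the fiber dimension computed in the first step. The only genuinely delicate point is the identification $R(M_{(p)}) \T \bC_{(c_p, \hdots, c_p)} \simeq F_p(c_p)$, i.e.\ that specializing at a diagonal point is the same as translating the fiber at the origin; but this is precisely the step already carried out inside the proof of Proposition~\ref{fusion is associative}, so it may be quoted rather than reproved.
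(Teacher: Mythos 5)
Your proposal is correct and follows exactly the route the paper intends: the corollary is extracted by rerunning the chain \eqref{fiber at partly equal coordinates} with $n$ groups in place of one group plus singletons, using part (a) of Proposition \ref{fusion is associative} (applied to each group after reordering) to identify each specialized factor with $F_p(c_p)$, and closing with the same dimension count. The only point worth flagging is notational, and you handle it correctly: the right-hand side $F_1(c_1)\ast\hdots\ast F_n(c_n)$ is the (filtered) tensor product $\bigotimes_p F_p(c_p)$, not its associated graded.
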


	\section{Key objects of the paper}
	
	Simple Lie algebras: 
	
	\medskip
	
	\noindent $\fg$ -- simple Lie algebra of rank $r$ with Cartan decomposition $\fg=\fn_+ \oplus\fh\oplus\fn_-$;\\
	\noindent $G^\sic$ (resp.\ $G^\ad$) -- simply connected (resp.\ adjoint) complex Lie group of $\fg$;\\
	\noindent  $\al^\svee_1,\dots,\al^\svee_r$ -- simple roots, $\om^\svee_1,\dots,\om^\svee_r$ -- fundamental weights;\\
	\noindent  $\al_1,\dots,\al_r$ -- simple coroots, $\om_1,\dots,\om_r$ -- fundamental coweights;\\
	\noindent $P=\bigoplus_{i=1}^r \bZ \om_i \supset \bigoplus_{i=1}^r \bZ_{\ge 0} \om_i = P_+$ coweight lattice and its dominant cone;\\
	\noindent $P^\vee=\bigoplus_{i=1}^r \bZ \omega^\svee_i \supset \bigoplus_{i=1}^r
	\bZ_{\ge 0} \omega^\svee_i = P^\vee_+$ weight lattice and its dominant cone;\\
	\noindent $\iota\colon P\to P^\vee$ -- the linear map from the coweight lattice to the weight
	lattice corresponding to the minimal invariant even bilinear form on the coroot lattice
	(``level 1'');\\
	\noindent for $\la=\sum_{i=1}^r m_i\om_i\in P_+$ we let $|\la|=\sum_{i=1}^r m_i$;\\
	\noindent $V_{\la^\svee}$ -- irreducible $\fg$-module with highest weight $\la^\svee\in P^\vee_+$.
	
	\medskip
	
	Current and affine algebras:
	
	\medskip
	\noindent $\fg[t]=\fg\T\bC[t]$ -- current algebra;\\
	\noindent $W_{\lambda^\svee}$, $\bW_{\lambda^\svee}$ ($\lambda^\svee\in P_+$) --
	local and global Weyl modules for $\fg[t]$;\\
	\noindent $S_\la=\times_{i=1}^r S_{m_i}$ -- symmetric group attached to $\la=\sum_{i = 1}^r m_i \om_i\in P_+$;\\
	\noindent $\odot$ -- cyclic product;\\
	\noindent $\fgh$ -- affine Kac-Moody Lie algebra;\\
	\noindent $W$ and $W^a$ -- finite Weyl group and extended affine Weyl group;\\
	\noindent $D_{\ell,\la}$ ($\ell\in\bZ_{\ge 1}$, $\la\in P_+$) -- level $\ell$ weight $\ell\iota(\la)$ affine Demazure module;\\
	\noindent $\ula=(\la_1,\dots,\la_k)$ -- collection of integral dominant coweights;\\
	\noindent $\bD(\ell,\ula) \simeq R(D(\ell,\la_1),\dots,D(\ell,\la_k))$ -- global Demazure module;\\
	\noindent $\bD_{\ell,\la} \simeq \bD(\ell,  \underbrace{\om_1, \hdots , \om_1}_{m_1}, \hdots , \underbrace{\om_r, \hdots , \om_r}_{m_r})$,
	where $\la=\sum_{i=1}^r m_i\om_i$;\\
	\noindent $\cA(\ula)$ -- highest weight algebra of $\bD(\ell,\ula)$;\\
	\noindent $\cA_\la \simeq \cA(\underbrace{\om_1, \hdots , \om_1}_{m_1}, \hdots , \underbrace{\om_r, \hdots , \om_r}_{m_r}) $ -- 
	highest weight algebra of $\bD_{\ell,\la}$;\\
	\noindent $M^\vee={\rm Hom}_{\cA(\ula)} (M,\cA(\ula))$ -- $\cA(\ula)$-dual of an $\cA(\ula)$-module $M$.
	
	\medskip
	
	Geometry:
	
	\medskip
	\noindent $\ol\Gr{}^\la\subset \bP(D(1,\la))$ -- spherical affine Schubert variety; \\
	\noindent $\bA^\lambda= \Spec(\cA_\la)$ -- colored configuration space on the affine line;\\
	\noindent $\bA^{(\ula)}= \Spec(\cA(\ula))$ -- closure of a diagonal stratification stratum
	in a colored configuration space on the affine line;\\
	\noindent $\Lambda_0^\vee$ -- basic level one integrable affine weight;\\
	\noindent $\Lambda_0^\vee,\Lambda_1^\vee,\dots,\Lambda_m^\vee$ -- all level one integrable affine weights;\\
	\noindent $\Gr:=\Gr_{G^\ad}=G^\ad(\BC(\!(t)\!))/G^\ad(\BC[\![t]\!])$ -- affine Grassmannian of
	$G^\ad$;\\
	\noindent $\Gr\simeq \sqcup_{i=0}^m \Gr(\Lambda_i^\vee)$ -- decomposition into irreducible components;\\
	\noindent $\Gr_{\bA^k}$ -- Beilinson-Drinfeld Grassmannian over $\bA^k$;\\
	\noindent $\Gr^{\ula}\subset \Gr_{\BA^k}$ -- Beilinson-Drinfeld spherical Schubert variety;\\
	\noindent $\Gr^{(\ula)}$ -- partially symmetrized Beilinson-Drinfeld spherical Schubert variety over $\bA^{(\ula)}$;\\
	\noindent $\uG(k)$ -- group scheme acting on the Beilinson-Drinfeld Grassmannian;\\  
	\noindent $\cL$ -- very ample determinant line bundle;\\
	\noindent $\cD(\ell,\ula)$ -- locally free sheaf on $\bA^{(\ula)}$, corresponding to the free $\cA(\ula)$-module $\bD(\ell,\ula)$.

\end{document}